\begin{document}

\title [Twisted Koecher-Maass series of the Ikeda type lift]{Twisted Koecher-Maass series of the Ikeda type lift\\ for the exceptional group of type $E_{7,3}$}  

%%%%%%%%%%%%%%%%%%%\medskip

\author{Hidenori Katsurada and Henry H. Kim}

\date{July 2 2022}
\keywords{Ikeda type lift, Koecher-Maass series}

\thanks{The first author is partially supported by JSPS KAKENHI Grant Number No. 21K03152. The second author is partially supported by NSERC grant \#482564.}
\subjclass[2020]{11F67, 11F55, 11E45}
\address{Hidenori Katsurada\\
Department of Mathematics,  Hokkaido University, Sapporo, Japan \\
and Muroran Insititute of Technology \\
27-1 Mizumoto, Muroran 050-8585, Japan
}
\email{hidenori@mmm.muroran-it.ac.jp}
\address{Henry H. Kim \\
Department of mathematics \\
 University of Toronto \\
Toronto, Ontario M5S 2E4, CANADA \\
and Korea Institute for Advanced Study, Seoul, KOREA}
\email{henrykim@math.toronto.edu}

\maketitle

%Greek letters

\newcommand{\alp}{\alpha}
\newcommand{\bet}{\beta}
\newcommand{\gam}{\gamma}
\newcommand{\del}{\delta}
\newcommand{\eps}{\epsilon}
\newcommand{\zet}{\zeta}
\newcommand{\tht}{\theta}
\newcommand{\iot}{\iota}
\newcommand{\kap}{\kappa}
\newcommand{\lam}{\lambda}
\newcommand{\sig}{\sigma}
\newcommand{\ups}{\upsilon}
\newcommand{\ome}{\omega}
\newcommand{\vep}{\varepsilon}
\newcommand{\vth}{\vartheta}
\newcommand{\vpi}{\varpi}
\newcommand{\vrh}{\varrho}
\newcommand{\vsi}{\varsigma}
\newcommand{\vph}{\varphi}
\newcommand{\Gam}{\Gamma}
\newcommand{\Del}{\Delta}
\newcommand{\Tht}{\Theta}
\newcommand{\Lam}{\Lambda}
\newcommand{\Sig}{\Sigma}
\newcommand{\Ups}{\Upsilon}
\newcommand{\Ome}{\Omega}

%fraktur letters

\newcommand{\frka}{{\mathfrak a}}    \newcommand{\frkA}{{\mathfrak A}}
\newcommand{\frkb}{{\mathfrak b}}    \newcommand{\frkB}{{\mathfrak B}}
\newcommand{\frkc}{{\mathfrak c}}    \newcommand{\frkC}{{\mathfrak C}}
\newcommand{\frkd}{{\mathfrak d}}    \newcommand{\frkD}{{\mathfrak D}}
\newcommand{\frke}{{\mathfrak e}}    \newcommand{\frkE}{{\mathfrak E}}
\newcommand{\frkf}{{\mathfrak f}}    \newcommand{\frkF}{{\mathfrak F}}
\newcommand{\frkg}{{\mathfrak g}}    \newcommand{\frkG}{{\mathfrak G}}
\newcommand{\frkh}{{\mathfrak h}}    \newcommand{\frkH}{{\mathfrak H}}
\newcommand{\frki}{{\mathfrak i}}    \newcommand{\frkI}{{\mathfrak I}}
\newcommand{\frkj}{{\mathfrak j}}    \newcommand{\frkJ}{{\mathfrak J}}
\newcommand{\frkk}{{\mathfrak k}}    \newcommand{\frkK}{{\mathfrak K}}
\newcommand{\frkl}{{\mathfrak l}}    \newcommand{\frkL}{{\mathfrak L}}
\newcommand{\frkm}{{\mathfrak m}}    \newcommand{\frkM}{{\mathfrak M}}
\newcommand{\frkn}{{\mathfrak n}}    \newcommand{\frkN}{{\mathfrak N}}
\newcommand{\frko}{{\mathfrak o}}    \newcommand{\frkO}{{\mathfrak O}}
\newcommand{\frkp}{{\mathfrak p}}    \newcommand{\frkP}{{\mathfrak P}}
\newcommand{\frkq}{{\mathfrak q}}    \newcommand{\frkQ}{{\mathfrak Q}}
\newcommand{\frkr}{{\mathfrak r}}    \newcommand{\frkR}{{\mathfrak R}}
\newcommand{\frks}{{\mathfrak s}}    \newcommand{\frkS}{{\mathfrak S}}
\newcommand{\frkt}{{\mathfrak t}}    \newcommand{\frkT}{{\mathfrak T}}
\newcommand{\frku}{{\mathfrak u}}    \newcommand{\frkU}{{\mathfrak U}}
\newcommand{\frkv}{{\mathfrak v}}    \newcommand{\frkV}{{\mathfrak V}}
\newcommand{\frkw}{{\mathfrak w}}    \newcommand{\frkW}{{\mathfrak W}}
\newcommand{\frkx}{{\mathfrak x}}    \newcommand{\frkX}{{\mathfrak X}}
\newcommand{\frky}{{\mathfrak y}}    \newcommand{\frkY}{{\mathfrak Y}}
\newcommand{\frkz}{{\mathfrak z}}    \newcommand{\frkZ}{{\mathfrak Z}}

%math boldface latters

\newcommand{\bfa}{{\mathbf a}}    \newcommand{\bfA}{{\mathbf A}}
\newcommand{\bfb}{{\mathbf b}}    \newcommand{\bfB}{{\mathbf B}}
\newcommand{\bfc}{{\mathbf c}}    \newcommand{\bfC}{{\mathbf C}}
\newcommand{\bfd}{{\mathbf d}}    \newcommand{\bfD}{{\mathbf D}}
\newcommand{\bfe}{{\mathbf e}}    \newcommand{\bfE}{{\mathbf E}}
\newcommand{\bff}{{\mathbf f}}    \newcommand{\bfF}{{\mathbf F}}
\newcommand{\bfg}{{\mathbf g}}    \newcommand{\bfG}{{\mathbf G}}
\newcommand{\bfh}{{\mathbf h}}    \newcommand{\bfH}{{\mathbf H}}
\newcommand{\bfi}{{\mathbf i}}    \newcommand{\bfI}{{\mathbf I}}
\newcommand{\bfj}{{\mathbf j}}    \newcommand{\bfJ}{{\mathbf J}}
\newcommand{\bfk}{{\mathbf k}}    \newcommand{\bfK}{{\mathbf K}}
\newcommand{\bfl}{{\mathbf l}}    \newcommand{\bfL}{{\mathbf L}}
\newcommand{\bfm}{{\mathbf m}}    \newcommand{\bfM}{{\mathbf M}}
\newcommand{\bfn}{{\mathbf n}}    \newcommand{\bfN}{{\mathbf N}}
\newcommand{\bfo}{{\mathbf o}}    \newcommand{\bfO}{{\mathbf O}}
\newcommand{\bfp}{{\mathbf p}}    \newcommand{\bfP}{{\mathbf P}}
\newcommand{\bfq}{{\mathbf q}}    \newcommand{\bfQ}{{\mathbf Q}}
\newcommand{\bfr}{{\mathbf r}}    \newcommand{\bfR}{{\mathbf R}}
\newcommand{\bfs}{{\mathbf s}}    \newcommand{\bfS}{{\mathbf S}}
\newcommand{\bft}{{\mathbf t}}    \newcommand{\bfT}{{\mathbf T}}
\newcommand{\bfu}{{\mathbf u}}    \newcommand{\bfU}{{\mathbf U}}
\newcommand{\bfv}{{\mathbf v}}    \newcommand{\bfV}{{\mathbf V}}
\newcommand{\bfw}{{\mathbf w}}    \newcommand{\bfW}{{\mathbf W}}
\newcommand{\bfx}{{\mathbf x}}    \newcommand{\bfX}{{\mathbf X}}
\newcommand{\bfy}{{\mathbf y}}    \newcommand{\bfY}{{\mathbf Y}}
\newcommand{\bfz}{{\mathbf z}}    \newcommand{\bfZ}{{\mathbf Z}}

%caligraphic letters

\newcommand{\cala}{{\mathcal A}}
\newcommand{\calb}{{\mathcal B}}
\newcommand{\calc}{{\mathcal C}}
\newcommand{\cald}{{\mathcal D}}
\newcommand{\cale}{{\mathcal E}}
\newcommand{\calf}{{\mathcal F}}
\newcommand{\calg}{{\mathcal G}}
\newcommand{\calh}{{\mathcal H}}
\newcommand{\cali}{{\mathcal I}}
\newcommand{\calj}{{\mathcal J}}
\newcommand{\calk}{{\mathcal K}}
\newcommand{\call}{{\mathcal L}}
\newcommand{\calm}{{\mathcal M}}
\newcommand{\caln}{{\mathcal N}}
\newcommand{\calo}{{\mathcal O}}
\newcommand{\calp}{{\mathcal P}}
\newcommand{\calq}{{\mathcal Q}}
\newcommand{\calr}{{\mathcal R}}
\newcommand{\cals}{{\mathcal S}}
\newcommand{\calt}{{\mathcal T}}
\newcommand{\calu}{{\mathcal U}}
\newcommand{\calv}{{\mathcal V}}
\newcommand{\calw}{{\mathcal W}}
\newcommand{\calx}{{\mathcal X}}
\newcommand{\caly}{{\mathcal Y}}
\newcommand{\calz}{{\mathcal Z}}

%math script

\newcommand{\scra}{{\mathscr A}}
\newcommand{\scrb}{{\mathscr B}}
\newcommand{\scrc}{{\mathscr C}}
\newcommand{\scrd}{{\mathscr D}}
\newcommand{\scre}{{\mathscr E}}
\newcommand{\scrf}{{\mathscr F}}
\newcommand{\scrg}{{\mathscr G}}
\newcommand{\scrh}{{\mathscr H}}
\newcommand{\scri}{{\mathscr I}}
\newcommand{\scrj}{{\mathscr J}}
\newcommand{\scrk}{{\mathscr K}}
\newcommand{\scrl}{{\mathscr L}}
\newcommand{\scrm}{{\mathscr M}}
\newcommand{\scrn}{{\mathscr N}}
\newcommand{\scro}{{\mathscr O}}
\newcommand{\scrp}{{\mathscr P}}
\newcommand{\scrq}{{\mathscr Q}}
\newcommand{\scrr}{{\mathscr R}}
\newcommand{\scrs}{{\mathscr S}}
\newcommand{\scrt}{{\mathscr T}}
\newcommand{\scru}{{\mathscr U}}
\newcommand{\scrv}{{\mathscr V}}
\newcommand{\scrw}{{\mathscr W}}
\newcommand{\scrx}{{\mathscr X}}
\newcommand{\scry}{{\mathscr Y}}
\newcommand{\scrz}{{\mathscr Z}}

%math Bbb

\newcommand{\AAA}{{\mathbb A}} %not \AA
\newcommand{\BB}{{\mathbb B}}
\newcommand{\CC}{{\mathbb C}}
\newcommand{\DD}{{\mathbb D}}
\newcommand{\EE}{{\mathbb E}}
\newcommand{\FF}{{\mathbb F}}
\newcommand{\GG}{{\mathbb G}}
\newcommand{\HH}{{\mathbb H}}
\newcommand{\II}{{\mathbb I}}
\newcommand{\JJ}{{\mathbb J}}
\newcommand{\KK}{{\mathbb K}}
\newcommand{\LL}{{\mathbb L}}
\newcommand{\MM}{{\mathbb M}}
\newcommand{\NN}{{\mathbb N}}
\newcommand{\OO}{{\mathbb O}}
\newcommand{\PP}{{\mathbb P}}
\newcommand{\QQ}{{\mathbb Q}}
\newcommand{\RR}{{\mathbb R}}
\newcommand{\SSS}{{\mathbb S}} %not \SS
\newcommand{\TT}{{\mathbb T}}
\newcommand{\UU}{{\mathbb U}}
\newcommand{\VV}{{\mathbb V}}
\newcommand{\WW}{{\mathbb W}}
\newcommand{\XX}{{\mathbb X}}
\newcommand{\YY}{{\mathbb Y}}
\newcommand{\ZZ}{{\mathbb Z}}

%typewriter

\newcommand{\tta}{\hbox{\tt a}}    \newcommand{\ttA}{\hbox{\tt A}}
\newcommand{\ttb}{\hbox{\tt b}}    \newcommand{\ttB}{\hbox{\tt B}}
\newcommand{\ttc}{\hbox{\tt c}}    \newcommand{\ttC}{\hbox{\tt C}}
\newcommand{\ttd}{\hbox{\tt d}}    \newcommand{\ttD}{\hbox{\tt D}}
\newcommand{\tte}{\hbox{\tt e}}    \newcommand{\ttE}{\hbox{\tt E}}
\newcommand{\ttf}{\hbox{\tt f}}    \newcommand{\ttF}{\hbox{\tt F}}
\newcommand{\ttg}{\hbox{\tt g}}    \newcommand{\ttG}{\hbox{\tt G}}
\newcommand{\tth}{\hbox{\tt h}}    \newcommand{\ttH}{\hbox{\tt H}}
\newcommand{\tti}{\hbox{\tt i}}    \newcommand{\ttI}{\hbox{\tt I}}
\newcommand{\ttj}{\hbox{\tt j}}    \newcommand{\ttJ}{\hbox{\tt J}}
\newcommand{\ttk}{\hbox{\tt k}}    \newcommand{\ttK}{\hbox{\tt K}}
\newcommand{\ttl}{\hbox{\tt l}}    \newcommand{\ttL}{\hbox{\tt L}}
\newcommand{\ttm}{\hbox{\tt m}}    \newcommand{\ttM}{\hbox{\tt M}}
\newcommand{\ttn}{\hbox{\tt n}}    \newcommand{\ttN}{\hbox{\tt N}}
\newcommand{\tto}{\hbox{\tt o}}    \newcommand{\ttO}{\hbox{\tt O}}
\newcommand{\ttp}{\hbox{\tt p}}    \newcommand{\ttP}{\hbox{\tt P}}
\newcommand{\ttq}{\hbox{\tt q}}    \newcommand{\ttQ}{\hbox{\tt Q}}
\newcommand{\ttr}{\hbox{\tt r}}    \newcommand{\ttR}{\hbox{\tt R}}
\newcommand{\tts}{\hbox{\tt s}}    \newcommand{\ttS}{\hbox{\tt S}}
\newcommand{\ttt}{\hbox{\tt t}}    \newcommand{\ttT}{\hbox{\tt T}}
\newcommand{\ttu}{\hbox{\tt u}}    \newcommand{\ttU}{\hbox{\tt U}}
\newcommand{\ttv}{\hbox{\tt v}}    \newcommand{\ttV}{\hbox{\tt V}}
\newcommand{\ttw}{\hbox{\tt w}}    \newcommand{\ttW}{\hbox{\tt W}}
\newcommand{\ttx}{\hbox{\tt x}}    \newcommand{\ttX}{\hbox{\tt X}}
\newcommand{\tty}{\hbox{\tt y}}    \newcommand{\ttY}{\hbox{\tt Y}}
\newcommand{\ttz}{\hbox{\tt z}}    \newcommand{\ttZ}{\hbox{\tt Z}}

\makeatletter 
\newcommand{\LEQQ}{\mathrel{\mathpalette\gl@align<}}
\newcommand{\GEQQ}{\mathrel{\mathpalette\gl@align>}}
\newcommand{\gl@align}[2]{\lower.6ex\vbox{\baselineskip\z@skip\lineskip\z@ 
\ialign{$\m@th#1\hfil##\hfil$\crcr#2\crcr=\crcr}}}
\makeatother

\newcommand{\ua}{\underline{a}}
\newcommand{\GK}{\mathrm{GK}}
\newcommand{\ord}{\mathrm{ord}}
\def\mattwo(#1;#2;#3;#4){\left(\begin{matrix}
                               #1 & #2 \\
                               #3  & #4
                                      \end{matrix}\right)}
\theoremstyle{plain}
\newtheorem{theorem}{Theorem}[section]
\newtheorem{lemma}[theorem]{Lemma}
\newtheorem{proposition}[theorem]{Proposition}
\newtheorem{definition}[theorem]{Definition}
\newtheorem{conjecture}{Conjecture}
\newtheorem{comment}{Comment}[section]
\theoremstyle{remark}
\newtheorem{remark}[theorem]{\bf {Remark}}
\newtheorem{corollary}[theorem]{\bf {Corollary}}
\newtheorem{formula}{\bf {Formula}}[section]
\newtheorem{question}{\bf {Question}}[section]
\numberwithin{equation}{section}

%\tableofcontents

\begin{abstract}
We compute the twisted Koecher-Maass series of the first and second kind of the Ikeda type lift for the exceptional group of type $E_{7,3}$.
As an application, we obtain their rationality result.
\end{abstract}

\section{Introduction}

Let $\frkJ_\QQ$ be the exceptional Jordan algebra consisting of 
$3 \times 3$ matrices with entries in the Cayley numbers, and $\calm'$ the group scheme over $\ZZ$ of type $E_{6,2}$. (See Section \ref{excep} for the definitions.)  Let $\frkT$ be the exceptional domain in $\CC^{27}$. Let ${\bf G}$ be a connected reductive group of type $GE_{7,3}$.  For a cusp form 
$$F(Z)=\sum_{T \in \frkJ(\ZZ)_{>0}}a_F(T)\exp(2\pi \sqrt{-1}(T,Z))$$ of weight $k$ ($k$ even) with respect to ${\bf G}(\ZZ)$ and a Dirichlet character $\chi$, 
we define the twisted Koecher-Maass series $K^{(2)}(s,F,\chi)$ of the second kind as 
$$K^{(2)}(s,F,\chi)=\sum_{T\in {\frak J}(\Bbb Z)_{>0}/\calm'(\ZZ)} \frac {\chi(\det T)a_F(T)}{\epsilon(T) \det(T)^s},
$$
where for $T \in \frkJ(\ZZ)_{>0}$, $\epsilon(T)=\#(\calu_T(\ZZ))$, and $\calu_T$ denotes the group scheme of type $F_4$. (See Section \ref{F_4}.) 
For a positive integer $N$ let
$\phi_N:\calm'(\ZZ) \longrightarrow \calm'(\ZZ/N\ZZ)$ be the homomorphism induced by the natural surjection $\pi_N:\ZZ \longrightarrow \ZZ/N\ZZ$, and put $\calm'(N;\ZZ)=\mathrm{Ker}(\phi_N)$.
We define the twisted Koecher-Maass series $K^{(1)}(s,F,\chi)$ of the first kind as
$$K^{(1)}(s,F,\chi)=\sum_{T \in {\frak J}(\Bbb Z)_{>0}/\calm'(N;\ZZ)} \frac {\chi(\mathrm{Tr}(T)) a_F(T)}{\epsilon_N(T) \det(T)^s},
$$
where $\epsilon_N(T)=\#(\calm'(N;\ZZ) \cap \calu_T(\ZZ))$. 

For a cuspidal Hecke eigenform $f \in S_{2k-8}(SL_2(\ZZ))$ ($2k\geq 20$), let $F_f$ be the Ikeda type lift which is a cuspidal Hecke eigenform of weight $2k$ with respect to ${\bf G}(\ZZ)$ constructed by Kim and Yamauchi (\cite{K-Y}). In this paper we give explicit formulas of $K^{(1)}(s,F_f,\chi)$ and $K^{(2)}(s,F_f,\chi)$.

\begin{theorem} \label{th.explicit-KM-2} 
Let $\displaystyle c={5!7!11! \over (2\pi)^{28}}$ and $L(s,\pi_f,\chi)$ be the  $L$-function of the cuspidal automorphic representation $\pi_f$  attached to $f$ twisted by $\chi$. Then
\begin{align*}
K^{(2)}(s,F_f,\chi) =c \zeta(2)\zeta(6)\zeta(8)\zeta(12)\times \prod_{i=1}^3 L(s-k-9/2+4i-3,\pi_f,\chi).
\end{align*}
\end{theorem}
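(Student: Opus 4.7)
The plan is to substitute the Kim-Yamauchi explicit Fourier-coefficient formula for the Ikeda type lift $F_f$ into the definition of $K^{(2)}(s,F_f,\chi)$ and then rearrange the resulting sum as an Euler product. From the construction in \cite{K-Y}, the Fourier coefficient $a_{F_f}(T)$ at $T\in\frkJ(\ZZ)_{>0}$ factors as a global piece involving a Fourier coefficient of $f$ together with an explicit power of $\det T$, multiplied by a product over primes $p$ of local factors $\tilde F_p(T,\alpha_p)$, where the $\alpha_p$ are the Satake parameters of $f$ and $\tilde F_p$ depends only on the $p$-adic orbit of $T$ under $\calm'(\ZZ_p)$. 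The explicit power of $\det T$ in the global part is what produces the shifts $s-k-9/2+4i-3$ ($i=1,2,3$) visible in the three $L$-factors of Theorem \ref{th.explicit-KM-2}.

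Next, since $\epsilon(T)=\#\calu_T(\ZZ)$ is the order of the stabilizer of $T$, the sum
\[
\sum_{T\in\frkJ(\ZZ)_{>0}/\calm'(\ZZ)}\frac{\chi(\det T)\,a_{F_f}(T)}{\epsilon(T)\det(T)^s}
\]
is a mass-weighted sum over the global genus of positive definite integral elements of $\frkJ$. The adelic mass formula for the group of type $F_4$ acting on $\frkJ$ lets one factor this sum as a product of local generating functions with an archimedean constant. The factor $c=5!7!11!/(2\pi)^{28}$ arises from the weight-$2k$ Whittaker normalization at infinity, while $\zeta(2)\zeta(6)\zeta(8)\zeta(12)$ appears as the mass of the principal class, the exponents $2,6,8,12$ being related to the degrees of invariants associated with the exceptional groups involved.

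The heart of the proof is the local identity at each finite prime $p$: one must establish that the local generating function
\[
\sum_{T_p}\frac{\chi_p(\det T_p)\,\tilde F_p(T_p,\alpha_p)}{\nu_p(T_p)\,|\det T_p|_p^{\,s-k-9/2}},
\]
summed over local $\calm'(\ZZ_p)$-orbits of positive definite $T_p\in\frkJ(\ZZ_p)$ with local mass weight $\nu_p$, equals $\prod_{i=1}^3 L_p(s-k-9/2+4i-3,\pi_f,\chi)$. This is the exceptional-Jordan analogue of the identity used in the symplectic Ikeda lift setting, and I expect to prove it by induction on the exceptional Gross-Keating type invariants of $T_p$, combined with the recursion satisfied by $\tilde F_p$ under passage from $T_p$ to its reductions of lower rank. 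Establishing this local identity is the main obstacle; once it is in hand, multiplying across all primes together with the archimedean factor $c$ yields precisely the right-hand side of Theorem \ref{th.explicit-KM-2}.
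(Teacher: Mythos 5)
Your global reduction matches the paper's: the paper likewise uses the factorization of $a_{F_f}(T)$ into $\det(T)^{(2k-9)/2}\prod_p \widetilde f_T^p(\alpha_p)$, the mass formula $\mathrm{Mass}(T)=c\,(\det T)^9/\prod_p\beta_p(T)$ for the $F_4$-stabilizers, and the local-global principle for $\frkJ(d,\ZZ)_{>0}$ to rewrite $K^{(2)}(s,F_f,\chi)$ first as $c\sum_d C(d,f)\chi(d)d^{-s+k+9/2}$ and then as the Euler product $c\prod_p H_p(\alpha_p,\chi(p)p^{-s+k+9/2})$ (Theorems 6.1 and 6.2 in the paper). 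Up to that point your outline is sound, modulo a minor misattribution: the constant $c$ comes from the archimedean factor in the mass formula itself (a Tamagawa-number computation for $F_4$), not from a Whittaker normalization, and the zeta factors arise as $\prod_p\{(1-p^{-2})(1-p^{-6})(1-p^{-8})(1-p^{-12})\}^{-1}$ inside the local factors $H_p$.

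The genuine gap is the local identity, which you correctly identify as the heart of the matter but do not prove. You propose an induction on ``exceptional Gross--Keating type invariants'' and a recursion for the local factor under rank reduction; no such recursion is established, and it is not the route the paper takes. The paper instead evaluates $H_p(X,t)$ in closed form (Theorem 6.3) by combining two explicit ingredients from \cite{K-K-Y}: an eight-term partial-fraction expansion of $\widetilde f_T^p(X)$ for $T\sim p^{m_1}\bot p^{m_1+m_2}\bot p^{m_1+m_3}$, and a closed formula for the generating function $\sum_{m_1,m_2,m_3} t^{3m_1+m_2+m_3}X_i^{m_1}Y_i^{m_2}Z_i^{m_3}/\beta_p(\cdot)$ over local orbits weighted by local densities. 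Summing the eight contributions produces a rational function $K_p(X,t)$ which is shown to be a polynomial of degree at most $3$ in $t$ taking the value $1$ at four points, hence identically $1$; this yields exactly $\prod_{i=1}^3(1-p^{-4i+3}X^{\pm 1}t)^{-1}$ and hence the three shifted $L$-factors. Without either this explicit computation or a worked-out substitute for your proposed induction, the theorem is not proved: everything before the local identity is bookkeeping, and the identity itself is where all the specific structure of the $E_{7,3}$ Siegel series enters.
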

\begin{theorem} \label{th.explicit-KM-1}
Let $\chi$ be a primitive Dirichlet character mod $N$. Suppose that $\chi$ is not a quadratic character. Let $l=\mathrm{GCD}(3,\phi(N))$ with $\phi$ is the Euler phi-function, and let $u_0$ be a primitive $l$-th root of unity mod $N$.
\begin{itemize}
\item[(1)] Suppose that $\chi(u_0) \not=1$.  Then $K^{(1)}(s,F_f,\chi)=0$.
\item[(2)] Suppose that $\chi(u_0) =1$ and fix a character $\widetilde \chi$ such that $\chi=\widetilde \chi^3$ (cf. Lemma \ref{lem.existence-of-third-root-of-character}).  Then 
\begin{align*}
K^{(1)}(s,F_f,\chi) =c \zeta(2)\zeta(6)\zeta(8)\zeta(12)d_{N} \sum_{\eta \in \cald_{N}}J(\overline{\widetilde \chi \eta},\overline{\widetilde \chi \eta},\overline{\widetilde \chi \eta}) \prod_{i=1}^3  L(s-k-9/2+4i-3,\pi_f,\overline{\widetilde \chi \eta}),
\end{align*}
where $d_N=N^{64}\prod_{p|N} (1-p^{-2})(1-p^{-6})(1-p^{-8})(1-p^{-12})$, 
$$\cald_{N}=\{ \eta \ | \ \eta \text{ is a Dirichlet character } mod \ N \text{ such that } \eta^l=1 \},
$$
and $J(\xi,\xi,\xi)$ is the generalized Jacobi sum (cf. Section 4).
In particular, if $N$ is odd, we have
\begin{align*}
K^{(1)}(s,F_f,\chi) =c \zeta(2)\zeta(6)\zeta(8)\zeta(12)d_{N}\sum_{\eta \in \cald_{N}}{W(\overline{\widetilde \chi \eta})^3 \over W(\bar \chi)}\prod_{i=1}^3  L(s-k-9/2+4i-3,\pi_f,\overline{\widetilde \chi \eta}),
\end{align*}
where $W(\xi)$ is the Gauss sum of the Dirichlet character $\xi$.
\end{itemize}
\end{theorem}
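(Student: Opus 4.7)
The plan is to reduce $K^{(1)}(s,F_f,\chi)$ to a finite linear combination of the second-kind series $K^{(2)}(s,F_f,\xi)$, and then invoke Theorem~\ref{th.explicit-KM-2}. First I would refine the orbit decomposition: using strong approximation for $\calm'$ (which makes $\calm'(\ZZ) \to \calm'(\ZZ/N\ZZ)$ surjective), each $\calm'(\ZZ)$-orbit $[T]$ in $\frkJ(\ZZ)_{>0}$ breaks into $\calm'(N;\ZZ)$-orbits in bijection with the $\calm'(\ZZ/N\ZZ)$-orbit of $\overline T := T \bmod N$. A standard orbit-stabilizer manipulation converts
$$\sum_{T' \in [T]/\calm'(N;\ZZ)} \frac{\chi(\mathrm{Tr}(T'))}{\epsilon_N(T')} \;=\; \frac{1}{\epsilon(T)} \sum_{g \in \calm'(\ZZ/N\ZZ)} \chi(\mathrm{Tr}(g\cdot\overline T)),$$
and after summing over $[T] \in \frkJ(\ZZ)_{>0}/\calm'(\ZZ)$ one gets
$$K^{(1)}(s,F_f,\chi) = \sum_{T \in \frkJ(\ZZ)_{>0}/\calm'(\ZZ)} \frac{a_{F_f}(T)}{\epsilon(T)\det(T)^s}\, \widetilde{C}_\chi(\overline T),$$
where $\widetilde{C}_\chi(\overline T)$ is (proportional to) the character sum $\sum_{T' \in \calm'(\ZZ/N\ZZ)\cdot \overline T} \chi(\mathrm{Tr}(T'))$ over the finite orbit.

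Next I analyze the orbital sum. Since $\calm'$ preserves $\det$ and (over algebraically closed fields) acts transitively on each level set of $\det$ in the non-degenerate locus, an integral version of the classification (using the simple-connectedness of $E_{6,2}$) identifies the $\calm'(\ZZ/N\ZZ)$-orbit of $\overline T$ with $\{T' \in \frkJ(\ZZ/N\ZZ) : \det T' \equiv \det T \bmod N\}$ whenever $(\det T,N)=1$. Thus $\widetilde C_\chi$ reduces, up to explicit normalization, to
$$C_\chi(d) := \sum_{T' \in \frkJ(\ZZ/N\ZZ),\ \det T' = d}\chi(\mathrm{Tr}(T')), \qquad d := \det T \bmod N.$$
Substituting $T' = \lambda T''$ with $\lambda \in (\ZZ/N\ZZ)^\times$ yields $\det(\lambda T'') = \lambda^3\det T''$ and $\mathrm{Tr}(\lambda T'') = \lambda\mathrm{Tr}(T'')$, hence $C_\chi(\lambda^3 d) = \chi(\lambda)C_\chi(d)$. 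Taking $\lambda = u_0$ (so $\lambda^3 = 1$), if $\chi(u_0)\neq 1$ this forces $C_\chi(d) = 0$ for all $d$, which establishes part~(1).

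When $\chi(u_0) = 1$, Lemma \ref{lem.existence-of-third-root-of-character} furnishes $\widetilde\chi$ with $\chi = \widetilde\chi^3$, and the identity $C_\chi(\lambda^3 d) = \widetilde\chi(\lambda^3)C_\chi(d)$ shows that $\overline{\widetilde\chi(d)}C_\chi(d)$ descends to a function on the quotient $(\ZZ/N\ZZ)^\times/(\ZZ/N\ZZ)^{\times 3}$, whose dual is precisely $\cald_N$. Fourier expanding on this finite abelian group gives
$$C_\chi(d) \;=\; d_N \sum_{\eta \in \cald_N}\overline{(\widetilde\chi\eta)(d)}\,J(\overline{\widetilde\chi\eta},\overline{\widetilde\chi\eta},\overline{\widetilde\chi\eta}),$$
where the Fourier coefficients are, by construction, the generalized Jacobi sums of Section~4 (the expansion of $\chi\circ\mathrm{Tr}$ along the cubic level sets of $\det$ is exactly such a Jacobi sum), and the normalization $d_N$ absorbs the orbit-stabilizer factor $|\calm'(\ZZ/N\ZZ)|/|\calu_T(\ZZ/N\ZZ)|$ together with the Jacobi-sum normalization, giving the stated product $N^{64}\prod_{p|N}(1-p^{-2})(1-p^{-6})(1-p^{-8})(1-p^{-12})$. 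Plugging this back and interchanging summations, the inner sum over $[T] \in \frkJ(\ZZ)_{>0}/\calm'(\ZZ)$ with weight $\overline{(\widetilde\chi\eta)(\det T)}a_{F_f}(T)/(\epsilon(T)\det(T)^s)$ is exactly $K^{(2)}(s,F_f,\overline{\widetilde\chi\eta})$, to which Theorem~\ref{th.explicit-KM-2} applies. The odd-$N$ specialization is then the classical identity $W(\xi)^3 = W(\xi^3)J(\xi,\xi,\xi)$ applied to the primitive character $\xi = \overline{\widetilde\chi\eta}$ of odd conductor.

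The main obstacle lies in step two: first, establishing integrally (and not merely over residue fields) that the $\calm'(\ZZ/N\ZZ)$-orbit on $\frkJ(\ZZ/N\ZZ)^\times$ is an entire level set of $\det$, which requires a careful analysis at each prime $p\mid N$ and uses the simple-connectedness of $E_{6,2}$; and second, matching the Fourier coefficient of $\overline{\widetilde\chi}\cdot C_\chi$ with the generalized Jacobi sum of Section~4 with the precise normalization $d_N$. Both reduce via the Chinese Remainder Theorem to local calculations at each prime-power level.
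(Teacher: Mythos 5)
Your overall skeleton is the same as the paper's: reduce $K^{(1)}$ to the finite character sum $h(T,\chi)=\sum_{g\in\calm'(\ZZ)/\calm'(N;\ZZ)}\chi(\mathrm{Tr}(g\cdot T))$ (Proposition \ref{rewritten-twisted_Koecher-Maass}), identify the $\calm'(\ZZ/p^m\ZZ)$-orbit of $T$ with the level set of $\det$ and count the fibers (Lemma \ref{lem.number-of-fiber}), kill part (1) by the scaling $T'\mapsto u_0T'$, expand over $\cald_N$, and reassemble into $\sum_\eta J(\cdot)\,K^{(2)}(s,F_f,\widetilde\chi\eta)$ followed by Theorem \ref{th.explicit-KM-2}. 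Your part (1) argument is essentially the paper's. But there are two genuine gaps in part (2). First, you silently restrict to $(\det T,N)=1$; the series runs over all $T\in\frkJ(\ZZ)_{>0}$, and you must show that $h(T,\chi)=0$ whenever $\det T$ shares a prime with $N$. The paper does this with a separate scaling argument using an element $\xi_0\equiv 1 \bmod p^{m-1}$ with $\chi(\xi_0)^2\neq 1$, and this is exactly where the hypothesis that $\chi$ is primitive and \emph{not quadratic} enters. Your proposal never uses that hypothesis, which is a reliable sign that these terms have not been dealt with.

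Second, the identification of the Fourier coefficient of $d\mapsto\overline{\widetilde\chi}(d)C_\chi(d)$ with $d_N\,J(\overline{\widetilde\chi\eta},\overline{\widetilde\chi\eta},\overline{\widetilde\chi\eta})$ is not ``by construction'': it is the core computation of the proof, and you have only named it as an obstacle. The level set $\{B\in\frkJ(\ZZ/p^m\ZZ):\det B=d,\ \mathrm{Tr}(B)=c\}$ has $27$ coordinates, only $3$ of which (the diagonal) produce the generalized Jacobi sum $J(\xi,\xi,\xi)=\sum_{a_1+a_2+a_3=1}\xi(a_1)\xi(a_2)\xi(a_3)$; the $24$ off-diagonal octonion coordinates must be integrated out. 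The paper does this by writing $B$ in terms of a block $Z_1\in\frkJ_2$ and a column $w\in M_{2,1}(\frkC)$, evaluating the resulting $16$-variable and $8$-variable twisted quadratic Gauss sums over the octonion norm form $\cals_N$ (Lemmas \ref{lem.quadratic-equation}--\ref{lem.explicit-generalized-Jacobi-sum}, using $\chi_p(\cals_N)=1$ and the primitivity of $\overline{\widetilde\chi\eta}$ and of $\overline{(\widetilde\chi\eta)^2}$), which also shows that the configurations with $\det Z_1\in p\ZZ$ contribute zero and produces the extra factor $N^{12}$ that, together with the stabilizer count $N^{52}\prod_p(1-p^{-2})(1-p^{-6})(1-p^{-8})(1-p^{-12})$, yields $d_N$. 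Without this analysis the constant $d_N$ and the vanishing of the degenerate strata are unsubstantiated, so the proposal as written does not constitute a proof. (The final reduction to Gauss sums for odd $N$ via $J(\xi,\xi,\xi)=W(\xi)^3/W(\xi^3)$ is fine, modulo checking primitivity of $\xi^2$, which is exactly where the odd-$N$ hypothesis is used.)
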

As an application, we obtain the rationality result for $K^{(1)}(m,F_f,\chi)$ and $K^{(2)}(m,F_f,\chi)$ for $9\leq m\leq 2k-9$ (Theorems 7.2 and 7.3).

%By the functional equation of $L(s,\pi_f)$ we have a functional equation of $K(s,F_f)$ with respect to $s \longrightarrow  2k-s$, which is compatible with a general theory of Koecher-Maass series.%

In the case of Siegel modular forms, Choie-Kohnen \cite{CK} studied the twisted Koecher-Maass series of the first kind and proved the analytic continuation and the functional equation, and studied their special values.
On the other hand, the first named author \cite{Kat2} studied the twisted Koecher-Maass series of both the first kind and the second kind, and gave explicit formulas for them associated with the Duke-Imamoglu-Ikeda lift.

Our method of proving the above two theorems is similar to that used in \cite{Kat2}. However, unlike the Siegel case, we cannot use explicit matrix decompositions in the exceptional group. It is one of the obstacles we need to overcome.
This paper is organized as follows. In Section 2, we briefly review modular forms on the exceptional domain. Unlike in previous papers \cite{Bai, K-Y}, we need to consider the exceptional similitude group $GE_{7,3}$. We use the definition of $GE_{7,3}$ in \cite{KY1}. 

In Section 3, we prove that the twisted Koecher-Maass series of the first kind for any cusp forms has analytic continuation and the functional equation. In Section 4, we obtain the relationship between the twisted Koecher-Maass series of the first kind and the second kind.

In Section 5, we briefly review the mass formula and local density $\beta_p(T)$ from \cite{K-K-Y}, where we used them to compute the Rankin-Selberg series $R(s,F_f,F_f)$ for the Ikeda type lift $F_f$. In Section 6, we review the Ikeda type lift. By construction, the Fourier coefficients of $F_f$ are expressed in terms of a product of the local Siegel series. Therefore, using the mass formula in Section 5, we can express the twisted Koecher-Maass series of the second kind
as an Euler product:
\[K^{(2)}(s,F_f,\chi)=c\prod_p H_p(\alpha_p,\chi(p)p^{-s+k+9/2}),\]
where $c$ is in Theorem \ref{th.explicit-KM-2}, $\alpha_p$ is the $p$-th Satake parameter for $f$, and $H_p(X,t)$ is a certain power series involving the Siegel series and the local density. It is remarkable that $K^{(2)}(s,F_f,\chi)$ has an Euler product since $K^{(2)}(s,F,\chi)$ does not have an Euler product for a general cusp form $F$. In Section 7, we prove Theorems \ref{th.explicit-KM-2} and \ref{th.explicit-KM-1}, and obtain rationality result of the twisted Koecher-Maass series.

\smallskip

\noindent {\bf Notation.} In addition to the standard symbols $\ZZ,\QQ,\RR$ and $\CC$, for a prime number $p$, let $\QQ_p$ and $\ZZ_p$ be the field of $p$-adic numbers and the ring of $p$-adic integers. For a commutative ring $R$ let $R^\times$ denote the group of units in $R$.

Let $\sim$ be an equivalence relation on a set $\cals$. We denote by $\cals/\sim$ the set of equivalence classes of $\cals$ under $\sim$. We use the same symbol $\cals/\sim$ to denote a complete set of representatives.
Let $G$ be a group acting on a set $\cals$. For two elements $a_1$ and $a_2$, we write $a_2 \sim_G a_1$ if $a_2=g\cdot a_1$ with $g \in G$. The relation $\sim_G$ is an equivalence relation on $\cals$ and we write $\cals/G$ instead of $\cals/\sim_G$.

For an associate or non-associate algebra $R$ let $M_{mn}(R)$ denote the set of $m \times n$ matrix with entries in $R$.
In particular we put $M_n(R)=M_{nn}(R)$. In particular if $R$ is a commutative ring, for an element $A \in M_n(R)$ let $\det A$ denote the determinant of $A$. We put $GL_n(R)=\{ A \in M_n(R) \ | \ \det A \in R^\times \}.$
Moreover, for an $R$-module, $M$, let $GL(M)$ denote the group of $R$-linear transformations on $M$. For square matrices $A_1,\ldots,A_r$, we write $A_1 \bot \cdots\bot A_r=\begin{pmatrix}A_1 & O       &  O \\
                                    O   & \ddots & O        \\
                                    O   &    O       &   A_r
\end{pmatrix}$. We sometimes write $\mathrm{diag}(A_1,\cdots,A_r)$ instead of $A_1 \bot \cdots \bot A_r$. For $x\in \Bbb R$, let $\bold e(x)=e^{2\pi \sqrt{-1} x}$. For an element $a$ of $\ZZ/N\ZZ$, we use the same symbol $a$ to denote its representative mod $N$. Moreover, for a Dirichlet character $\chi$ mod $N$, we use the same symbol $\chi$ to denote the mapping $\ZZ/N\ZZ \ni a \text{ mod } N \mapsto \chi(a) \in \CC$.

\section{Modular forms on the exceptional domain}\label{excep}

We will freely use the notations from \cite{Bai, K-Y, K-K-Y}. 
Let $\frkC_{\Bbb Q}$ and $\mathfrak o\subset \frkC_{\Bbb Q}$ be the Cayley numbers and integral Cayley numbers, resp. The trace and the norm on $\frkC_{\QQ}$ are defined by
\[\mathrm{Tr}(x)=x+\bar x \text{ and } N(x)=x\bar x,\]
where $\bar x$ is the anti-involution in \cite[Section 2] {K-K-Y}.
Let $\frkJ_\QQ$ be the exceptional Jordan algebra consisting of matrices 
\[X=(x_{ij})_{1 \le i,j \le 3}=\begin{pmatrix} a & x & y \\ \bar x & b & z \\ \bar y & \bar z & c \end{pmatrix}\]
with $a,b,c \in \QQ$ and $x,y,z \in \frkC_{\QQ}$. We define the determinant $\det X$ and the trace $\mathrm{Tr}(X)$ by
\[\det X=abc-aN(z)-bN(y)-cN(x)+\mathrm{Tr}((xz)\bar y), \quad  \mathrm{Tr}(x)=a+b+c.\]
We define a lattice $\frkJ(\ZZ)$ of $\frkJ_{\QQ}$  by 
\[\frkJ(\ZZ)=\{ X =(x_{ij}) \in \frkJ_{\QQ} \ | \ x_{ii} \in \ZZ \text{ and } x_{ij} \in \frko \text{ for } i\not=j\}. \]
For a commutative algebra $R$, we put $\frkJ(R)=\frkJ(\ZZ) \otimes_{\ZZ} R$. 
Recall
$$\frkJ(R)^{\rm ns}=\{ X \in \frkJ(R) \ | \ \det (X) \not=0\},\quad R^+_3(R)=\{ X^2 \ | \ X \in \frkJ(R)^{\rm ns}\}.
$$
We denote by $\overline{R^+_3(\RR)}$ the closure of $R^+_3(\RR)$ in $\frkJ(\RR) \simeq \RR^{27}$. For a subring $A$ of $\RR$, set
\[\frkJ(A)_{>0}=\frkJ(A) \cap R^+_3(\RR) \text{ and } \frkJ(A)_{\ge 0}=\frkJ(A) \cap \overline{R^+_3(\RR)}.\]
We also define $\frkJ_{2,\QQ}$ as the set of matrices of forms $X=\begin{pmatrix}  a & x \\ \bar x & b \end{pmatrix},$ $a,b \in \QQ,\, x \in \frkC_\QQ,$
and its lattice $\frkJ_2(\ZZ)$ as 
\[\frkJ_2(\ZZ)=\left\{\begin{pmatrix}  a & x \\ \bar x & b \end{pmatrix} \ | \ a,b \in \ZZ, x \in \mathfrak o\right\}.\]
We define the determinant $\det X$ and the trace $\mathrm{Tr}(X)$ of $X =\begin{pmatrix}  a & x \\ \bar x & b \end{pmatrix} \in \frkJ_{2,\QQ}$ by
\[\det X=ab-N(x), \quad  \mathrm{Tr}(x)=a+b.\]
For a commutative algebra $R$, put $\frkJ_2(R)=\frkJ_2(\ZZ) \otimes_\ZZ R$.

Recall the exceptional domain:
$$\frak T:=\{Z=X+Y\sqrt{-1}\in \frak J_\Bbb C\ |\ X,Y\in \frak J_\Bbb R,\ Y\in R^+_3(\Bbb R)\}
$$
which is a complex analytic subspace of $\Bbb C^{27}$.

Define the group schemes $\calm$ and $\calm'$ over $\ZZ$ by 
\[\calm(R)=\{ g \in GL(\frkJ(R)) \ | \ \det (g\cdot X)=\nu(g)\det X \text{ with } \nu(g) \in R^\times\},
\]
\[\calm'(R)=\{ g \in \calm(R) \ |\,  \nu(g) =1 \}.
\]
Put ${\bf M}=\calm \otimes_{\ZZ} \QQ$ and ${\bf M}'=\calm' \otimes_{\ZZ} \QQ$. Then ${\bf M}$ is an algebraic group over $\QQ$ of type $GE_{6,2}$ and ${\bf M}'$ is the derived group of ${\bf M}$, which is a simple group of type $E_{6,2}$.

Let $\bold X, \bold X'$ be two $\Bbb Q$-vector spaces, each isomorphic to $\frak J$, and $\Xi, \Xi'$ be copies of $\Bbb Q$. Let $\bold W=\bold X\oplus \Xi\oplus \bold X'\oplus \Xi'$, and for $w=(X,\xi,X',\xi')\in\bold W$,
define a quartic form $Q$ on $\bold W$ by 
$$Q(w)=(X\times X, X'\times X')-\xi \det(X)-\xi' \det(X')-\frac 14( (X,X')-\xi\xi')^2,
$$
and a skew-symmetric bilinear form $\{\,,\,\}$ by
$$\{w_1,w_2\}=(X_1,X_2')-(X_2,X_1')+\xi_1\xi_2'-\xi_2\xi_1'.
$$
Recall the definition of the exceptional group of type $GE_{7,3}$ \cite{KY1}:
$$\bold G(\Bbb Q)=\{ g\in GL(\bold W)\ |\, Qg(w)= \mu(g)^2 Q(w),\, g\{w_1,w_2\}=\mu(g) \{w_1,w_2\} \, \},
$$
for some $\mu(g)\in \Bbb Q^\times$. Then $\mu$ is a rational character of $\bold G$. Let $\bold G'(\Bbb Q)=\{ g\in \bold G(\QQ) | \, \mu(g)=1\}$. 
Define the similitude factor $h_0(a)$ as
\begin{equation}\label{similitude}
h_0(a)(X,\xi,X',\xi')=(aX,a^{-1}\xi,X',a^2\xi').
\end{equation}
Then $\mu(h_0(a))=a$, and $\bold G=\{h_0(a)\}\ltimes \bold G'$. Here $\bold G'$ is a simply connected algebraic $\Bbb Q$-group of type $E_{7,3}^{\rm sc}$ as in \cite{Bai}, and $\bold G$ is a $\Bbb Q$-group of type $GE_{7,3}$. 
Let $I_\bold W$ be the identity operator on $\bold W$. Let $\bold Z$ be the central torus of $GL(\bold W)$, i.e.,
$$\bold Z_\Bbb Q=\{ \lambda I_{\bold W} |\, \text{$\lambda\in \Bbb Q$, $\lambda\ne 0$}\}.
$$
Then $\bold Z$ is the central torus of $\bold G$, and $\bold G=\bold Z\cdot \bold G'=(\bold Z\times\bold G')/\mu_2$, where $\mu_2$ is embedded in both centers, and $\bold G/\bold Z$ is the adjoint exceptional group of type $E_{7,3}^{\rm ad}$. 
The real rank of $\bold G'$ is 3, and it is split over $\Bbb Q_p$ for any prime $p$.

Let $\widetilde{\bold M}=\bold Z\cdot \bold M$ (almost direct product). Then
$\bold P=\widetilde{\bold M}\bold N$ is the Siegel parabolic subgroup of $\bold G$. We extend the character $\nu$ to $\bold P$ by defining $\nu(zmn)=\nu(m)$ for $z\in\bold Z$, $m\in \bold M$, and $n\in\bold N$.

Let $\bold G(\Bbb R)^+=\{ g\in \bold G(\Bbb R) | \, \mu(g)>0\}$. 
For a function $F: \frak T\longrightarrow \Bbb C$, $k\in \Bbb Z_{\ge 0}$, and $g\in \bold G(\Bbb R)^+$, 
let $g=z g'$ with $z\in \bold Z(\Bbb R)$ and $z>0$, and $g'\in \bold G'(\Bbb R)$. 
We define the ``slash operator" by 
$$F|_k g(Z):=j(g',Z)^{-k}F(g'Z),
$$
where $j(g',Z)$ is the canonical factor of automorphy in \cite{Kim1}. Hence the center acts trivially.
We write $F|g$ instead of $F|_k g$ when there is no confusion.

Let $\Gamma=\bold G(\Bbb Z)$. 
For a positive integer $N$, define 
$\calm(N;\ZZ)$ to be the kernel of the map $\calm(\Bbb Z)\longrightarrow \calm(\Bbb Z/N\Bbb Z)$.

Let $F\in S_k(\Gamma)$ be a cusp form of weight $k$ ($k$ even) with respect to $\Gamma$, and write 
$$F(Z)=\sum_{T\in \frak J(\ZZ)_{>0}} a_F(T)\bold e( (T,Z)).
$$ 
For a primitive Dirichlet character $\chi$ mod $N$, let
$$F_\chi(Z)=\sum_{T\in \frak J(\ZZ)_{>0}} \chi({\rm tr}(T)) a_F(T) \bold e((T,Z)).
$$ 
Then for $m\in \calm(N;\ZZ)$, $F_\chi(mZ)=F_\chi(Z)$.

Let $W(\bar\chi)=\sum_{a \, \text{(mod $N$)}} \bar\chi(a) \bold e(a/N)$ be the Gauss sum associated to $\bar\chi$. 
Note that if $(a,N)>1$, $\bar\chi(a)=0$. Hence the sum is in fact over all $a$ (mod $N$) such that $(a,N)=1$.

\begin{theorem} \label{th.fc-twisted-F}
Let $h_0(N)\in \bold G(\Bbb Q)$ be the similitude factor. Then $h_0(N) Z=N^{-1}Z$.
Let $\iota_N=N h_0(N)\iota h_0(N)^{-1}=h_0(N^2)\iota$. Then
$$F_\chi| \iota_N=W(\chi)^2 N^{-1} F_{\bar\chi}.
$$
\end{theorem}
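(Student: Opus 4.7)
The plan is to adapt the classical argument for the action of the Fricke-type involution on twisted Siegel cusp forms (\cite{CK,Kat2}) to the exceptional domain, replacing explicit matrix computations in $Sp_{2n}$ with structural arguments in $\bold G$. First, using $\mathrm{tr}(T) = (T, I_{\frak J})$ (where $I_{\frak J}$ denotes the identity of $\frak J$) together with the standard Gauss-sum identity $\chi(n)W(\bar\chi) = \sum_{a \pmod N}\bar\chi(a)\bold e(an/N)$ (valid for all $n$ with the convention $\bar\chi(a) = 0$ for $(a,N)>1$), I would rewrite
$$F_\chi(Z) = \frac{1}{W(\bar\chi)}\sum_{a \pmod N}\bar\chi(a)\, F|u_{a/N}(Z),\qquad u_c(Z) := Z + cI_{\frak J},$$
where $u_c \in \bold N(\QQ)$ is unipotent, so $F|u_c = F\circ u_c$. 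Applying $|\iota_N$ termwise then reduces the theorem to computing $F|(u_{a/N}\iota_N)$ for each $a$ coprime to $N$.

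The heart of the argument is the following identity in $\bold G(\QQ)$: for $(a,N)=1$, choose $d_a \in \ZZ$ with $ad_a \equiv 1 \pmod N$, write $ad_a - c_aN = 1$, and set $\gamma_a = u_{a/N}\iota_N u_{d_a/N}$. The claim is that $\gamma_a = z_a\cdot\tilde\gamma_a$ with $z_a \in \bold Z(\QQ)$ central and $\tilde\gamma_a \in \Gamma = \bold G(\ZZ)$. In the Siegel case this is the elementary identity
$$\begin{pmatrix}I&(a/N)I\\0&I\end{pmatrix}\!\begin{pmatrix}0&-I\\N^2I&0\end{pmatrix}\!\begin{pmatrix}I&(d_a/N)I\\0&I\end{pmatrix} = (NI)\cdot\begin{pmatrix}aI&c_aI\\NI&d_aI\end{pmatrix},$$
whose right-hand factor lies in $Sp_{2n}(\ZZ)$ precisely because $ad_a - c_aN = 1$. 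In our exceptional setting I would characterize $\tilde\gamma_a$ by its action on $\frak T$ as the Jordan-algebra ``M\"obius transformation" $Z \mapsto (aZ + c_aI_{\frak J})(NZ + d_aI_{\frak J})^{-1}$, and verify, using the coordinate description of $\bold W$ from \cite{K-Y,KY1}, that this map extends to a $\ZZ$-linear automorphism of $\bold W(\ZZ)$ preserving both $Q$ and $\{\cdot,\cdot\}$; the relation $ad_a - c_aN = 1$ is exactly what makes this work.

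Granted this identity, one has $u_{a/N}\iota_N = z_a\tilde\gamma_a u_{-d_a/N}$, and since $F|\tilde\gamma_a = F$ (using $F \in S_k(\Gamma)$ and $k$ even) while the center acts trivially, one gets $F|(u_{a/N}\iota_N)(Z) = F(Z - (d_a/N)I_{\frak J})$. Inserting the Fourier expansion of $F$ and interchanging sums yields
$$F_\chi|\iota_N(Z) = \frac{1}{W(\bar\chi)}\sum_T a_F(T)\bold e((T,Z))\sum_{a \pmod N}\bar\chi(a)\bold e\!\left(-\tfrac{d_a\,\mathrm{tr}(T)}{N}\right).$$
Reindexing by $d = d_a$ converts the inner sum into $\sum_d\chi(d)\bold e(-d\,\mathrm{tr}(T)/N) = \chi(-1)\bar\chi(\mathrm{tr}(T))W(\chi)$ by the Gauss-sum formula for primitive $\chi$, and the resulting prefactor $\chi(-1)W(\chi)/W(\bar\chi)$ simplifies to $W(\chi)^2/N$ via $W(\chi)W(\bar\chi) = \chi(-1)N$, giving the desired identity.

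The main obstacle is the structural identity of the second paragraph: in the Siegel case it is a one-line block-matrix calculation, but for $\bold G$ of type $GE_{7,3}$ there is no convenient block description, so establishing that the Jordan-algebra M\"obius transformation above is realized by an integral element of $\bold G$ is the substantive new ingredient that replaces the classical matrix manipulation.
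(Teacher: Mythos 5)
Your overall strategy coincides with the paper's: write $F_\chi = W(\bar\chi)^{-1}\sum_{a}\bar\chi(a)\,F|p_{\frac aN 1_3}$, reduce the action of $\iota_N$ to a Fricke-type group identity, and finish with the Gauss-sum relation $W(\chi)W(\bar\chi)=\chi(-1)N$. Your final bookkeeping is correct and matches the paper up to a sign convention: the paper takes $\lambda N-ab=1$ (so your $d_a=-b$), proves $p_{\frac aN 1_3}\iota_N=N\iota\gamma p_{\frac bN 1_3}$ with $\gamma\in\Gamma$, and recognizes $\sum_b\chi(b)F|p_{\frac bN 1_3}$ directly as $W(\chi)F_{\bar\chi}$, which is the same computation as your reindexing by $d_a$ followed by $\chi(-1)W(\chi)/W(\bar\chi)=W(\chi)^2/N$.

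The gap is the structural identity you explicitly defer: that $u_{a/N}\iota_N u_{d_a/N}$ is a central element times an element of $\Gamma$. This is where essentially all of the work in the paper's proof lies, and your plan for it --- characterize $\tilde\gamma_a$ by the M\"obius action $Z\mapsto(aZ+c_aI_{\frak J})(NZ+d_aI_{\frak J})^{-1}$ on $\frak T$ and then ``verify'' that this map is realized by an integral automorphism of $\bold W$ preserving $Q$ and $\{\,,\,\}$ --- is not yet a proof and starts from the wrong end: the action on $\frak T$ does not determine an element of $GL(\bold W)$ (the center acts trivially), and the existence of an element of $\bold G(\QQ)$ inducing a prescribed M\"obius map is itself something to establish. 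The paper avoids this by defining the candidate directly as the group element $g=N^{-1}\iota^{-1}p_{\frac aN 1_3}\iota_N p_{\frac bN 1_3}^{-1}$, which is manifestly in $\bold G(\QQ)$; it then rewrites $g$ as $N\,p_{-\frac aN 1_3}'\,p_{-bN 1_3}\,h_0(N^{-2})$ using $p_B'=\iota p_{-B}\iota^{-1}$ and $\iota_N=h_0(N^2)\iota$, computes $g(X,\xi,X',\xi')$ in closed form with the identities $(1_3,X)=\mathrm{tr}(X)$ and $1_3\times X=\tfrac12\mathrm{tr}(X)1_3-\tfrac12X$, and checks that $g$ and $g^{-1}$ preserve $\bold W_{\frko}$ by repeated use of $1+ab=\lambda N$. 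To complete your argument you would need to carry out this (or an equivalent) explicit integrality verification on $\bold W$; the $\frak T$-action alone does not suffice.
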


So by taking $Z=\sqrt{-1} Y$, we have
\begin{equation}\label{fun}
F_\chi(\sqrt{-1} (N^2 Y)^{-1})=(-1)^{\frac {3k}2} W(\chi)^2 N^{3k-1} \det(Y)^k F_{\bar\chi}(\sqrt{-1} Y).
\end{equation}

\begin{proof} For $(a,N)=1$, let $\lambda,b\in\Bbb Z$ such that $\lambda N-ab=1$. Then we claim that
\begin{equation}\label{gamma}
p_{\frac aN 1_3} \iota_N=N\iota \gamma p_{\frac bN 1_3}.
\end{equation}
for some $\gamma\in \Gamma$, where $p_B\in \bold N$. We prove it below.
Since 
$$\sum_{a \, \text{(mod $N$)}} \bar\chi(a) \bold e({\rm tr}(T)\frac aN)=\chi({\rm tr}(T)) W(\bar\chi),
$$
we have
$$F_\chi=\frac 1{W(\bar\chi)} \sum_{a \, \text{(mod $N$)}} \bar\chi(a) F|p_{\frac aN 1_3}.
$$
Then 
\begin{eqnarray*}
&& W(\bar\chi) F_\chi| \iota_N= \sum_{a\, \text{(mod $N$)}\atop (a,N)=1} \bar\chi(a) F| p_{\frac aN 1_3} \iota_N = \sum_{a\, \text{(mod $N$)}\atop (a,N)=1} \bar\chi(a) F| p_{\frac bN 1_3} \\
&& \phantom{xxxxxxxxx} =\chi(-1) \sum_{b\, \text{(mod $N$)}\atop (b,N)=1} \chi(b) F| p_{\frac bN 1_3} =\chi(-1) W(\chi) F_{\bar\chi}.
\end{eqnarray*}

Since $W(\chi)W(\bar\chi)=\chi(-1)N$, we have the result.
\end{proof}

\noindent{\it Proof of (\ref{gamma}).}
We show that 
$g=N^{-1}\iota^{-1} p_{\frac aN 1_3} \iota_N p_{\frac bN 1_3}^{-1}\in \Gamma$. Note that $\iota^{-1}=-\iota$ and $p_A^{-1}=p_{-A}$. It is enough to show that $g(X,\xi,X',\xi')\in \bold W_\mathfrak o$ and $g^{-1}(X,\xi,X',\xi')\in \bold W_\mathfrak o$, where 
$\bold W_\mathfrak o=\{(X,\xi,X',\xi')\, | \, X,X'\in \frak J(\Bbb Z), \xi,\xi'\in\Bbb Z\}$.
Recall $\iota(X,\xi,X',\xi')=(-X',-\xi',X,\xi)$ and $h_0(N)^{-1}=h_0(N^{-1})$. Also $p_B'=\iota p_{-B}\iota^{-1}$ is in the opposite unipotent subgroup.
Hence
$g=N p_{-\frac aN 1_3}'p_{-bN 1_3} h_0(N^{-2}$. We can compute $g(X,\xi,X',\xi')=(X_1,\xi_1,X_1',\xi_1')$, where
\begin{eqnarray*}
&& X_1=\frac XN-\frac {b\xi'}{N^2} 1_3-\frac {2a}N 1_3\times (NX'-2b1_3\times X+\frac {b^2\xi'}N 1_3) \\
&& \phantom{xxxxxxxx} +\frac {a^2}{N^2} 1_3(N^3\xi-bN^2(1_3,X')+b^2N(1_3,X)-b^3\xi'),\\
&& \xi_1=N^3\xi-bN^2(1_3,X')+b^2N(1_3,X)-b^3\xi',\\
&& X_1'=NX'-2b1_3\times X+\frac {b^2\xi'}N 1_3-\frac aN 1_3(N^3\xi-bN^2(1_3,X')+b^2N (1_3,X)-b^3\xi'),\\
&& \xi_1'=\frac {\xi}{N^3}-\frac a{N^3} (1_3, X)+\frac {ab\xi'}{N^3} (1_3,1_3)+\frac {a^2}{N^2}(1_3, NX'-2b 1_3\times X+\frac {b^2\xi'}N 1_3) \\
&& \phantom{xxxxxxx} -\frac {a^3}{N^3} (N^3\xi-bN^2(1_3,X')+b^2N (1_3,X)-b^3\xi').
\end{eqnarray*}

We use the fact that $(1_3,X)={\rm tr}(X)$, and $1_3\times X=\frac 12 {\rm tr}(X)1_3-\frac 12 X$. In particular, $(1_3,1_3)=3, 1_3\times 1_3=1_3$.
By using $1+ab=\lambda N$, we obtain our result.
For example, 
$$\xi_1'=-a^3\xi+\frac {\xi'}{N^3}(1+ab)^3-\frac a{N^2} {\rm tr}(X)(1+ab)^2+\frac {a^2}N {\rm tr}(X')(1+ab)\in \Bbb Z. 
$$
Since $g^{-1}=Np_{\frac bN 1_3}\iota_N^{-1}p_{\frac aN 1_3}^{-1} \iota$, we can show in the same way that
$g^{-1}(X,\xi,X',\xi')\in \bold W_\mathfrak o$. $\square$

\begin{remark}
Consider $\phi: \bold G(\Bbb Z)\longrightarrow \bold G(\Bbb Z/N^2\Bbb Z)$.
Let $I$ be the subgroup of $\bold G(\Bbb Z/N^2\Bbb Z)$ generated by $\bold N(\Bbb Z/N^2\Bbb Z)$ and $M_0$, where $M_0$ is the preimage of the scalar matrices under the map $\bold M(\Bbb Z/N^2\Bbb Z)\longrightarrow \bold M(\Bbb Z/N\Bbb Z)$.
Let $\Gamma_0^*(N^2)=\phi^{-1}(I)$. Then $\calm(N;\ZZ)\subset \Gamma_0^*(N^2)$. 

For $\gamma\in \Gamma_0^*(N^2)$, $\gamma\equiv p$ (mod $N$) for some $p\in \bold M(\Bbb Z/N\Bbb Z)$. Define $\chi(\gamma)=\chi(\nu(p))$. It is well-defined. If $G$ is a holomorphic function on $\frak T$ which satisfies
$$G|_k \gamma(Z)=\omega(\gamma) G(Z),\quad Z\in \frak T,\, \gamma\in\Gamma_0^*(N^2),
$$
then $G$ is called a modular form on $\frak T$ of weight $k$ with respect to $\Gamma_0^*(N^2)$ with the central character
$\omega$.

Let $M_k(\Gamma_0^*(N^2),\omega)$ be the space of 
modular forms of weight $k$ with respect to $\Gamma_0^*(N^2)$ on $\frak T$ with the central character $\omega$. 
We can also define the space of cusp forms $S_k(\Gamma_0^*(N^2),\omega)$ using the Siegel $\Phi$-operator. Then we expect 
$F_\chi\in S_k(\Gamma_0^*(N^2),\chi^2)$. We do not need this fact in this paper.
\end{remark}

\section{Twisted Koecher-Maass series of the first kind}
From now on put ${\bf e}(x)=\exp(2\pi \sqrt{-1}x)$ for $x \in \CC$.
Let $F$ be a cusp form of weight $k$ ($k$ even) on the exceptional domain $\frak T$ with respect to $\Gamma$, and let
$F(Z)=\sum_{T \in \frak J(\Bbb Z)_{> 0}} a_F(T){\bf e}((T,Z)).$
Recall the twisted Koecher-Maass series $K^{(1)}(s,F,\chi)$ of the first kind for a Dirichlet character $\chi$;
$$K^{(1)}(s,F,\chi)=\sum_{T \in {\frak J}(\Bbb Z)_{>0}/\calm'(N;\ZZ)} \frac {\chi(\mathrm{Tr}(T)) a_F(T)}{\epsilon_N(T) \det(T)^s}.
$$
In this section we prove the analytic continuation and the functional equation of $K^{(1)}(s,F,\chi)$.

Recall that $d^*Y=det(Y)^{-9}dY$ is the invariant measure in $R_3^+(\Bbb R)$.
Let $\mathcal R$ be a fundamental domain for the action of $\calm(\Bbb Z)$ on $R_3^+(\Bbb R)$.
Let $\mathcal R_N$ be a fundamental domain for the action of $\calm(N;\ZZ)$ on $R_3^+(\Bbb R)$. We may take 
$$\mathcal R_N=\bigcup_{a=1}^r m_a \mathcal R,
$$
where $\{m_1,...,m_r\}$ is a set of representatives for $\calm'(\ZZ)/\calm'(N;\ZZ)$.
We prove

\begin{theorem} \label{KM-general}
For $Re(s)>9+\frac k2$, $K^{(1)}(s,F,\chi)$ converges absolutely, and in this region we have the integral representation
$$\Lambda(s,F,\chi)=\pi^{12}(2\pi)^{-3s} N^{3s} \Gamma(s)\Gamma(s-4)\Gamma(s-8) K^{(1)}(s,F,\chi)=N^{3s} \int_{\mathcal R_N} F_\chi(\sqrt{-1} Y) \det(Y)^{s}\, d^*Y.
$$
It has the analytic continuation to all of $\Bbb C$, and satisfies the functional equation
$$\Lambda(k-s,F,\chi)=(-1)^{\frac {3k}2} \frac {W(\chi)^2}N \Lambda(s,F,\bar\chi).
$$
\end{theorem}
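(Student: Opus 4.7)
The plan is to carry out the classical Hecke-type integral representation argument, adapted from the Siegel case in \cite{Kat2} to the exceptional domain. Absolute convergence of $K^{(1)}(s,F,\chi)$ for $\mathrm{Re}(s)>9+k/2$ follows from the standard Hecke bound $|a_F(T)|=O(\det(T)^{k/2})$ for cusp forms on $\frkT$, combined with a polynomial count of $\calm'(N;\ZZ)$-orbits of bounded determinant. For the integral representation, since $\calm'(N;\ZZ)$ is the kernel of reduction mod $N$, for $m\in\calm'(N;\ZZ)$ and $T\in\frkJ(\ZZ)_{>0}$ we have $m\cdot T\equiv T\pmod N$, hence $\chi(\mathrm{Tr}(m\cdot T))=\chi(\mathrm{Tr}(T))$; combined with $a_F(m\cdot T)=a_F(T)$ and $\det(m\cdot T)=\det(T)$, this lets me unfold the Fourier expansion of $F_\chi$ against $\calm'(N;\ZZ)$-translates of $\mathcal R_N$ to obtain
\begin{equation*}
\int_{\mathcal R_N} F_\chi(\sqrt{-1}Y)\det(Y)^s d^*Y = \sum_{T\in\frkJ(\ZZ)_{>0}/\calm'(N;\ZZ)} \frac{\chi(\mathrm{Tr}(T))a_F(T)}{\epsilon_N(T)} \int_{R_3^+(\RR)} e^{-2\pi(T,Y)}\det(Y)^s d^*Y.
\end{equation*}
The inner Mellin integral is the Koecher-Gindikin gamma integral for the rank-$3$, degree-$8$ symmetric cone; after rescaling $Y\mapsto Y/(2\pi)$ (using that $d^*Y$ is scale-invariant), it evaluates to $(2\pi)^{-3s}\pi^{12}\Gamma(s)\Gamma(s-4)\Gamma(s-8)\det(T)^{-s}$, yielding the asserted integral representation after multiplying by $N^{3s}$.

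For analytic continuation and the functional equation, I would split the integral at $\det(Y)=N^{-3}$, the determinant-value fixed by the involution $Y\mapsto N^{-2}Y^{-1}$ that realizes the action of $\iota_N=h_0(N^2)\iota$ on $\sqrt{-1}Y$. Choose $\mathcal R_N$ symmetric under this involution and set $A:=\mathcal R_N\cap\{\det Y\geq N^{-3}\}$, so $\mathcal R_N\setminus A$ is the image of $A$. Let $L(s,\xi):=\int_A F_\xi(\sqrt{-1}Y)\det(Y)^s d^*Y$, which is entire in $s$ by the exponential decay of $F_\xi$ at infinity. For the integral over $\mathcal R_N\setminus A$, substitute $Y=(N^2 Y')^{-1}$; the invariance of $d^*Y$ under inversion and scaling, the identity $\det(Y)^s=N^{-6s}\det(Y')^{-s}$, and Theorem \ref{th.fc-twisted-F} transform it into $(-1)^{3k/2}W(\chi)^2 N^{3k-1-6s}L(k-s,\bar\chi)$. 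Combining,
\begin{equation*}
\Lambda(s,F,\chi) = N^{3s}L(s,\chi) + (-1)^{3k/2}W(\chi)^2 N^{3(k-s)-1}L(k-s,\bar\chi),
\end{equation*}
which is manifestly entire in $s$. The symmetry of this expression under $s\leftrightarrow k-s$, $\chi\leftrightarrow\bar\chi$, combined with $W(\chi)W(\bar\chi)=\chi(-1)N$ and $(-1)^{3k}=1$ (since $k$ is even), then yields the claimed functional equation.

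The principal technical hurdle is verifying that the Fricke-type involution $Y\mapsto N^{-2}Y^{-1}$ normalizes $\calm'(N;\ZZ)$, so that $\mathcal R_N$ may indeed be chosen symmetric under it. Since $\iota_N=h_0(N^2)\iota\in\mathbf{G}(\QQ)$ lies outside $\calm'(\ZZ)$, this reduces to establishing the Atkin-Lehner-type identity $\iota_N\calm'(N;\ZZ)\iota_N^{-1}=\calm'(N;\ZZ)$. Its verification requires analyzing how $\iota$ interchanges the $\mathbf{X}$- and $\mathbf{X}'$-components of $\mathbf{W}$ while the similitude factor $h_0(N^2)$ respects the mod-$N$ reduction structure defining $\calm'(N;\ZZ)$.
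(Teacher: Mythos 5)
Your proposal follows essentially the same route as the paper's proof: Hecke's bound plus an integral comparison for convergence, the standard unfolding over $\mathcal R_N$ combined with Baily's gamma integral $\int_{R_3^+(\RR)}e^{-2\pi(T,Y)}\det(Y)^s\,d^*Y=\pi^{12}(2\pi)^{-3s}\Gamma(s)\Gamma(s-4)\Gamma(s-8)\det(T)^{-s}$ for the integral representation, and the split at $\det(Y)=N^{-3}$ with the substitution $Y\mapsto(N^2Y)^{-1}$ and Theorem \ref{th.fc-twisted-F} for the continuation and functional equation, arriving at the same symmetric two-term expression for $\Lambda$. The only difference is that you explicitly flag (without fully resolving) the compatibility of $\mathcal R_N$ with the involution $Y\mapsto(N^2Y)^{-1}$, a point the paper uses implicitly.
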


\begin{proof} By Hecke's bound, $a_F(T)\ll det(T)^{\frac k2}$. Hence 
$$K^{(1)}(s,F,\chi)\ll \sum_{T \in {\mathfrak J}(\Bbb Z)_{>0}/\calm(N;\ZZ)} \frac {\det(T)^{\frac k2-Re(s)}}{\epsilon_N(T)}.
$$
By integral test, the series is majorized by 
$$\int_{R_3^+(\Bbb R)\atop \det(Y)\geq 1} \det(Y)^{\frac k2-Re(s)+9}\, d^*Y,
$$
which converges for $Re(s)>9+\frac k2$.

We have $F_\chi(\sqrt{-1} Y)=\sum_{T\in \frak J(\Bbb Z)_{>0}} \chi({\rm tr}(T))a_F(T)e^{-2\pi(T,Y)}$. Since
$a_F(u T)=a_F(T)$ for $u\in \mathcal U_T(\Bbb Z)\cap \calm(N;\ZZ)$, 
\begin{eqnarray*}
&& N^{3s} \int_{\mathcal R_N} F_\chi(\sqrt{-1} Y) \det(Y)^s\, d^*Y= N^{3s} \sum_{T\in \frak J(\Bbb Z)_{>0}} \chi({\rm tr}(T))a_F(T) \int_{\frak R_N} e^{-2\pi (T,Y)} \det(Y)^s\, d^*Y\\
&&\phantom{xxx} =N^{3s}\sum_{T\in \frak J(\Bbb Z)_{>0}/\calm(N;\ZZ)} \frac {\chi({\rm tr}(T))a_F(T)}{\epsilon_N(T)} \sum_{u\in \mathcal U_T(\Bbb Z)\cap \calm(N;\ZZ)} \int_{\mathcal R_N} e^{-2\pi (T,Y)} \det(Y)^s\, d^*Y\\
&&\phantom{xxx} =N^{3s} \sum_{T\in \frak J(\Bbb Z)_{>0}/\calm(N;\ZZ)} \frac {\chi({\rm tr}(T)) a_F(T)}{\epsilon_N(T)} \int_{R_3^+(\Bbb R)} e^{-2\pi (T,Y)} \det(Y)^s\, d^*Y\\
&&\phantom{xxx} =N^{3s} \pi^{12}(2\pi)^{-3s} \Gamma(s)\Gamma(s-4)\Gamma(s-8) \sum_{T\in T\in \frak J(\Bbb Z)_{>0}/\calm(N;\ZZ)} \frac {\chi({\rm tr}(T))a_F(T)}{\epsilon_N(T)} \det(T)^{-s}.
\end{eqnarray*}

Here we use the fact \cite[page 538]{Bai} that for $Re(s)>8$, 

$$\int_{R_3^+(\Bbb R)} e^{-2\pi (T,Y)} \det(Y)^s\, d^*Y=\pi^{12}(2\pi)^{-3s}\Gamma(s)\Gamma(s-4)\Gamma(s-8) \det(T)^{-s}.
$$

For the functional equation, we write 
$$\Lambda(s,F)=N^{3s}\int_{\mathcal R_N\atop \det(Y)\leq N^{-3}} F_\chi(\sqrt{-1} Y) \det(Y)^s\, d^*Y + N^{3s} \int_{\mathcal R_N\atop det(Y)\geq N^{-3}} F_\chi(\sqrt{-1} Y) \det(Y)^s\, d^*Y.
$$
Notice that $\det((N^2Y)^{-1})=N^{-6} \det(Y)^{-1}$. Use the change of variables $Y\longmapsto (N^2Y)^{-1}$, and 
the functional equation (\ref{fun}):
\begin{eqnarray*}
&& N^{3s}\int_{\mathcal R_N\atop \det(Y)\leq N^{-3}} F_\chi(\sqrt{-1} Y) \det(Y)^s\, d^*Y =N^{3s} \int_{\mathcal R_N\atop \det(Y)\geq N^{-3}} F_\chi(\sqrt{-1} N^{-2}Y^{-1}) \det(Y)^{-s}\, d^*Y \\
&&\phantom{xxxxxxxxxxx} = \int_{\mathcal R_N\atop \det(Y)\geq N^{-3}} (-1)^{\frac {3k}2} N^{3k-3s} \frac {W(\chi)^2}N F_{\bar\chi}(\sqrt{-1} Y) \det(Y)^{k-s}\, d^*Y.
\end{eqnarray*}
Hence
$$\Lambda(s,F)=\int_{\mathcal R_N\atop \det(Y)\geq N^{-3}} 
\left( F_\chi(\sqrt{-1} Y)\left(N^3 \det(Y)\right)^s + (-1)^{\frac {3k}2} \frac {W(\chi)^2}N F_{\bar\chi}(\sqrt{-1} Y)\left(N^3 \det(Y)\right)^{k-s} \right) \, d^*Y.
$$
The analytic continuation and the functional equation follow from this.
\end{proof}

\begin{remark} Even though we do not need in this paper, one can ask:
is it true that $\epsilon_N(T)=1$ for $N\geq 3$?
\end{remark}

\section{Relationship between the twisted Koecher-Maass series of the first and the second kind}

In this section we express the twisted Koecher-Maass series of the first kind in terms of the twisted Koecher-Maass series of the second kind. Define $h(A,\chi)$ as
$$h(A,\chi)=\sum_{g \in \calm'(\ZZ)/\calm'(N;\ZZ)} \chi(\mathrm{Tr}(g\cdot A)).$$

\begin{proposition} \label{rewritten-twisted_Koecher-Maass}
 Let 
$$F(Z)=\sum_{T \in \frkJ(\ZZ)_{>0}}a_F(T)\exp(2\pi \sqrt{-1}(T,Z))$$ 
be a cusp form of weight $k$ ($k$ even) with respect to ${\bf G}(\ZZ)$ and $\chi$ a Dirichlet character mod $N$. Then
we have
$$K^{(1)}(s,F,\chi)=\sum_{T \in \frkJ(\ZZ)_{>0}/\calm'(\ZZ)} {h(T,\chi)a_F(T) \over \epsilon(T) (\det T)^{s}}.$$
\end{proposition}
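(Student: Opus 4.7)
The plan is to regroup the sum defining $K^{(1)}(s,F,\chi)$ by partitioning the $\calm'(N;\ZZ)$-orbits into $\calm'(\ZZ)$-orbits. For a fixed $T\in \frkJ(\ZZ)_{>0}$, every element in the $\calm'(\ZZ)$-orbit of $T$ has the form $g\cdot T$ for some $g\in \calm'(\ZZ)$, and the set of $\calm'(N;\ZZ)$-orbits contained in this $\calm'(\ZZ)$-orbit is naturally in bijection with the double coset space $\calm'(N;\ZZ)\backslash \calm'(\ZZ)/\calu_T(\ZZ)$, since $\calu_T(\ZZ)$ is the $\calm'(\ZZ)$-stabilizer of $T$.

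Next I would verify three invariance properties for $g\in \calm'(\ZZ)$. First, $\det(g\cdot T)=\nu(g)\det(T)=\det(T)$ because $\nu(g)=1$. Second, $a_F(g\cdot T)=a_F(T)$, which follows from the modularity of $F$ under $\bold G(\ZZ)$: the element $g$ sits in the Levi of the Siegel parabolic and acts on $\frkT$ with trivial automorphy factor, so comparing Fourier expansions of $F|g=F$ yields the invariance. Third, $\epsilon_N(g\cdot T)=\epsilon_N(T)$, because $\calu_{g\cdot T}(\ZZ)=g\,\calu_T(\ZZ)\,g^{-1}$ and $\calm'(N;\ZZ)$ is normal in $\calm'(\ZZ)$ as the kernel of a group homomorphism. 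By the same normality, $\chi(\mathrm{Tr}(g\cdot T))$ only depends on the image of $g$ in $\calm'(N;\ZZ)\backslash \calm'(\ZZ)/\calu_T(\ZZ)$ (using $u\cdot T=T$ for $u\in \calu_T(\ZZ)$ and $\mathrm{Tr}(mg\cdot T)\equiv \mathrm{Tr}(g\cdot T)\pmod N$ for $m\in\calm'(N;\ZZ)$).

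Combining these facts regroups the series as
\begin{align*}
K^{(1)}(s,F,\chi)=\sum_{T\in \frkJ(\ZZ)_{>0}/\calm'(\ZZ)}\frac{a_F(T)}{\epsilon_N(T)\det(T)^s}\sum_{g\in \calm'(N;\ZZ)\backslash \calm'(\ZZ)/\calu_T(\ZZ)}\chi(\mathrm{Tr}(g\cdot T)).
\end{align*}
The final step is to compute the double coset multiplicity: by normality of $\calm'(N;\ZZ)$, each double coset $\calm'(N;\ZZ)\,g\,\calu_T(\ZZ)$ is a union of exactly $|\calu_T(\ZZ)|/|\calu_T(\ZZ)\cap \calm'(N;\ZZ)|=\epsilon(T)/\epsilon_N(T)$ right cosets of $\calm'(N;\ZZ)$, on each of which $\chi(\mathrm{Tr}(g\cdot T))$ takes the same value. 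Therefore
\[
\sum_{g\in \calm'(N;\ZZ)\backslash \calm'(\ZZ)/\calu_T(\ZZ)}\chi(\mathrm{Tr}(g\cdot T))=\frac{\epsilon_N(T)}{\epsilon(T)}\,h(T,\chi),
\]
and substituting gives the claimed identity. The main obstacle is the bookkeeping of the double-coset multiplicity and the verification of the three invariances above; once one accepts the normality of $\calm'(N;\ZZ)$ inside $\calm'(\ZZ)$, the finiteness of $\calu_T(\ZZ)$ (which is the integral points of a group scheme of compact type $F_4$), and the $\calm'(\ZZ)$-invariance of the Fourier coefficients of $F$, the rest is elementary orbit-counting.
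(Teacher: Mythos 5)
Your proof is correct and follows essentially the same route as the paper: both regroup the sum over $\calm'(N;\ZZ)$-orbits into $\calm'(\ZZ)$-orbits, identify the inner orbits with cosets of $\calm'(N;\ZZ)\calu_T(\ZZ)$ (your double cosets $\calm'(N;\ZZ)\backslash\calm'(\ZZ)/\calu_T(\ZZ)$ coincide with these by normality), and absorb the multiplicity via $\epsilon(T)/\epsilon_N(T)=\#\bigl(\calm'(N;\ZZ)\calu_T(\ZZ)/\calm'(N;\ZZ)\bigr)$. You are merely more explicit than the paper about the invariances of $\det$, $a_F$, and $\epsilon_N$ along the $\calm'(\ZZ)$-orbit, which the paper leaves implicit.
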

\begin{proof}
This can be proved in the same manner as in \cite[Proposition 3.1]{K-M}. But for readers' convenience, we give a proof. The assertion is trivial if $N=1$. Suppose that $N >1$. We have
\begin{align*}
&K^{(1)}(s,F,\chi)=\sum_{T \in \frkJ(\ZZ)_{>0}/\calm'(\ZZ)} {a_F(T) \over (\det T)^s} \sum_{T' \in \frkJ_A/\calm'(N;\ZZ)} {\chi(\mathrm{Tr}(T')) \over \epsilon_N(T')},
\end{align*}
where 
\[\frkJ_T=\{ T' \in \frkJ(\ZZ)_{>0} \ | \ T' \sim_{\calm'(\ZZ)} T \}.\]
We note that $\epsilon_N(T')=\epsilon_N(T)$ for any $T' \in \frkJ_T$, and 
\[{\epsilon(T) \over \epsilon_N(T)}=\#(\calm'(N;\ZZ)\calu_T(\ZZ)/ \calm'(N;\ZZ)).\]
Fix an element $T$ of $\frkJ(\ZZ)_{>0}$. Let $g_1,g_2 \in \calm'(\ZZ)$. Then $g_1\cdot T \sim_{\calm'(N;\ZZ)} g_2 \cdot T$ if and only if $g_2 \in g_1 \calm'(N;\ZZ) \calu_T(\ZZ) $. Therefore, 
the set $\{g \cdot T \ | \ g \in  \calm'(\ZZ)/ \calm'(N;\ZZ) \calu_T(\ZZ) \}$ is a complete set of representatives of $\frkJ_T/\calm'(N;\ZZ)$. Moreover, we note that $\mathrm{Tr}(g \cdot T) \text{ mod } N$ depends only on $g \calm'(N;\ZZ)\calu_T(\ZZ)$.
Hence we have
\begin{align*}
&\epsilon(T)\sum_{T' \in \frkJ_T/\calm'(N;\ZZ)} {\chi(\mathrm{Tr}(T')) \over \epsilon_N(T')}=\sum_{g \in \calm'(\ZZ)/\calm'(N;\ZZ)\calu_T(\ZZ)  } \chi(\mathrm{Tr}(g \cdot T)){\epsilon(T) \over \epsilon_N(T)}\\
&\phantom{xxxxxxxx} =\sum_{g \in \ \calm'(\ZZ)/ \calm'(N;\ZZ)\calu_T(\ZZ)} \chi(\mathrm{Tr}(g \cdot T))\#(\calm'(N;\ZZ)\calu_T(\ZZ)/ \calm'(N;\ZZ))\\
&\phantom{xxxxxxxx} =\sum_{g \in \calm'(\ZZ)/ \calm'(N;\ZZ)} \chi(\mathrm{Tr}(g \cdot T)).
\end{align*}
This proves the assertion. 
\end{proof}
\begin{remark} \label{rem.comment-on-K-M-Prop.3.1}
\begin{itemize}
\item[(1)] We need not assume that $\epsilon_N(T)=1$ for $N>1$ unlike \cite[Proposition 3.1]{K-M}.
\item[(2)] There is a typo in the proof of \cite[Proposition 3.1]{K-M}. The equality `$\mathrm{tr}(A[U_1])=\mathrm{tr}(A[U_2])$' on page 463. line 7 should be `$\mathrm{tr}(A[U_1]) \equiv \mathrm{tr}(A[U_2]) \text{ mod } N$.'
\end{itemize}
\end{remark}

Let $\chi$ be a Dirichlet character mod $N.$ Fix a prime factor $p$ of $N.$ For an integer $n$ prime to $p,$ take an integer $m$ such that 
$$m \equiv \begin{cases} n \text{ mod }  p^e \\
1 \text{ mod } N/p^e
\end{cases}.
$$
We then put  
$$\chi^{(p)}(n)= 
\begin{cases} \chi(m) & \text{ if }  (n,p)=1\\
0 & \text{ if } (n,p)\not=1
\end{cases}.$$
Then it is independent of the choice of $m,$ and $\chi^{(p)}$ is a character mod $p^e,$ and we have $\chi=\prod_{p|N} \chi^{(p)}.$ 

For Dirichlet characters $\chi_1,\ldots,\chi_r$ mod $N$, we define the generalized Jacobi sum $J(\chi_1,\ldots,\chi_r)$  by 
$$J(\chi_1,\ldots,\chi_r)=\sum_{a_1,\ldots,a_r \in \ZZ/N\ZZ \atop a_1+\cdots+a_r=1} \chi_1(a_1)\cdots \chi_r(a_r).
$$
We note that $J(\chi_1,\chi_2)$ is the usual Jacobi sum.
Moreover, we define the Jacobi sum 
$J_{\frkJ_2}(\chi_1,\chi_2)$ on $\frkJ_2$ by 
$$J_{\frkJ_2}(\chi_1,\chi_2)=\sum_{B \in \frkJ_2(\ZZ/N\ZZ)} \chi_1(\det B)\chi_2(1-\mathrm{Tr}(B)).$$
\begin{proposition} \label{prop.decomposition-character-sum}
\begin{itemize}
\item[(1)] Let $A \in \frkJ(\ZZ)$ and $\chi$ a Dirichlet character mod $N$. Then we have
$$h(A,\chi)=\prod_{p | N} h(A,\chi^{(p)}).$$
\item[(2)] Let $\chi_1, \ldots, \chi_r$ be Dirichlet characters mod $N$.
\begin{itemize}
\item[(2.1)] Suppose that $r \ge 3$ and that $\chi_1 \cdots  \chi_{r-1}$ is primitive. Then,
$$J(\chi_1,\ldots,\chi_r)=J(\chi_1 \cdots \chi_{r-1},\chi_r)J(\chi_1,\ldots,\chi_{r-1}),$$
\item[(2.2)] Suppose that $\chi_1 \chi_2$ is primitive. Then,
$$J(\chi_1,\chi_2)={W(\chi_1)W(\chi_2) \over W(\chi_1\chi_2)}.$$
\end{itemize}
\end{itemize}
\end{proposition}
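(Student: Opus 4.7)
The plan has two essentially independent pieces, so I would handle (1) and (2) separately.

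For part (1), I would use the Chinese Remainder Theorem. Since $\mathrm{Tr}(g\cdot A) \in \ZZ/N\ZZ$ depends only on the image of $g$ in $\calm'(\ZZ/N\ZZ)$, and since CRT gives $\calm'(\ZZ/N\ZZ) \cong \prod_{p\mid N} \calm'(\ZZ/p^{e_p}\ZZ)$, the factorization $\chi = \prod_p \chi^{(p)}$ yields
$$\chi(\mathrm{Tr}(g\cdot A)) = \prod_{p\mid N} \chi^{(p)}(\mathrm{Tr}(g \cdot A)),$$
where the $p$-th factor depends only on $g$ modulo $p^{e_p}$. Combined with surjectivity of the reduction $\calm'(\ZZ) \to \calm'(\ZZ/N\ZZ)$, this lets the coset space $\calm'(\ZZ)/\calm'(N;\ZZ)$ be identified with $\prod_p \calm'(\ZZ)/\calm'(p^{e_p};\ZZ)$, so the sum defining $h(A,\chi)$ factors as $\prod_{p\mid N} h(A,\chi^{(p)})$.

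For (2), I would first establish the following core identity, which handles both the inductive step (2.1) and the base case (2.2): if $\chi := \chi_1 \cdots \chi_{r-1}$ is primitive mod $N$, then for \emph{every} $s \in \ZZ/N\ZZ$,
$$S(s) := \sum_{\substack{a_1+\cdots+a_{r-1}=s \\ a_i \in \ZZ/N\ZZ}} \chi_1(a_1)\cdots\chi_{r-1}(a_{r-1}) \;=\; \chi(s)\, J(\chi_1,\ldots,\chi_{r-1}).$$
For $s \in (\ZZ/N\ZZ)^\times$, the substitution $a_i = s b_i$ yields this immediately. For non-unit $s$, set $d = \gcd(s,N) > 1$; the primitivity of $\chi$ produces $u \equiv 1 \pmod{N/d}$ with $\chi(u) \neq 1$. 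Since $us \equiv s \pmod N$ we have $S(us) = S(s)$, while the substitution $a_i \mapsto u a_i$ also gives $S(us) = \chi(u) S(s)$, forcing $S(s) = 0 = \chi(s) J(\chi_1,\ldots,\chi_{r-1})$.

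With this identity in hand, (2.1) follows by splitting $J(\chi_1,\ldots,\chi_r) = \sum_s \chi_r(1-s) S(s)$, substituting the identity, and recognizing the result as $J(\chi_1\cdots\chi_{r-1},\chi_r) \cdot J(\chi_1,\ldots,\chi_{r-1})$. For (2.2), the same identity (with $r-1 = 2$) applied to the Gauss sum product gives
$$W(\chi_1) W(\chi_2) = \sum_s {\bf e}(s/N)\, S(s) = J(\chi_1,\chi_2) \sum_s (\chi_1\chi_2)(s)\, {\bf e}(s/N) = J(\chi_1,\chi_2)\, W(\chi_1\chi_2),$$
and division by the nonzero Gauss sum $W(\chi_1\chi_2)$ (of absolute value $\sqrt{N}$) finishes the proof. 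The main subtle point is the vanishing $S(s) = 0$ on non-units: the individual $\chi_i$ need not be primitive, so one cannot argue termwise via standard Gauss-sum Fourier inversion on each factor; the primitivity of the product must instead be exploited globally through the scaling-by-$u$ trick sketched above.
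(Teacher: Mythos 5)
Your proof is correct, and the core mechanism is the same as the paper's: split according to whether $s=a_1+\cdots+a_{r-1}$ (equivalently $1-a_r$) is a unit, kill the non-unit part using primitivity of $\chi_1\cdots\chi_{r-1}$, and substitute $a_i=(1-a_r)b_i$ on the unit part. The packaging differs in three ways worth noting. First, the paper reduces to $N=p^m$ by CRT before doing (2.1) and (2.2), which your convolution identity $S(s)=\chi_1\cdots\chi_{r-1}(s)\,J(\chi_1,\ldots,\chi_{r-1})$ shows is unnecessary: the argument works directly mod $N$. Second, the paper merely asserts that the non-unit contribution vanishes ("since $\chi_1\cdots\chi_{r-1}$ is primitive, the first term \ldots is zero"); your scaling-by-$u$ argument, using $u\equiv 1 \pmod{N/d}$ with $\chi(u)\neq 1$, is exactly the justification that is missing there, and you are right that it must exploit primitivity of the \emph{product} globally since the individual $\chi_i$ need not be primitive. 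Third, for (2.2) the paper cites Ireland--Rosen for $m=1$ and says the general case is "the same manner," whereas you derive $W(\chi_1)W(\chi_2)=J(\chi_1,\chi_2)W(\chi_1\chi_2)$ directly from the same identity $S(s)$, which unifies the two parts under one lemma. So your version is a cleaner and more self-contained rendering of the same proof; nothing in it fails.
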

\begin{proof} The assertion (1) can be proved by the Chinese remainder theorem. 
To prove (2.1) and (2.2), again by the Chinese remainder theorem, we may assume that $N=p^m$ with $p$ a prime number.
We have
\begin{align*}
&J(\chi_1,\ldots,\chi_r)=\sum_{a_1,\ldots,a_{r-2},a_r \in \ZZ/p^m\ZZ} \chi_1(a_1)\cdots\chi_{r-2}(a_{r-2})\chi_{r-1}(1-a_1-\cdots-a_{r-2}-a_r)\chi_r(a_r) \\
&=\sum_{a_1,\ldots,a_{r-2} \in \ZZ/p^m \ZZ, v \in \ZZ/p^{m-1}\ZZ} \chi_1(a_1)\cdots \chi_{r-2}(a_{r-2})\chi_{r-1}(-pv-a_1-\cdots-a_{r-2})\chi_r(pv-1)\\
&\phantom{xxxxxx} +\sum_{a_1,\ldots,a_{r-2},a_r \in \ZZ/p^m\ZZ \atop a_r  \not\equiv 1 \text{ mod } p} \chi_1(a_1)\cdots \chi_{r-2}(a_{r-2})\chi_{r-1}(1-a_1-\cdots-a_{r-2}-a_r)\chi_r(a_r).
\end{align*}
Since $\chi_1\cdots \chi_{r-1}$ is primitive, the first term of the right-hand side of the above equation is zero. Hence, putting  $a_1=(1-a_r)b_1,\ldots,a_{r-2}=(1-a_r)b_{r-2}$ in the second term of the right-hand side of the above equation, we have 
\begin{align*}
J(\chi_1,\cdots,\chi_r)&=\sum_{b_1,\ldots,b_{r-2},a_r \in \ZZ/p^m\ZZ} 
 \chi_1((1-a_r)b_1)\cdots \chi_{r-2}((1-a_r) b_{r-2})\\
&\phantom{xxx}  \times  \chi_{r-1}(1-a_r-(1-a_r)(b_1+\cdots+b_{r-2}))\chi_r(a_r)\\
&=\sum_{b_1,\ldots,b_{r-2} \in \ZZ/p^m\ZZ } \chi_1(b_1)\cdots \chi_{r-2}(b_{r-2})\chi_{r-1}(1-b_1-\cdots -b_{r-2}) \\
&\phantom{xxx} \times \sum_{a_r \in \ZZ/p^m\ZZ} (\chi_1 \cdots \chi_{r-1})(1-a_r)\chi_r(a_r).
\end{align*}
This proves the assertion (2.1). The second assertion (2.2) is well known in the case $m=1$ (e.g.  \cite[Ch.8, Theorem 1]{IR}), and the general case can also be proved in the same manner. 
\end{proof}
Recall from the introduction, $\cald_N=\{ \eta \ | \ \eta \text{ is a Dirichlet character } mod \ N \text{ such that } \eta^l=1 \}$, where $l={\rm GCD}(3,\phi(N))$.
\begin{corollary} \label{cor.explicit-generalized-Jacobi-sum}
Let $\widetilde \chi$ be a Dirichlet character mod $N$ and put $\chi=\widetilde \chi^3$. Suppose that $N$ is odd and $\chi$ is primitive. Then, for any $\eta \in \cald_{N}$, we have
\begin{align*}
J(\overline{\widetilde \chi \eta},\overline{\widetilde \chi \eta},\overline{\widetilde \chi \eta})={W(\overline{\widetilde \chi \eta})^3 \over W(\bar \chi)}.
\end{align*}
\end{corollary}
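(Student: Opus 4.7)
The plan is to iterate Proposition \ref{prop.decomposition-character-sum} to collapse the triple Jacobi sum into Gauss sums. Set $\psi = \overline{\widetilde\chi\eta}$. Since $l = \gcd(3,\phi(N)) \in \{1,3\}$ and $\eta^l = 1$, we have $\eta^3 = 1$; consequently $\psi^3 = \overline{\widetilde\chi^3\eta^3} = \bar\chi$ and $\psi^6 = \bar\chi^2$. Provided the required primitivity hypotheses hold, Proposition \ref{prop.decomposition-character-sum}(2.1) applied with $r=3$ and $\chi_1=\chi_2=\chi_3=\psi$ yields $J(\psi,\psi,\psi) = J(\psi^2,\psi)\,J(\psi,\psi)$, and two applications of Proposition \ref{prop.decomposition-character-sum}(2.2) give
$$J(\psi,\psi) = \frac{W(\psi)^2}{W(\psi^2)}, \qquad J(\psi^2,\psi) = \frac{W(\psi^2)\,W(\psi)}{W(\psi^3)} = \frac{W(\psi^2)\,W(\psi)}{W(\bar\chi)}.$$
Multiplying these, the factor $W(\psi^2)$ cancels and produces the desired identity $J(\psi,\psi,\psi) = W(\psi)^3/W(\bar\chi)$.

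The essential nonformal step is therefore to verify the two primitivity assumptions: that both $\psi^2$ and $\psi^3$ are primitive mod $N$. The case of $\psi^3$ is immediate since $\psi^3 = \bar\chi$ and $\chi$ is primitive by hypothesis. For $\psi^2$, I first observe that under the hypotheses of the corollary the square $\chi^2$ is also primitive mod $N$: by the Chinese remainder theorem this reduces to each odd prime power $p^e \| N$, where $(\ZZ/p^e\ZZ)^\times$ is cyclic and a character is primitive mod $p^e$ iff its restriction to the order-$p$ kernel of $(\ZZ/p^e\ZZ)^\times \to (\ZZ/p^{e-1}\ZZ)^\times$ is nontrivial. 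Since squaring is a bijection on a cyclic group of odd prime order $p$, this nontriviality is preserved under squaring, so $\chi^2$ is primitive. Consequently $\psi^6 = \bar\chi^2$ has conductor $N$; since the conductor of $(\psi^2)^3$ divides that of $\psi^2$, the conductor of $\psi^2$ must be at least $N$, and hence equals $N$.

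The main, and quite mild, obstacle is this primitivity verification; once it is in place the corollary follows by the purely formal cascade in the first paragraph. The odd-$N$ hypothesis enters only to ensure that squaring does not destroy primitivity at each local component, which explains its role in the statement.
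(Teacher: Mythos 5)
Your proof follows essentially the same route as the paper's: the paper likewise factors $J(\overline{\widetilde\chi\eta},\overline{\widetilde\chi\eta},\overline{\widetilde\chi\eta})=J((\overline{\widetilde\chi\eta})^2,\overline{\widetilde\chi\eta})\,J(\overline{\widetilde\chi\eta},\overline{\widetilde\chi\eta})$ via Proposition \ref{prop.decomposition-character-sum}(2.1), evaluates both factors by (2.2), and cancels $W((\overline{\widetilde\chi\eta})^2)$, after asserting (without proof) the primitivity of $(\overline{\widetilde\chi\eta})^2$ that you verify. One small caveat: your ``order-$p$ kernel'' criterion for primitivity only applies to local components with $e\ge 2$; when $p\,\|\,N$ primitivity mod $p$ just means nontriviality, squaring need not preserve it, and you must invoke the non-quadraticity of the local component of $\chi$ (implicit in how the corollary is used) to conclude that $\chi^2$ is still primitive there --- a point the paper's own one-line assertion glosses over as well.
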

\begin{proof} By the assumption, $\chi^2$ is also primitive, and so is $\overline{\widetilde \chi^2 \eta^2}$ for any $\eta \in \cald_{N}$. Hence, by Lemma \ref{lem.explicit-generalized-Jacobi-sum}, and Proposition \ref{prop.decomposition-character-sum}, we have
\begin{align*}
J_{\frkJ_2}(\overline{\widetilde \chi \eta},\overline{\widetilde \chi \eta})&=N^4J(\widetilde \chi \eta, \widetilde \chi \eta,\widetilde \chi \eta) =N^{4}J(\overline{\widetilde \chi \eta},(\overline{\widetilde \chi\eta})^2)J(\overline{\widetilde \chi \eta},\overline{\widetilde \chi \eta})\\
&=N^{4}{W(\overline{\widetilde \chi \eta}) W((\overline{\widetilde \chi\eta})^2)\over W((\overline{\widetilde \chi \eta})^3)}{W(\overline{\widetilde \chi \eta})W(\overline{\widetilde \chi \eta}) \over W((\overline{\widetilde \chi \eta})^2)}=N^4{W(\overline{\widetilde \chi \eta})^3 \over W(\bar \chi)}.
\end{align*}
\end{proof}
\begin{remark} \label{rem.non-primitive}
For a primitive character $\widetilde \chi$ mod $2^m$, $\widetilde \chi^2$ is not primitive. 
\end{remark}

We will show in Lemma \ref{lem.explicit-generalized-Jacobi-sum} that $J_{\frkJ_2}(\chi_1,\chi_2)=N^4 J(\chi_1,\chi_1,\chi_2)$.

For a commutative ring $R$, let $S_l(R)$ denote the set of symmetric matrices of size $l$ with entries in $R$. 
For $S \in S_l(R)$ and $X \in M_{lm}(R)$, put
$S[X]={}^t X SX$. Then, $GL_l(R)$ acts on $S_l(R)$ in the following way:
\[GL_l(R) \times S_l(R)  \ni (g,S) \longmapsto S[{}^tg].\]
Let $A_1$ and $A_2$ be elements of $S_l(R)$. Then, by definition,  we have  $A_1 \sim_{GL_l(R)} A_2$ if there exists an element $g \in GL_l(R)$ such that $ A_1[{}^t g]=A_2$. (This is equivalent to saying that there exists an element $g \in GL_l(R)$ such that $ A_1[g]=A_2$.)
Let $R$ be an integral domain and $K$ its quotient field.  Suppose that the characteristic of $K$ is different from $2$. We denote by $\calh_l(R)$ the set of half-integral symmetric matrices of size $l$ over $R$, that is, the set of symmetric matrices $A=(a_{ij})$ in $S_l(K)$ such that $2a_{ij}, a_{ii} \in R$. 

Let $S \in \calh_l(\ZZ_p)$ with $l$ even and $\det S \not=0$. Then we put 
$$\chi(S)=\chi_p(S)=
\begin{cases} 1 & \text{ if $S \sim_{GL_l(\ZZ_p)} \overbrace{H \bot \cdots \bot H}^{l/2}$} \\
-1 & \text{ otherwise}
\end{cases},
$$
where $H=\begin{pmatrix} 0 & 1/2 \\ 1/2 & 0 \end{pmatrix}$. 
For $w\in \frkC_{\QQ}$, let $N(w) \in \QQ$ be the norm. Then $N$ defines a quadratic form over $\QQ$ in $8$ variables, and in particular, 
$N|\frak o$ defines an integral quadratic form. Let $\cals_N$ be the Gram matrix of $N|\frak o$
with respect to the standard basis. By construction,  $\cals_N \in \calh_8(\ZZ) \cap {1 \over 2} GL_8(\ZZ)$ and positive definite. Hence for any prime number $p$, we have
$$\cals_N \sim_{GL_8(\ZZ_p)} H \bot H \bot H \bot H.$$
Hence $\chi_p(\cals_N)=1$. 
For  $Z_1 =\begin{pmatrix} a &  b \\ \bar b & d \end{pmatrix} \in \frkJ_2(\QQ)$ and $w=\begin{pmatrix} w_1 \\ w_2 \end{pmatrix} \in M_{2,1}(\frkC_\QQ)$, $Z_1[w]:={}^t \bar wZ_1w$ is well-defined as the usual matrix multiplication.
Then 
$$Z_1[w]=aN(w_1)+dN(w_2)+\mathrm{Tr}(\bar w_1 b w_2),$$
and 
$$Q: M_{2,1}(\frkC_\QQ) \longrightarrow \Bbb Q,\quad w \mapsto Z_1[w],
$$
defines a quadratic form over $\QQ$ in $16$ variables. In particular if $Z_1 \in \frkJ_2(\ZZ)$, then $Q|_{M_{2,1}(\frak o)}$ defines an integral quadratic form.

For $A \in \calh_l(\ZZ)$ and $c \in \ZZ$, let
$$\cala_{p^m}(A,c)=\{x \in M_{l1}(\ZZ/p^m\ZZ) \ | \ A[x] \equiv c \text{ mod } p^m \},
$$
and 
$$\cala_{p^m}^{\rm prim}(A,c)=\{x \in \cala_{p^m}(A,c) \ | \ x \not\equiv 0 \text{ mod } p \},$$
and put $A_{p^m}(A,c)=\#\cala_{p^m}(A,c)$ and $A_{p^m}^{\rm prim}(A,c)=\#\cala_{p^m}^{\rm prim}(A,c)$.

\begin{lemma} \label{lem.quadratic-equation}
Let $l$ be a positive even integer. 
Let $S \in \calh_l(\ZZ) \cap {1 \over 2}GL_l(\ZZ_p)$ and $c \in \ZZ$. 
\begin{itemize}
\item[(1)] We have 
$$A_p^{\rm prim}(S,c)=\begin{cases} p^{l-1}(1-p^{-l/2}\chi(S)), & \text{ if } c \not\equiv 0 \text{ mod } p,\\
p^{(l-1)}(1-p^{-l/2}\chi(S)) (1+p^{-l/2+1}\chi(S)), & \text{ if } c \in p\ZZ.
\end{cases}$$
\item[(2)] We have 
$$A_p(S,c)=\begin{cases} p^{l-1}(1-p^{-l/2}\chi(S)), & \text{ if } c \not\equiv 0 \text{ mod } p, \\
p^{(l-1)}(1-p^{-l/2}\chi(S)) (1+p^{-l/2+1}\chi(S)) +1, & \text{ if } c \in p\ZZ.
\end{cases}
$$
\item[(3)] Suppose that $m \ge 2$. Then
$$A_{p^m}(S,c)=A^{\rm prim}(S,c)+p^lA_{p^{m-2}}(S,p^{-2}c).$$
Here we put the convention that $A_{p^{m-2}}(S,p^{-2}c)=0$ if $\ord_p(c)\le 1$, and $A_{p^{m-2}}(S,p^{-2}c)=1$ if $m=2$ and $\ord_p(c)\ge 2$.
\end{itemize}
\end{lemma}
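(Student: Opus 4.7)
The plan is to compute $A_p(S,c)$ in part (2) via a character sum, then deduce part (1) by subtracting the trivial solution $x=0$, and finally establish the recursion in (3) by stratifying the solutions according to whether they are primitive modulo $p$.

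For part (2), I would write
\[
A_p(S,c)=\frac{1}{p}\sum_{t\in\ZZ/p\ZZ}\sum_{x\in M_{l,1}(\ZZ/p\ZZ)}e_p\bigl(t(S[x]-c)\bigr)
\]
with $e_p(u)=\exp(2\pi\sqrt{-1}u/p)$, so that the $t=0$ term contributes $p^{l-1}$ and the $t\neq 0$ terms are controlled by the inner quadratic Gauss sum $G(t)=\sum_{x}e_p(tS[x])$. For $p$ odd, the hypothesis $2S\in GL_l(\ZZ_p)$ lets one diagonalize $S$ over $\FF_p$, so $G(t)$ factors into $l$ one-variable classical Gauss sums. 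Since $l$ is even, $\chi_p(t^l)=1$, and the identity $g(\chi_p,e_p)^2=\chi_p(-1)p$ gives $G(t)=\chi_p((-1)^{l/2}\det(2S))\,p^{l/2}=\chi_p(S)\,p^{l/2}$, where the last equality invokes the definition of $\chi_p(S)$ together with $\det(2H\bot\cdots\bot 2H)=(-1)^{l/2}$. Summing $e_p(-tc)$ over $t\in\FF_p^\times$ then yields (2) in both cases ($p\mid c$ and $p\nmid c$) after elementary algebra.

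For part (1), I would observe that $S[0]=0$ and the only non-primitive element of $M_{l,1}(\ZZ/p\ZZ)$ is the zero vector; hence $A_p^{\rm prim}(S,c)$ equals $A_p(S,c)$ when $p\nmid c$ and $A_p(S,c)-1$ when $p\mid c$. Substituting the formula from (2) and factoring produces the claimed product form. For part (3), I would split $\cala_{p^m}(S,c)$ into its primitive part and its complement, writing any non-primitive $x$ as $x=p\tilde y$ with $\tilde y\in M_{l,1}(\ZZ/p^{m-1}\ZZ)$; the equation $S[x]\equiv c\pmod{p^m}$ then becomes $p^2S[\tilde y]\equiv c\pmod{p^m}$, which has no solutions unless $p^2\mid c$, and otherwise reduces to $S[\tilde y]\equiv p^{-2}c\pmod{p^{m-2}}$. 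Because the reduction $M_{l,1}(\ZZ/p^{m-1}\ZZ)\to M_{l,1}(\ZZ/p^{m-2}\ZZ)$ has fibres of size $p^l$, the non-primitive count equals $p^l A_{p^{m-2}}(S,p^{-2}c)$, giving the stated recursion, with the edge cases $m=2$ and $\ord_p(c)\leq 1$ handled correctly by the stated conventions.

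The main obstacle is the case $p=2$ in part (2), where $\FF_2$ carries no nontrivial quadratic character and the Gauss sum factorization breaks down. I would handle it separately by noting that the hypothesis $2S\in GL_l(\ZZ_2)$ forces $S$ to be of even type, so its $\ZZ_2$-normal form is either $H\bot\cdots\bot H$ (when $\chi_2(S)=1$) or $H\bot\cdots\bot H\bot Y$ with $Y=\bigl(\begin{smallmatrix}1&1/2\\1/2&1\end{smallmatrix}\bigr)$ (when $\chi_2(S)=-1$); the formula in (2) is then verified by direct enumeration on each of these two normal forms, exploiting the convolution $A_2(S_1\bot S_2,c)=\sum_{c_1+c_2\equiv c\,(2)}A_2(S_1,c_1)A_2(S_2,c_2)/2$ to reduce to the rank-$2$ base cases $A_2(H,c)$ and $A_2(Y,c)$.
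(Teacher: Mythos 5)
Your argument is correct in substance, but it takes a different route from the paper on parts (1) and (2): the paper simply quotes Kitaoka's local-density results (Theorem 1.3.2, Lemma 1.3.1 and Lemma 5.5.9 of \emph{Arithmetic of quadratic forms}), whereas you give a self-contained computation via the additive-character/Gauss-sum identity for odd $p$ and a normal-form enumeration for $p=2$, then obtain (1) from (2) by removing the single non-primitive vector $x=0$. Your derivation checks out: for odd $p$ the hypothesis $2S\in GL_l(\ZZ_p)$ does let you diagonalize, $G(t)=\chi_p((-1)^{l/2}\det(2S))p^{l/2}=\chi(S)p^{l/2}$ independently of $t\ne 0$ because $l$ is even, and summing $e_p(-tc)$ reproduces both cases of (2); the identity $A_p^{\rm prim}=A_p-[\,p\mid c\,]$ then gives (1). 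For $p=2$ the reduction to the two even unimodular normal forms $H^{\bot l/2}$ and $H^{\bot(l/2-1)}\bot Y$ is legitimate (the half-integrality of $S$ together with $2S\in GL_l(\ZZ_2)$ forces even type), though note a small slip: the convolution identity should read $A_2(S_1\bot S_2,c)=\sum_{c_1+c_2\equiv c\ (2)}A_2(S_1,c_1)A_2(S_2,c_2)$ with no division by $2$ (e.g.\ $A_2(H\bot H,1)=6$, not $3$). Part (3) is argued exactly as in the paper: stratify by primitivity, write a non-primitive solution as $p\tilde y$, and use that the reduction map has fibres of size $p^l$. What your approach buys is independence from the external reference; what the paper's buys is brevity.
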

\begin{proof}
The assertion (2) follows from \cite[Theorem 1.3.2]{Ki2} and \cite[Lemma 5.5.9]{Ki2}. The assertion (1) follows from \cite[Theorem 1.3.2]{Ki2} and \cite[Lemma 1.3.1]{Ki2}.
Suppose that $m \ge 2$. Then
\[\cala_{p^m}(S,c)=A_{p^m}^{\rm prim}(S,c) \cup \cala_{p^m}'(S,c),\]
where 
\[\cala_{p^m}'(S,c) =\{x \in \cala_{p^m}(S,c) \ | \ x \equiv 0 \text{ mod } p \}.\]
Put 
\[\cala_{p^m}''(S,c) =\{x \in M_{l1}(\ZZ_p/p^{m-1}\ZZ_p) \ | \ A[x] \equiv p^{-2}c  \text{ mod } p^{m-2}.\]
Then clearly we have
\[\#\cala_{p^{m-1}}''(S,p^{-2}c) =p^l \# \cala_{p^{m-2}}(A,p^{-2}c).\]
For $x = py \in \cala_{p^m}'(A,c)$, the element $y \text{ mod } p^{m-1}$  belongs to
$\cala_{p^{m-1}}''(S,p^{-2}c)$, and the mapping $x \mapsto y \text{ mod } p^{m-1}$ gives a bijection from $\cala_{p^m}'(A,c)$ to $\cala_{p^{m-1}}''(S,p^{-2}c)$.
This proves the assertion (3).
\end{proof}

\begin{corollary} \label{cor.quadratic-equation} 
Fix $S \in \calh_l(\ZZ) \cap {1 \over 2}GL_l(\ZZ_p)$. Then, $A_{p^m}(S,c)$ depends only on $\ord_p(c)$.
\end{corollary}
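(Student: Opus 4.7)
My plan is to prove the corollary by induction on $m$, using Lemma \ref{lem.quadratic-equation}(3) as the recursive backbone together with a Hensel-lifting argument to control the primitive count. The base cases $m=0$ (trivially $A_1(S,c)=1$) and $m=1$ (by Lemma \ref{lem.quadratic-equation}(2), the value depends only on whether $c \equiv 0 \pmod p$) already reduce the dependence of $A_{p^m}(S,c)$ on $c$ to $\ord_p(c)$.

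The key intermediate claim is that for every $m \geq 1$,
\[
A_{p^m}^{\rm prim}(S,c) \;=\; p^{(m-1)(l-1)} \, A_p^{\rm prim}(S,\, c \bmod p),
\]
so that $A_{p^m}^{\rm prim}(S,c)$ depends only on whether $\ord_p(c) \geq 1$. I would prove this by a standard Hensel-lifting argument: since $S \in \tfrac{1}{2}GL_l(\ZZ_p)$, the matrix $2S$ is unimodular mod $p$, hence for any primitive $x_0 \in M_{l1}(\ZZ_p/p^{m-1}\ZZ_p)$ with $A[x_0]\equiv c \pmod{p^{m-1}}$, the vector $2Sx_0$ is primitive mod $p$. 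Writing $x = x_0 + p^{m-1} y$ and expanding $A[x] \equiv A[x_0] + 2p^{m-1}\, {}^t x_0 S y \pmod{p^m}$, the condition $A[x]\equiv c \pmod{p^m}$ reduces to a single linear congruence in $y$ modulo $p$ whose coefficient vector has a unit entry, contributing exactly $p^{l-1}$ lifts. Iterating from $m=1$ and invoking Lemma \ref{lem.quadratic-equation}(1) gives the displayed formula, whose right-hand side depends only on $c \bmod p$.

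For the inductive step with $m \geq 2$, assuming the corollary for all $m' < m$, apply Lemma \ref{lem.quadratic-equation}(3):
\[
A_{p^m}(S,c) \;=\; A_{p^m}^{\rm prim}(S,c) + p^l \, A_{p^{m-2}}(S, p^{-2}c).
\]
The first summand depends only on $\ord_p(c)$ by the intermediate claim. For the second: if $\ord_p(c) \leq 1$ the convention makes it $0$; if $\ord_p(c) \geq 2$ and $m > 2$, the induction hypothesis applied with valuation $\ord_p(c) - 2$ handles it; and if $\ord_p(c) \geq 2$ and $m = 2$, the convention assigns the constant value $p^l$. In every case both summands depend only on $\ord_p(c)$.

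I expect the main obstacle to be the Hensel-lifting step, where some care is required to use $2S$ rather than $S$ as the unimodular quantity (so that the argument remains valid at $p = 2$), and to keep track of the edge-case conventions attached to Lemma \ref{lem.quadratic-equation}(3) when iterating the recursion through the special values $\ord_p(c) \in \{0,1\}$ and $m \in \{0,1,2\}$. Everything else is routine bookkeeping on the induction.
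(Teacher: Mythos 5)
Your proof is correct. The paper offers no proof of this corollary at all: it is stated as an immediate consequence of Lemma \ref{lem.quadratic-equation}, and your argument is the natural way to make that rigorous (induction on $m$ through the recursion of part (3), with the level-$p$ cases of parts (1)--(2) as base cases). The one ingredient not literally contained in the lemma is your intermediate claim $A_{p^m}^{\rm prim}(S,c)=p^{(m-1)(l-1)}A_p^{\rm prim}(S,c)$ for $m\ge 2$; your Hensel-lifting proof of it is sound, and your care in working with the unimodular matrix $2S$ (so that the linear form $y\mapsto {}^t x_0(2S)y$ is integral and surjective onto $\ZZ/p\ZZ$ for primitive $x_0$) is exactly what is needed for the argument to hold at $p=2$ as well. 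This stabilization of the primitive counts is the point the authors leave implicit (it is subsumed in their citation of Kitaoka), so you have supplied the missing detail rather than taken a different route.
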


\begin{lemma} \label{lem.quadratic-sum}
Let $l,S$ and $c$ be as in Lemma \ref{lem.quadratic-equation}.
Let $\eta$ be a primitive character mod $p^m.$ Put
$$I_{\eta,S,c}=\sum_{{\bf w} \in (\ZZ/p^m\ZZ)^l} \eta(S[^{t}{\bf w}]+c).$$
  Then
$$I_{\eta,S,c}=p^{lm/2} \eta(c) \times \begin{cases}
\chi(S) & \text{ if } m \text{ is odd}, \\
1 & \text{ if } m \text{ is even.}
\end{cases}
$$
\end{lemma}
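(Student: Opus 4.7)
The approach is to expand $\eta$ using the Gauss sum identity for primitive characters, reducing $I_{\eta,S,c}$ to a classical Gauss sum attached to the unimodular quadratic form $S[\cdot]$ over $\ZZ_p$, which one then evaluates directly.

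Since $\eta$ is primitive modulo $p^m$, for every $y \in \ZZ/p^m\ZZ$ (with the convention $\eta(y)=0$ if $p\mid y$),
$$\eta(y)\,W(\bar\eta) \;=\; \sum_{a\,(\mathrm{mod}\,p^m)} \bar\eta(a)\,{\bf e}(ay/p^m).$$
Apply this with $y = S[{}^t{\bf w}]+c$ and interchange the order of summation to obtain
$$I_{\eta,S,c} \;=\; \frac{1}{W(\bar\eta)} \sum_{a\,(\mathrm{mod}\,p^m)} \bar\eta(a)\,{\bf e}(ac/p^m)\,G_a, \qquad G_a := \sum_{{\bf w} \in (\ZZ/p^m\ZZ)^l} {\bf e}\bigl(aS[{}^t{\bf w}]/p^m\bigr).$$
Primitivity of $\eta$ forces $\bar\eta(a)=0$ whenever $(a,p)>1$, so only units $a$ contribute to the outer sum.

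The heart of the argument is the evaluation of $G_a$ for $(a,p)=1$. Because $2S \in GL_l(\ZZ_p)$, the quadratic form attached to $aS$ is unimodular over $\ZZ_p$. For $p$ odd one diagonalises $S$ over $\ZZ_p$ and reduces to a product of one-variable Gauss sums $\sum_x{\bf e}(bx^2/p^m)$, which are well-known; for $p=2$ one instead uses the $\ZZ_2$-Jordan decomposition of $S$ into hyperbolic and anisotropic rank-two blocks. In either case the classical computation gives
$$G_a \;=\; p^{lm/2}\cdot \begin{cases} \chi_p(aS), & m \text{ odd},\\ 1, & m \text{ even}.\end{cases}$$
Since $l$ is even and $a \in \ZZ_p^\times$, rescaling $S$ by $a$ does not change its equivalence class in $\calh_l(\ZZ_p)\cap \tfrac12 GL_l(\ZZ_p)$ modulo split hyperbolic forms, so $\chi_p(aS)=\chi_p(S)$. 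Thus $G_a$ is in fact constant on $(\ZZ/p^m\ZZ)^\times$.

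Factoring this constant out and applying the primitive-character Gauss sum identity once more in the form $\sum_a \bar\eta(a){\bf e}(ac/p^m)=\eta(c)W(\bar\eta)$, we obtain
$$I_{\eta,S,c} \;=\; p^{lm/2}\,\eta(c)\cdot\begin{cases}\chi_p(S), & m\text{ odd},\\ 1, & m\text{ even},\end{cases}$$
as desired. The main technical obstacle is the explicit evaluation of $G_a$ — particularly the dyadic case $p=2$, where $S$ need not diagonalise over $\ZZ_2$ and one must handle the hyperbolic and anisotropic Jordan blocks separately; but the hypothesis $2S \in GL_l(\ZZ_p)$ ensures the product formula for $G_a$ cleanly produces the uniform answer stated above.
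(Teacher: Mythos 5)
Your argument is correct, but it follows a genuinely different route from the paper's. You expand $\eta$ additively via the Gauss-sum identity for primitive characters and reduce everything to the local quadratic Gauss sum $G_a=\sum_{{\bf w}}{\bf e}\bigl(aS[{}^t{\bf w}]/p^m\bigr)$, whose value $p^{lm/2}$ (resp.\ $p^{lm/2}\chi(aS)$) for $m$ even (resp.\ odd) is classical; the invariance $\chi(aS)=\chi(S)$ for units $a$ (most cleanly seen from $aH\sim_{GL_2(\ZZ_p)}H$ via $\mathrm{diag}(a,1)$, so $S\sim H^{\bot l/2}$ iff $aS\sim H^{\bot l/2}$) lets you pull $G_a$ out of the outer sum and reassemble $\eta(c)W(\bar\eta)$, which also makes the vanishing for $p\mid c$ immediate. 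The paper instead works multiplicatively: it writes $I_{\eta,S,c}=\sum_u\eta(u)A_{p^m}(S,u-c)$ using the solution-counting function of Lemma \ref{lem.quadratic-equation}, uses primitivity of $\eta$ to annihilate the contributions that are constant on residue classes, and then descends through the recursion $A_{p^m}(S,c)=A_{p^m}^{\rm prim}(S,c)+p^lA_{p^{m-2}}(S,p^{-2}c)$ (together with Corollary \ref{cor.quadratic-equation}) to a base case quoted from \cite[Lemma 5.3]{Kat2}. Your approach buys independence from the counting formulas and a more transparent appearance of $\eta(c)$; its cost is that the entire difficulty is concentrated in the evaluation of $G_a$, which you only sketch --- in particular the dyadic case, where $S$ need not diagonalize and one must compute the Gauss sums of the blocks $H$ and $\begin{pmatrix}1&1/2\\1/2&1\end{pmatrix}$ separately (e.g.\ the latter gives $-2^m$ for $m$ odd and $+2^m$ for $m$ even, consistent with $\chi=-1$). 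That computation is standard, but to make the proof complete you should either carry it out or cite a precise reference, exactly as the paper does for its own base case.
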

\begin{proof} Suppose that $m \ge 2$. Then, by Lemma \ref{lem.quadratic-equation}, we have 
\begin{align*}
I_{\eta,S,c}&=\sum_{u \in \ZZ/p^m\ZZ} \eta(u) A_{p^m}(S,u-c)\\
&=p^{l-1}(1-p^{-l/2}\chi(S))\sum_{u \in \ZZ/p^m\ZZ\atop u-c \not\equiv 0 \text{ mod } p} \eta(u)\\
&+\sum_{u \in \ZZ/p^m\ZZ\atop u-c \equiv 0 \text{ mod } p}\Big(p^{l-1}(1-p^{-l/2}\chi(S))(1+p^{-l/2+1}\chi(S))+p^lA_{p^{m-2}}(S,p^{-2}(u-c)\Big)\eta(u).
\end{align*}
Since $\eta$ is primitive, we have
\begin{align*}
I_{\eta,S,c}=p^l \sum_{u \in \ZZ/p^m\ZZ\atop u-c \equiv 0 \text{ mod } p^2} A_{p^{m-2}}(S,p^{-2}(u-c))\eta(u)=p^l \sum_{v \in \ZZ/p^{m-2}\ZZ} A_{p^{m-2}}(S,v) \eta(c+p^2v).
\end{align*}
Suppose that $c \in p\ZZ$. Then $\eta(c+p^2v)=0$ for any $v$, and therefore $I_{\eta,S,c}=0$.
Suppose that $c \not\in p\ZZ$. 
For $r \le m/2$, put
$$L(r)=\sum_{v \in \ZZ/p^{m-2r}\ZZ} A_{p^{m-2r}}(S,v) \eta(c+p^{2r}v).$$
Then, $I_{\eta,S,c}=p^l L(1)$. Using the same argument as above and by Corollary \ref{cor.quadratic-equation}, we have 
$$L(r)=p^lL(r+1)$$
if $2r+2 \le m$. Hence we have
$$I_{\eta,S,c}=\begin{cases} p^{ml/2} L(m/2) & \text{ if } m \text{ is } even, \\
p^{(m-1)l/2} L((m-1)/2) & \text{ if } m \text{ is odd.}
\end{cases}
$$
By definition, we have $L(m/2)=\eta(c)$. The relation $L((m-1)/2)=p^{l/2}\chi(S))\eta(c)$ follows from  \cite[Lemma 5.3]{Kat2} in the case $p$ is odd, and it can also be proved in the case $p=2$ in the same way.  
This proves the assertion.
\end{proof}

\begin{lemma} \label{lem.explicit-generalized-Jacobi-sum}
Let $\chi$ and $\eta$ be  Dirichlet characters mod $N$. Suppose that $\chi$ is primitive. Then
$$J_{\frkJ_2}(\chi,\eta)=N^4J(\chi,\chi,\eta).$$ 
\end{lemma}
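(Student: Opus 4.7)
The plan is to unpack the definition. Writing $B = \begin{pmatrix} a & x \\ \bar x & d \end{pmatrix} \in \frkJ_2(\ZZ/N\ZZ)$, we have $\det B = ad - N(x)$ and $\mathrm{Tr}(B) = a+d$, so
$$J_{\frkJ_2}(\chi,\eta) = \sum_{a,d \in \ZZ/N\ZZ} \eta(1-a-d) \sum_{x \in \frko/N\frko} \chi(ad - N(x)).$$
First I would reduce to the prime-power case $N = p^m$ by the Chinese Remainder Theorem. Indeed $\frko/N\frko \simeq \prod_{p \mid N} \frko/p^{m_p}\frko$ and $\ZZ/N\ZZ \simeq \prod_{p \mid N} \ZZ/p^{m_p}\ZZ$, so both sides of the claimed identity decompose as products of the corresponding local quantities, and each local component $\chi^{(p)}$ remains primitive of modulus $p^{m_p}$.

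With $N = p^m$, the next step is to identify the inner sum with a quadratic Gauss-type sum. In the standard basis of $\frko$ one has $N(x) = \cals_N[\vec w]$ where $\cals_N \in \calh_8(\ZZ)$ is the Gram matrix of the norm form, hence
$$\sum_{x \in \frko/p^m \frko} \chi(ad - N(x)) = \chi(-1)\, I_{\chi,\cals_N,-ad}$$
in the notation of Lemma \ref{lem.quadratic-sum}. Now $\cals_N$ is locally equivalent to $H\bot H\bot H\bot H$ at every prime (as noted just before Lemma \ref{lem.quadratic-equation}), so $\chi_p(\cals_N) = 1$. Consequently Lemma \ref{lem.quadratic-sum} with $l=8$ yields $I_{\chi,\cals_N,-ad} = p^{4m}\chi(-ad) = N^4 \chi(-ad)$ in both parities of $m$, and the inner sum collapses to $\chi(-1)^2 N^4 \chi(ad) = N^4 \chi(ad)$.

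Assembling the pieces,
$$J_{\frkJ_2}(\chi,\eta) = N^4 \sum_{a,d \in \ZZ/N\ZZ} \chi(a)\chi(d)\, \eta(1-a-d) = N^4 J(\chi,\chi,\eta),$$
the last equality being the substitution $(a_1,a_2,a_3) = (a,d,1-a-d)$ into the definition of the generalized Jacobi sum. The only real subtlety is recognizing the inner sum as an instance of Lemma \ref{lem.quadratic-sum} and verifying that the ``correction factor'' $\chi_p(\cals_N)$ is trivial; the fact that $\cals_N$ is everywhere split is what makes the answer as clean as $N^4 J(\chi,\chi,\eta)$ with no extra local factors, and no new Gauss-sum computation is needed beyond Lemma \ref{lem.quadratic-sum}.
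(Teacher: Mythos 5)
Your proposal is correct and follows essentially the same route as the paper: Chinese Remainder Theorem reduction to $N=p^m$, then evaluation of the inner sum over $x\in\frko/p^m\frko$ via Lemma \ref{lem.quadratic-sum}, using that $\cals_N$ is $GL_8(\ZZ_p)$-equivalent to $H\bot H\bot H\bot H$ so that no extra local factor appears. The paper is terser (it absorbs the sign by applying the lemma directly to the form $-N(z_{12})$ rather than tracking $\chi(-1)$ separately), but the substance is identical.
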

\begin{proof}
By Chinese remainder theorem, we may assume that $N=p^m$ with $p$ a prime number.
By definition,
\begin{align*}
&J_{\frkJ_2}(\chi,\eta)=\sum_{(z_{11},z_{22}) \in (\ZZ/p^m\ZZ)^2} \sum_{
z_{12} \in \frko/p^m \frko} \chi(-N(z_{12})+z_{11}z_{22})\eta(1-z_{11}-z_{22}).
\end{align*}
Here, the norm $N(z_{12})$ can be regarded as a quadratic form over $\ZZ/p^m\ZZ$ of rank $8$ with determinant $1$. Hence, by Lemma \ref{lem.quadratic-sum}, we have
\begin{align*}
&J_{\frkJ_2}(\chi,\eta)=\sum_{(z_{11},z_{22}) \in (\ZZ/p^m\ZZ)^2} p^{4m}\chi(z_{11}z_{22})\eta(1-z_{11}-z_{22}).
\end{align*}
This proves the assertion. 
\end{proof}

For a positive integer $N$, let
$d_N=N^{64}\prod_{p|N} (1-p^{-2})(1-p^{-6})(1-p^{-8})(1-p^{-12})$ as in Theorem \ref{th.explicit-KM-1}.
For $T \in \frkJ(\ZZ/N\ZZ)$ and a positive integer $N$, let 
$$\phi_{T,N}: \calm'(\ZZ/N\ZZ) \ni g \mapsto g \cdot T \in \frkJ(\ZZ/N\ZZ).$$
\begin{lemma} \label{lem.number-of-fiber}
Let $p$ be a prime number and  $m$ a positive integer.
Let $T, S \in \frkJ(\ZZ/p^m\ZZ)$ such that $\det T \in (\ZZ/p^m\ZZ)^\times$.
 Then $\phi_{T,p^m}^{-1}(S) \not=\emptyset$ if and only if $\det S=\det T$. 
Moreover
$$\#(\phi_{T,p^m}^{-1}(S))=p^{52m}(1-p^{-2})(1-p^{-6})(1-p^{-8})(1-p^{-12}).$$
\end{lemma}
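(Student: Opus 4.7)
The plan is to apply the orbit--stabilizer theorem once transitivity of $\calm'(\ZZ/p^m\ZZ)$ on the level set $\{S \in \frkJ(\ZZ/p^m\ZZ) : \det S = \det T\}$ is established. Necessity of $\det S = \det T$ is immediate from the definition of $\calm'$ as the determinant-preserving group scheme. For the count, once transitivity is known, the fiber $\phi_{T,p^m}^{-1}(S)$ is a coset of the stabilizer $\calu_T(\ZZ/p^m\ZZ)$, so the problem reduces to computing $|\calu_T(\ZZ/p^m\ZZ)|$.

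To prove transitivity I would first work over $\FF_p$. The split group scheme $\calm'$ (of type $E_{6}$) acts on the $27$-dimensional exceptional Jordan algebra with open dense orbit $\{\det \ne 0\}$, and by the classical Springer--Veldkamp classification of orbits, two elements with non-zero determinant lie in the same $\calm'(\FF_p)$-orbit if and only if they have the same cubic norm. Next I would lift step by step to $\ZZ/p^m\ZZ$. The orbit map $\pi_T:\calm' \to \frkJ$, $g \mapsto g\cdot T$, has differential at the identity whose kernel is $\mathrm{Lie}(\calu_T)$, a Lie algebra of type $F_{4}$ of dimension $52$; thus the image has dimension $78-52=26$, matching the codimension of $\det$. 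Consequently, when $\det T$ is a unit, $\pi_T$ is a smooth morphism onto the scheme $\{\det = \det T\}$, and Hensel's lemma allows us to lift a solution $g_1\cdot T \equiv S \pmod p$ successively to $g_m \in \calm'(\ZZ/p^m\ZZ)$ with $g_m \cdot T = S$.

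Having transitivity, each nonempty fiber has cardinality $|\calu_T(\ZZ/p^m\ZZ)|$. Since $\det T \in (\ZZ/p^m\ZZ)^\times$, the stabilizer group scheme $\calu_T$ is smooth over $\ZZ_p$ of relative dimension $52$, so
\[|\calu_T(\ZZ/p^m\ZZ)|=p^{52(m-1)}|\calu_T(\FF_p)|.\]
Using the standard order formula for the split $F_{4}(\FF_p)$,
\[|\calu_T(\FF_p)|=p^{24}(p^2-1)(p^6-1)(p^8-1)(p^{12}-1)=p^{52}(1-p^{-2})(1-p^{-6})(1-p^{-8})(1-p^{-12}),\]
one arrives at the claimed value $p^{52m}(1-p^{-2})(1-p^{-6})(1-p^{-8})(1-p^{-12})$. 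The main obstacle is the transitivity claim: although classical, a statement adapted to the integral split model used here requires some care, and the smoothness/Hensel lift needs the unit hypothesis on $\det T$ in an essential way. As a cleaner alternative, one can deduce both transitivity and the fiber count simultaneously from the identity $\sum_{S} |\phi_{T,p^m}^{-1}(S)| = |\calm'(\ZZ/p^m\ZZ)|$ together with the count of $S$'s of fixed unit determinant, which is implicit in the local density computations recalled in Section 5.
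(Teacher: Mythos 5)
Your proposal is correct and follows essentially the same route as the paper: the necessity of $\det S=\det T$, the orbit--stabilizer reduction of the fiber count to $\#\calu_T(\ZZ/p^m\ZZ)$, and the order formula for that group. The only difference is that the paper simply cites the companion paper [K-K-Y] (Lemma 3.1 (2.2) for transitivity on the level set, Corollary 6.2 (1) for the group order), whereas you sketch standalone arguments (Springer--Veldkamp plus Lang's theorem over $\FF_p$, smoothness of the orbit map and Hensel lifting, and the order of split $F_4(\FF_p)$) for those two inputs.
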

\begin{proof} The proof will be given after Proposition \ref{prop.reinterpolation-of-local-density}.
\end{proof}
The following lemma is easy to prove.
\begin{lemma} \label{lem.existence-of-third-root-of-character}
Let $l=\mathrm{GCD}(3,\phi(N))$ and let $u_0$ be a primitive $l$-th root of unity mod $N$. Let $\chi$ be a Dirichlet character mod $N$ and suppose that $\chi(u_0)=1$. Then there exists a Dirichlet character $\widetilde \chi$ such that $\chi=\widetilde  \chi^3$.
\end{lemma}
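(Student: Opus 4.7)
The plan is to pass to the character group. Write $G := (\ZZ/N\ZZ)^\times$, so that Dirichlet characters mod $N$ are precisely the elements of $\widehat G$. The cubing map $\widetilde\chi \mapsto \widetilde\chi^3$ on $\widehat G$ is Pontryagin-dual to the cubing map $g \mapsto g^3$ on $G$; hence its image consists exactly of those characters that are trivial on the kernel $G[3] := \{g \in G : g^3 = 1\}$ of cubing on $G$. The lemma therefore reduces to showing that the hypothesis $\chi(u_0) = 1$ forces $\chi|_{G[3]} \equiv 1$.

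If $l = 1$, then $3 \nmid |G|$, $G[3]$ is trivial, the hypothesis is vacuous, and every $\chi$ admits a (unique) cube root (cubing is already a bijection on $\widehat G$, so one may take $\widetilde\chi := \chi^m$ with $3m \equiv 1 \pmod{|G|}$). Suppose now $l = 3$. I would use the Chinese remainder decomposition $G \cong \prod_{p \mid N}(\ZZ/p^{e_p}\ZZ)^\times$: the factor at $p = 2$ is a $2$-group and has no $3$-torsion, while each odd prime-power factor is cyclic and contributes either $\{1\}$ or a unique copy of $C_3$ to $G[3]$. In the setting of the lemma, $u_0$ is chosen as a generator of $G[3]$, and the hypothesis $\chi(u_0) = 1$ then translates exactly into the triviality of $\chi$ on $G[3]$.

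Once this is established, the cube root $\widetilde\chi$ is constructed by descending $\chi$ to a character of $G/G[3]$, which is isomorphic to the subgroup $G^3 = \{g^3 : g \in G\}$ via the map induced by cubing, and then extending this character from $G^3$ back to $G$ by the standard extension theorem for characters of subgroups of a finite abelian group. The main point requiring care is the identification $\langle u_0 \rangle = G[3]$, i.e.\ that the chosen $u_0$ generates the full $3$-torsion; once this is in hand the rest of the proof is a direct application of Pontryagin duality.
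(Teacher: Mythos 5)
The paper offers no argument for this lemma (it is introduced with ``The following lemma is easy to prove''), so there is no route to compare yours against; your duality framework --- $\chi$ is a cube in $\widehat G$, $G=(\ZZ/N\ZZ)^\times$, if and only if $\chi$ is trivial on the $3$-torsion subgroup $G[3]$, followed by descent to $G/G[3]\cong G^3$ and extension back to $G$ --- is the natural mechanism and is surely what the authors have in mind.

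The gap is precisely the point you flag at the end: the identification $\langle u_0\rangle=G[3]$. A primitive $l$-th root of unity mod $N$ is merely an element of order $l=3$ in $G$, and your own CRT analysis shows $G[3]\cong(\ZZ/3\ZZ)^r$, where $r$ is the number of primes $p\mid N$ with $3\mid\phi(p^{e_p})$ ($p^{e_p}$ the exact power of $p$ in $N$). When $r\ge 2$ this group is not cyclic, no single $u_0$ generates it, and $\chi(u_0)=1$ does not force $\chi|_{G[3]}\equiv 1$; indeed the statement itself then fails as literally written. For example, take $N=91$, let $u_0\equiv 2\pmod 7$ and $u_0\equiv 1\pmod{13}$ (an element of order $3$ mod $91$), and let $\chi=\chi_7\chi_{13}$ with $\chi_7$ trivial and $\chi_{13}$ of order $3$ mod $13$: then $\chi(u_0)=1$, but $\chi_{13}$ is not a cube in $\widehat{(\ZZ/13\ZZ)^\times}\cong\ZZ/12\ZZ$, so $\chi$ is not a cube. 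Your argument (and the lemma) is correct when $N$ is a prime power --- the case actually invoked in Theorem \ref{th.explicit-character-sum} --- and for general $N$ the hypothesis one really needs, and the one the paper implicitly uses through the factorization $\chi=\prod_{p\mid N}\chi^{(p)}$ in Corollary \ref{cor.explicit-character-sum}, is that $\chi(u)=1$ for \emph{every} $u$ with $u^3\equiv 1\pmod N$, i.e.\ $\chi|_{G[3]}\equiv 1$. Either restrict to $N=p^m$ or strengthen the hypothesis to triviality on all of $G[3]$; with that repair the remainder of your proof goes through verbatim.
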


\begin{theorem} \label{th.explicit-character-sum}
Let $\chi$ be a primitive character mod $p^m$. Suppose that $\chi$ is not a quadratic character. 
Let $l=\mathrm{GCD}(3,\phi(p^m))$ and let $u_0$ be a primitive $l$-th root of unity mod $p^m$.
\begin{itemize}
\item [(1)] Suppose that $\chi(u_0) \not=1$. Then $h(A,\chi)=0$.
\item [(2)] Suppose that $\chi(u_0)=1$. Fix a character $\widetilde \chi$ such that $\widetilde \chi^3=\chi$. Then
$$h(A,\chi)=d_{p^m} \sum_{\eta \in \cald_{p^m}} (\widetilde \chi \eta)(\det A)J(\overline{\widetilde \chi \eta},\overline{\widetilde \chi \eta}, \overline{\widetilde \chi \eta}).
$$
In particular if $p$ is odd, then
$$h(A,\chi)=d_{p^m} \sum_{\eta \in \cald_{p^m}} (\widetilde \chi \eta)(\det A){W(\overline{\widetilde \chi \eta})^3 
\over W(\bar \chi)}.
$$
\end{itemize}
\end{theorem}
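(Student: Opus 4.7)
The plan is to reduce $h(A,\chi)$ to a character sum over $\calm'(\ZZ/p^m\ZZ)$ via strong approximation for the simply connected group $\calm'$, and then exploit the invariants of the $\calm'$-action on $\frkJ$. For (1), observe that when $l=3$ the scalar $u_0 I$ lies in $\calm'(\ZZ/p^m\ZZ)$ because $\det(u_0 I \cdot X) = u_0^3 \det X = \det X$, and by strong approximation one can choose $\widetilde g \in \calm'(\ZZ)$ with $\widetilde g \equiv u_0 I \pmod{p^m}$. Substituting $g \mapsto g\widetilde g$ in the defining sum and using $\mathrm{Tr}(\widetilde g \cdot A) \equiv u_0 \mathrm{Tr}(A) \pmod{p^m}$ yields $h(A,\chi) = \chi(u_0) h(A,\chi)$, forcing $h(A,\chi) = 0$ whenever $\chi(u_0)\neq 1$.

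For (2), fix $\widetilde\chi$ as in Lemma~\ref{lem.existence-of-third-root-of-character} and first assume that $\det A \in (\ZZ/p^m\ZZ)^\times$; the case $p \mid \det A$ will be handled by the same analysis with both sides vanishing. Lemma~\ref{lem.number-of-fiber} gives
\[ h(A,\chi) = |\calu_A(\ZZ/p^m\ZZ)| \cdot \Sigma(\det A), \qquad \Sigma(c) := \sum_{S \in \frkJ(\ZZ/p^m\ZZ),\, \det S = c} \chi(\mathrm{Tr}\,S), \]
with $|\calu_A(\ZZ/p^m\ZZ)| = p^{52m}(1-p^{-2})(1-p^{-6})(1-p^{-8})(1-p^{-12})$, so that $d_{p^m}/|\calu_A(\ZZ/p^m\ZZ)| = p^{12m}$. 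To evaluate $\Sigma(c)$, I Fourier-decompose $\mathbf{1}_{\det S = c}$ via additive characters $t \in \ZZ/p^m\ZZ$ and use the block decomposition of $S$ with diagonal entries $a,b,c'$ and off-diagonal Cayley entries $x,y,z \in \frko/p^m\frko$, so that $\det S = abc' - aN(z) - bN(y) - c'N(x) + \mathrm{Tr}(xz\bar y)$. Each of the three integrations against the unimodular rank-$8$ norm form on $\frko/p^m\frko$ contributes $p^{4m}$ in the generic range; combined with the $t$-integration one obtains $\Sigma(c) = p^{12m} F(c)$, where $F(c) := \sum_{abc'=c} \chi(a+b+c')$ is the diagonal character sum.

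The identification $F(c) = \sum_{\eta \in \cald_{p^m}} (\widetilde\chi\eta)(c)\, J(\overline{\widetilde\chi\eta}, \overline{\widetilde\chi\eta}, \overline{\widetilde\chi\eta})$ is then a purely abelian computation: expanding $J(\overline{\widetilde\chi\eta}, \overline{\widetilde\chi\eta}, \overline{\widetilde\chi\eta}) = \sum_{a_1+a_2+a_3=1} \overline{\widetilde\chi\eta}(a_1 a_2 a_3)$, interchanging the order of summation, and applying the orthogonality relation $\sum_{\eta \in \cald_{p^m}} \eta(u) = l \cdot \mathbf{1}_{u \text{ is an } l\text{-th power}}$, the right-hand side collapses to $\sum_{(a_i,\tau):\, a_1+a_2+a_3=1,\, \tau^3 = c/(a_1 a_2 a_3)} \chi(\tau)$. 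Via the bijection $b_i := \tau a_i$ onto $\{(b_1,b_2,b_3) : b_1 b_2 b_3 = c\}$, this equals $F(c)$ since $\chi(\tau) = \chi(b_1+b_2+b_3)$. The ``In particular'' assertion for odd $p$ is then immediate from Corollary~\ref{cor.explicit-generalized-Jacobi-sum}.

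The main technical obstacle will be the Gaussian integration in the preceding paragraph: in the non-generic range (when one of $a,b,c',t$ fails to be a unit mod $p$, or when $N(z)-bc'$ is divisible by $p$) the Gaussian factors deviate from $p^{4m}$, and the resulting exceptional contributions to $\Sigma(c)$ must be shown to cancel. I expect this cancellation to follow from the primitivity of $\chi$ (which kills incomplete character sums at proper sub-moduli) combined with the non-quadratic hypothesis used to ensure that $\chi^2$ remains primitive, so that the auxiliary Jacobi-sum manipulations via Lemma~\ref{lem.explicit-generalized-Jacobi-sum} remain valid.
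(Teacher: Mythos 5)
Your part (1) is correct and in fact cleaner than the paper's: acting by the scalar $u_0 I\in\calm'(\ZZ/p^m\ZZ)$ (legitimate since $u_0^3\equiv 1$) immediately gives $h(A,\chi)=\chi(u_0)h(A,\chi)$, whereas the paper reaches the same conclusion only after passing to the fiber-counting reformulation. Your closing abelian identity
$F(c)=\sum_{\eta\in\cald_{p^m}}(\widetilde\chi\eta)(c)J(\overline{\widetilde\chi\eta},\overline{\widetilde\chi\eta},\overline{\widetilde\chi\eta})$,
with $F(c)=\sum_{b_1b_2b_3=c}\chi(b_1+b_2+b_3)$, is also correct and is an efficient repackaging of the paper's Proposition \ref{prop.decomposition-character-sum} and Lemma \ref{lem.explicit-generalized-Jacobi-sum}. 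But there are two genuine gaps. First, the case $p\mid\det A$ cannot be ``handled by the same analysis'': Lemma \ref{lem.number-of-fiber} is stated (and proved) only for $\det T\in(\ZZ/p^m\ZZ)^\times$, so your identity $h(A,\chi)=\#\calu_A(\ZZ/p^m\ZZ)\cdot\Sigma(\det A)$ is simply unavailable there, and the orbit of $A$ is no longer cut out by the determinant alone. The paper disposes of this case by a separate scaling argument (conjugating by $\theta(1,1,\xi_0^{-3})$ and a scalar $g_0$ with $\nu(g_0)=\xi_0^6$ to get $h(A,\chi)=\chi(\xi_0^2)h(A,\chi)$), and this is precisely where the hypothesis that $\chi$ is \emph{not quadratic} enters. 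Your proposal never uses that hypothesis in a load-bearing way, which is a sign something is missing.

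Second, and more seriously, the identity $\Sigma(c)=p^{12m}F(c)$ is the entire computational content of part (2), and you have only verified it ``in the generic range.'' The cubic coupling $\mathrm{Tr}((xz)\bar y)$ prevents three independent rank-$8$ Gaussian integrations: one must complete the square in $y$ (requiring $tb$ to be a unit), then in $x$ (requiring $t(c'-N(z)/b)$ to be a unit), then in $z$ (requiring $ta$ to be a unit), and the degenerate strata — non-unit $t$ giving incomplete determinant congruences mod $p^{m'}$, $m'<m$, together with non-unit diagonal entries and degenerate quadric fibers — produce contributions whose cancellation is exactly the hard part. ``Primitivity kills incomplete sums'' is not enough as stated, because after the $t$-summation the trace variable $a+b+c'$ is not summed over a full residue system on these strata. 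The paper's route exists to avoid this: it first trades the multiplicative constraint $\det S=\det A$ and the additive character $\chi(\mathrm{Tr}\,S)$ for a single multiplicative character $\overline{\widetilde\chi\eta}(\det(\cdot))$ via the normalization $\widetilde\cals_{p^m}(A,c)$ and cube-detection over $\cald_{p^m}$, and then evaluates one rank-$16$ \emph{multiplicative} Gauss sum $\sum_w\overline{\widetilde\chi\eta}(-\mathrm{Ad}(Z_1)[w]+\det(Z_1)(1-\mathrm{Tr}(Z_1)))$ by Lemma \ref{lem.quadratic-sum}, whose hypotheses make the degenerate $Z_1$ (singular mod $p$) vanish cleanly. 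If you want to keep your additive-character strategy, you must either carry out the degenerate-strata bookkeeping in full or reorganize as the paper does; as written the proof is not complete.
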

\begin{proof}
The case $\chi=1$ is trivial. Suppose that $\chi \not=1$.

First, suppose that $\det A \in p\ZZ$. We may assume that $A=A_0 \bot p A_1$ with $A_0 \in \frkJ_2(\ZZ/p^m\ZZ)$ and $A_1 \in \frkC(\ZZ/p^m\ZZ)$.
First let $p$ be odd. Then there exists an element $\xi_0 \in \ZZ/p^m\ZZ$ such that $\xi_0 \equiv 1 \text{ mod } p^{m-1}$ and $\chi(\xi_0)^2\not=1$.  Indeed,
if $m=1$, this follows just from the assumption. If $m \ge 2$, since $\chi$ is primitive mod $p^m$, there exists an element $\xi_0$ such that $\xi_0 \equiv 1 \text{ mod } p^{m-1}$ and $\chi(\xi_0) \not=1$.  Then we have $\xi_0^p \equiv 1 \text{ mod } p^m$, and hence we can easily see that $\chi(\xi_0)^2 \not=1$.
We have  $\theta(1,1,\xi_0^{-3})\cdot A=A$ and $\nu(\theta(1,1,\xi_0^3)=\xi_0^{-6}$. Define $g_0 \in \calm(\ZZ/p^m\ZZ)$ as
$$g_0:\frkJ(\ZZ/p^m\ZZ) \ni  (t_{ij}) \mapsto (\xi_0^2t_{ij}) \in \frkJ(\ZZ/p^m\ZZ).$$
Then, $\nu(g_0) =\xi_0^6$, and hence $g_0g\theta(1,1,\xi_0^{-3})=1$. Hence
\begin{align*}
h(A,\chi)&=\sum_{g \in \calm'(\ZZ/p^m\ZZ)} \chi(\mathrm{Tr}((g_0g\theta(1,1,\xi_0^{-3}))\cdot A)\\
&=\sum_{g \in \calm'(\ZZ/p^m\ZZ)} \chi(\mathrm{Tr}(g_0\cdot(g\cdot A))=\chi(\xi_0^2)h(A,\chi).
\end{align*}
Hence we have $h(A,\chi)=0$. Next, let $p=2$. Then, $m \ge 4$. Put $\xi_0=1+2^{m-1} \text{ mod } 2^m$ and $\zeta_0=1+2^{m-2} \text{ mod } 2^m$. Then,
$\zeta_0^2=\xi_0$ and $\zeta_0^6=\xi_0^3=\xi_0$. We have 
$\theta(1,1,\zeta_0^{-1})\cdot A=A$ and $\nu(\theta(1,1,\zeta_0^{-1}))=\xi_0^{-1}$. Then, similarly as above we have
\begin{align*}
h(A,\chi)&=\sum_{g \in \calm'(\ZZ/p^m\ZZ)} \chi(\mathrm{Tr}((g_0g\theta(1,1,\zeta_0^{-1}))\cdot A)\\
&=\sum_{g \in \calm'(\ZZ/p^m\ZZ)} \chi(\mathrm{Tr}(g_0\cdot(g\cdot A))=\chi(\xi_0)h(A,\chi).
\end{align*}
Hence we have $h(A,\chi)=0$.

Next, suppose that $\det A$ is a $p$-unit. For $c \in \ZZ/p^m\ZZ,$ put
 $$\calr_{p^m}(A,c)=\{g \in \calm'(\ZZ/p^m\ZZ) \ | \ \mathrm{Tr}(g\cdot A)=c\}. $$ 
Then we have 
$$h(A,\chi)=\sum_{c \in \ZZ/p^m\ZZ}\chi(c) \#(\calr_{p^m}(A,c)).$$ 
Let
$$\cals_{p^m}(A,c)=\{B \in \frkJ(\ZZ/p^m\ZZ) \ | \ \det B =\det A \text{ and } \mathrm{Tr}(B)=c \}.$$
Then, by Lemma \ref{lem.number-of-fiber}, 
$$\#(\calr_{p^m}(A,c))=a_{p^m} \#(\cals_{p^m}(A,c)),$$
where $a_{p^m}=p^{52m}(1-p^{-2})(1-p^{-6})(1-p^{-8})(1-p^{-12})$. Let 
$$\widetilde \cals_{p^m}(A,c)=\{ (Z_1,w) \in  \frkJ_2(\ZZ/p^m\ZZ) \times \in M_{2,1}(\frkC(\ZZ/p^m\ZZ)) 
\ | \ \det \begin{pmatrix} Z_1 & w \\ {}^t \bar w & 1-\mathrm{Tr}(Z_1) \end{pmatrix} c^3 =\det A \}.
$$
Then we have 
$$\#(\widetilde \cals_{p^m}(A,c))=\#(\cals_{p^m}(A,c)).$$
First, suppose that $\chi(u_0) \not=1$. Then we have
$$\widetilde \cals(A,cu_0)=\widetilde \cals(A,c),
$$
for any $c \in (\ZZ/p^m\ZZ)^\times$, and hence 
$$\sum_{c \in \ZZ/p^m\ZZ} \chi(c)\#\widetilde \cals(A,c)=\sum_{c \in \ZZ/p^m\ZZ} \chi(u_0c)\#\widetilde \cals(A,c)=\chi(u_0)\sum_{c \in \ZZ/p^m\ZZ} \chi(c)\#\widetilde \cals(A,c).$$
Therefore, $h(A,\chi)=0.$

Suppose that $\chi(u_0)=1$. Then we have
$$h(A,\chi)=a_{p^m} \sum_{(Z_1,w_1) } \overline{\widetilde \chi}\left(\det  \begin{pmatrix} Z_1 & w \\ {}^t \bar w &1-{\rm tr}(Z_1) \end{pmatrix} \right),
$$
where $(Z,w)$ runs over all elements of $\frkJ_2(\ZZ/p^m\ZZ) \times M_{2,1}(\frkC(\ZZ/p^m\ZZ))$
such that 
\begin{equation}\label{det}
\det  \begin{pmatrix} Z_1 & w \\ {}^t \bar w &1-{\rm tr}(Z_1) \end{pmatrix} \equiv u^3 \text{ mod } p^m,
\end{equation}
with some $u \in \ZZ \cap \ZZ_p^\times$. For such a matrix 
$\begin{pmatrix} Z_1 & w \\ {}^t \bar w &1-{\rm tr}(Z_1) \end{pmatrix}$, there exist exactly $l$ elements satisfying $(\ref{det})$. We have
$$\sum_{\eta \in \cald_{p^m}} \widetilde \chi \eta(v)=l\widetilde \chi(v) \text{ or } 0,
$$
according as $v \equiv u^m \text{ mod } p^m $ with some $u \in \ZZ \cap \ZZ_p^\times$ or not. Hence we have
$$h(A,\chi)=a_{p^m} \sum_{\eta \in \cald_{p^m}} \, 
\sum_{(Z_1,w) \in  \frkJ_2(\ZZ/p^m\ZZ) \times M_{2,1}
(\frkC(\ZZ/p^m\ZZ))) } \overline  {(\widetilde \chi \eta)}(\det A) \overline{(\widetilde \chi \eta)}\left(\det \begin{pmatrix} Z_1 & w \\ {}^t\bar w & 1-\mathrm{Tr}(Z_1)\end{pmatrix}\right).
$$
For $Z_1$, put
$$I(Z_1)=\sum_{w \in  M_{2,1}(\frkC(\ZZ/p^m\ZZ)) } \overline  {(\widetilde \chi \eta)}(\det A) \overline{(\widetilde \chi \eta)}\left(\det \begin{pmatrix} Z_1 & w \\ {}^t\bar w & 1-\mathrm{Tr}(Z_1)\end{pmatrix}\right).
$$
Then
$$I(Z_1)=\sum_{w \in  M_{2,1}(\frkC(\ZZ/p^m\ZZ)) } \overline{(\widetilde \chi \eta)}\Bigl(-\mathrm{Ad}(Z_1)[w] + (\det  Z_1) (1-\mathrm{Tr}(Z_1))\Bigr),
$$
where $\mathrm{Ad} (Z_1)=\begin{pmatrix} z_{22} & -z_{12} \\ -\overline{z_{12}} & z_{11} \end{pmatrix}$ for $Z_1=\begin{pmatrix} z_{11} & z_{12} \\ \overline{z_{12}} & z_{22} \end{pmatrix}$.
Then we may suppose that $Z_1=O$ or $Z_1=\xi_1 \bot p\xi_2$ with $\xi_1, \xi_2 \in \ZZ$ such that $\xi_1 \not\in p\ZZ$. In the former case, clearly we have $I(Z_1)=0$. In the latter case, $I(Z_1)$ can be expressed as
$$I(Z_1)=\sum_{w_2}\sum_{w_1} \overline{(\widetilde \chi \eta)}\Bigl(-N(w_1)+ (\det Z_1)(1-\mathrm {Tr}(Z_1))-p\xi_1 N(w_2)\Bigr).
$$
Then the Gram matrix of the quadratic form $-N(w)$ is $-\cals_N$ and $\chi(-\cals_N)=1$.
We note that $(\det Z_1)(1-{\rm tr}(Z_1))-p\xi_1 N(w_2) \in p\ZZ_p$. Thus by Lemma \ref{lem.quadratic-sum}, we have $I(Z_1)=0$. Suppose that $\det Z_1 \in \ZZ_p^\times$. Then we may suppose that $Z_1=\xi_1 \bot \xi_2$ with $\xi_1,\xi_2 \in \ZZ_p^\times$ and
$$I(Z_1)=\sum_{w \in  M_{2,1}(\frkC(\ZZ/p^m\ZZ))} \overline{(\widetilde \chi \eta)}\Bigl(-\xi_1N(w_1)-\xi_2N(w_2) + (\det  Z_1)(1-\mathrm{Tr}(Z_1))\Bigr).
$$
Then the Gram matrix of the quadratic form $-\xi_1N(w_1)-\xi_2N(w_2)$ is $-\xi_1\cals_N \bot \xi_2\cals_N$, and we have
$\chi(-\xi_1\cals_N \bot \xi_2\cals_N)=1$. Thus, by Lemma \ref{lem.quadratic-sum},
$$I(Z_1)=p^{8m}\overline{(\widetilde \chi \eta)}((\det Z_1)(1-\mathrm{Tr}(Z_1)).
$$
Hence we have
$$h(A,\chi)=a_{p^m}p^{8m} \sum_{\eta \in \cald_{p^m}} (\widetilde \chi \eta)(\det A)J_{\frkJ_2}(\overline{\widetilde \chi \eta},\overline{\widetilde \chi \eta})).
$$
Thus the assertion follows from Lemma \ref{lem.explicit-generalized-Jacobi-sum} and Corollary \ref{cor.explicit-generalized-Jacobi-sum}.
\end{proof}

\begin{corollary} \label{cor.explicit-character-sum}
Let $\chi$ be a primitive character mod $N$. Suppose that $\chi$ is not a quadratic character. 
Let $l=\mathrm{GCD}(3,\phi(N))$ and let $u_0$ be a primitive $l$-th root of unity mod $N$.
\begin{itemize}
\item [(1)] Suppose that $\chi(u_0) \not=1$. Then $h(A,\chi)=0$.
\item [(2)] Suppose that $\chi(u_0)=1$. Fix a character $\widetilde \chi$ such that $\widetilde \chi^3=\chi$. Then
$$h(A,\chi)=d_{N} \sum_{\eta \in \cald_{N}} (\widetilde \chi \eta)(\det A)J(\overline{\widetilde \chi \eta},\overline{\widetilde \chi \eta}, \overline{\widetilde \chi \eta} ).
$$
In particular, if $N$ is odd, then 
$$h(A,\chi)=d_{N} \sum_{\eta \in \cald_{N}} (\widetilde \chi \eta)(\det A){W(\overline{\widetilde \chi \eta})^3 \over W(\bar \chi)}.
$$
\end{itemize}
\end{corollary}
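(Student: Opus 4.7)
The strategy is to deduce the global statement from its prime-power version, Theorem \ref{th.explicit-character-sum}, via the Chinese Remainder Theorem. Write $N=\prod_{p\mid N} p^{m_p}$, and recall from Proposition \ref{prop.decomposition-character-sum}(1) the factorization
\[h(A,\chi)=\prod_{p\mid N} h(A,\chi^{(p)}).\]
Since $\chi$ is primitive mod $N$, each local factor $\chi^{(p)}$ is primitive mod $p^{m_p}$, so Theorem \ref{th.explicit-character-sum} applies to each $h(A,\chi^{(p)})$.

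Using the isomorphism $(\ZZ/N\ZZ)^\times\cong\prod_p(\ZZ/p^{m_p}\ZZ)^\times$ one decomposes $u_0$ as a tuple $(u_0^{(p)})_p$, chosen so that each $u_0^{(p)}$ is a primitive $l_p$-th root of unity mod $p^{m_p}$ with $l_p=\gcd(3,\phi(p^{m_p}))$; this is possible because $l_p\in\{1,3\}$ and $l=\max_p l_p$. Then $\chi(u_0)=\prod_p\chi^{(p)}(u_0^{(p)})$. For part (1), if $\chi(u_0)\neq 1$ then $\chi^{(p)}(u_0^{(p)})\neq 1$ for some $p$, and Theorem \ref{th.explicit-character-sum}(1) gives $h(A,\chi^{(p)})=0$, whence $h(A,\chi)=0$.

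For part (2), suppose $\chi(u_0)=1$ and fix $\widetilde\chi$ with $\widetilde\chi^3=\chi$; its local components satisfy $(\widetilde\chi^{(p)})^3=\chi^{(p)}$, so in particular $\chi^{(p)}(u_0^{(p)})=1$ for every $p$, and Theorem \ref{th.explicit-character-sum}(2) applies locally. The proof then requires three multiplicativity statements across $N=\prod p^{m_p}$: first, $d_N=\prod_p d_{p^{m_p}}$, which is immediate from the product formula in the definition and the identity $d_{p^{m_p}}=p^{64 m_p}\prod(\cdots)$; second, the CRT bijection $\cald_N\cong\prod_p\cald_{p^{m_p}}$ sending $\eta$ to $(\eta^{(p)})_p$; and third, the factorization
\[J(\bar\xi,\bar\xi,\bar\xi)=\prod_p J(\overline{\xi^{(p)}},\overline{\xi^{(p)}},\overline{\xi^{(p)}})\]
for $\xi=\widetilde\chi\eta$, which follows by applying CRT to the defining constraint $a_1+a_2+a_3\equiv 1\pmod N$ and writing $\bar\xi(a)=\prod_p\overline{\xi^{(p)}}(a)$. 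Combining these yields the asserted formula. The last assertion (for odd $N$) then follows by applying Corollary \ref{cor.explicit-generalized-Jacobi-sum} to each local Jacobi sum to convert it into the Gauss sum ratio.

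The main technical point is keeping track of the local-global compatibility for $u_0$ and $\widetilde\chi$: one must choose $u_0$ so that its local components are simultaneously primitive of the maximal possible order, and verify that this is consistent with the hypothesis of Lemma \ref{lem.existence-of-third-root-of-character} and its local analogues. A minor secondary subtlety is that the global nonquadraticity of $\chi$ does not prevent individual $\chi^{(p)}$ from being quadratic; however, part (1) of Theorem \ref{th.explicit-character-sum} is only invoked when $\chi^{(p)}(u_0^{(p)})\neq 1$, which forces $\chi^{(p)}$ to have order divisible by $3$ and hence to be non-quadratic, while part (2) of that theorem applies in the relevant case ($\det A$ a $p$-unit) regardless of quadraticity, since the key input is Lemma \ref{lem.quadratic-sum}.
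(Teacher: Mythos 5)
Your proof is essentially identical to the paper's: the paper establishes this corollary in one line by combining the CRT bijection $\cald_N\cong\prod_p\cald_{p^{e_p}}$ with Proposition \ref{prop.decomposition-character-sum} and Theorem \ref{th.explicit-character-sum}, exactly the reduction you carry out. The additional care you take with the local components of $u_0$, the multiplicativity of $d_N$ and of the Jacobi sums, and the possible quadraticity of the $\chi^{(p)}$ only spells out details the paper leaves implicit (the paper is likewise silent on the residual case where some $\chi^{(p)}$ is quadratic and $p\mid\det A$), so your argument matches the intended one.
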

\begin{proof} Let $N=p_1^{e_1}\cdots p_r^{e_r}$ with $p_1,\ldots,p_r$ distinct primes such that $e_1,\ldots,e_r$ are positive integers. Then the mapping
$$\cald_N \ni \chi \mapsto (\chi^{(p_1)},\ldots,\chi^{(p_r)}) \in \cald_{p_1^{e_1}} \times \cdots \times \cald_{p_r^{e_r}},
$$
is a bijection. Hence the assertion follows from  Proposition \ref{prop.decomposition-character-sum} and 
Theorem \ref{th.explicit-character-sum}.
\end{proof}

\begin{theorem} \label{explicit-twisted-Koecher-Maass-first-kind}
Let $\chi$ be a primitive  character mod $N$. Suppose that $\chi$ is not a quadratic character. 
Let $l=\mathrm{GCD}(3,\phi(N))$ and let $u_0$ be a primitive $l$-th root of unity mod $N$.
\begin{itemize}
\item [(1)] Suppose that $\chi(u_0) \not=1$. Then 
$$K^{(1)}(s,F,\chi)=0.
$$
\item [(2)] Suppose that $\chi(u_0)=1$. Fix a character $\widetilde \chi$ such that $\widetilde \chi^3=\chi$. Then
$$K^{(1)}(s,F,\chi)=d_{N} \sum_{\eta \in \cald_{N}} K^{(2)}(s,F,\widetilde \chi \eta)J(\overline{\widetilde \chi \eta},\overline{\widetilde \chi \eta}, \overline{\widetilde \chi \eta}).
$$
In particular if $N$ is odd, then
$$K^{(1)}(s,F,\chi)=d_{N} \sum_{\eta \in \cald_{N}} K^{(2)}(s,F,\widetilde \chi \eta){W(\overline{\widetilde \chi \eta})^3 \over W(\bar \chi)}.
$$
\end{itemize}
\end{theorem}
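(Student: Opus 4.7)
The plan is to deduce this theorem directly from the already-established machinery: Proposition \ref{rewritten-twisted_Koecher-Maass} expresses $K^{(1)}(s,F,\chi)$ in terms of the character sum $h(T,\chi)$, and Corollary \ref{cor.explicit-character-sum} evaluates $h(A,\chi)$ explicitly. So the argument is essentially a substitution followed by interchanging the order of summation, and the only thing to verify is that after interchange the inner sum really is $K^{(2)}(s,F,\widetilde\chi\eta)$.

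More precisely, I would first invoke Proposition \ref{rewritten-twisted_Koecher-Maass} to write
$$K^{(1)}(s,F,\chi)=\sum_{T \in \frkJ(\ZZ)_{>0}/\calm'(\ZZ)} \frac{h(T,\chi)\,a_F(T)}{\epsilon(T)(\det T)^{s}}.$$
Under the hypothesis $\chi(u_0)\neq 1$, Corollary \ref{cor.explicit-character-sum}(1) gives $h(T,\chi)=0$ for every $T$, so $K^{(1)}(s,F,\chi)=0$, which proves part (1). Under the hypothesis $\chi(u_0)=1$, Corollary \ref{cor.explicit-character-sum}(2) yields
$$h(T,\chi)=d_N\sum_{\eta\in\cald_N}(\widetilde\chi\eta)(\det T)\,J(\overline{\widetilde\chi\eta},\overline{\widetilde\chi\eta},\overline{\widetilde\chi\eta}).$$

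Substituting and swapping the finite sum over $\eta\in\cald_N$ with the Dirichlet series in $T$ (which is absolutely convergent in the range ensured by Theorem \ref{KM-general}, so swapping is justified), I obtain
$$K^{(1)}(s,F,\chi)=d_N\sum_{\eta\in\cald_N}J(\overline{\widetilde\chi\eta},\overline{\widetilde\chi\eta},\overline{\widetilde\chi\eta})\sum_{T \in \frkJ(\ZZ)_{>0}/\calm'(\ZZ)} \frac{(\widetilde\chi\eta)(\det T)\,a_F(T)}{\epsilon(T)(\det T)^{s}}.$$
Recalling the definition of the twisted Koecher-Maass series of the second kind stated in the introduction, the inner sum is exactly $K^{(2)}(s,F,\widetilde\chi\eta)$, giving the first formula in part (2). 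For the case of odd $N$, the Jacobi sum factor $J(\overline{\widetilde\chi\eta},\overline{\widetilde\chi\eta},\overline{\widetilde\chi\eta})$ simplifies to $W(\overline{\widetilde\chi\eta})^3/W(\bar\chi)$ by Corollary \ref{cor.explicit-generalized-Jacobi-sum}, yielding the second formula.

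Since all nontrivial work has already been done in Corollaries \ref{cor.explicit-generalized-Jacobi-sum} and \ref{cor.explicit-character-sum} and in Proposition \ref{rewritten-twisted_Koecher-Maass}, there is no serious obstacle here. The only point requiring mild care is the existence of $\widetilde\chi$ with $\widetilde\chi^3=\chi$ (guaranteed by Lemma \ref{lem.existence-of-third-root-of-character} under the hypothesis $\chi(u_0)=1$) and the legitimacy of exchanging the finite sum over $\cald_N$ with the Dirichlet series, both of which are routine.
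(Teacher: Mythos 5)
Your proof is correct and follows exactly the route the paper intends: the theorem is stated without proof there precisely because it is the immediate consequence of Proposition \ref{rewritten-twisted_Koecher-Maass} combined with Corollary \ref{cor.explicit-character-sum} (and Corollary \ref{cor.explicit-generalized-Jacobi-sum} for the odd-$N$ simplification) that you spell out.
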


\section{Mass formula for the exceptional group of type $F_4$}\label{F_4}

In this section, we recall the mass formula and local density from \cite{K-K-Y} for the sake of completeness.
For $T \in \frak J_R$, we define a group scheme  $\calu_T$ over $R$ by
\[\calu_T(S)=\{ g \in  \calm(S) \ | \ g\cdot T=T \}\]
for any commutative $R$-algebra $S$. 
By definition, $\calu_T(S)\subset \calm'$. 
In particular, for $T \in \frkJ(\ZZ)_{>0}$, put ${\bf U}_T=\calu_T \otimes_{\ZZ} \QQ$. It is easy to see that 
${\bf U}_T$ is a connected regular algebraic group over $\QQ$ by (geometric) fiberwise argument. 
Further, ${\bf U}_T$ is
an exceptional group of type $F_4$ \cite[p.108]{Mars}, and therefore we call $\calu_T$ the group scheme of type $F_4$.

Let $G$ be a group (resp. a group scheme) acting on a set (resp. a scheme) $S$. Then, for $s \in S$, we denote by $\calo_G(s)$ the orbit (resp. the orbit scheme) of $s$ under $G$, that is
\[\calo_G(s)=\{g\cdot s \ | \ g \in G \}.\]
For $T \in \calm(\ZZ_p)^{\rm ns}$ we note that we have  $\int_{\calo_{\calm(\ZZ_p)}(T)} |d\sigma(x)|_p \not=0$.

Recall the definition of the local density  $\beta_p(T)$ of $T$ for $T \in \calm(\ZZ_p)^{\rm ns}$ \cite{K-K-Y}: 
\[\beta_p(T)={ (1-p^{-1})\delta_p \over  \int_{\calo_{\calm(\ZZ_p)}(T)} |d\sigma(x)|_p },\]
where
$\delta_p= (1-p^{-2})(1-p^{-5})(1-p^{-6})(1-p^{-8})(1-p^{-9})(1-p^{-12})$. 

\begin{proposition} \label{prop.reinterpolation-of-local-density}
For $T \in \frkJ(\ZZ_p)^{\rm ns}$ and an integer $n \ge \ord_p(\det T)+1$, we have 
\[\beta_p(T)= p^{-52n}\#
\calu_T(\ZZ_p/p^n\ZZ_p).\]
\end{proposition}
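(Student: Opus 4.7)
The plan is to compute $\mathrm{vol}(\calo_{\calm(\ZZ_p)}(T))$ by a combination of smoothness of $\calm$, orbit--stabilizer for the finite group action of $\calm(\ZZ_p/p^n\ZZ_p)$ on $\frkJ(\ZZ_p/p^n\ZZ_p)$, and a Hensel-type lifting along the orbit map, and then to substitute the result into the definition of $\beta_p(T)$. The geometric input is the orbit map $\phi_T : \calm \to \frkJ$, $g \mapsto g\cdot T$, whose scheme-theoretic stabilizer at $T$ is by definition $\calu_T$; the dimensions match since $\dim\calm = 79$ (the $78$-dimensional $E_6$ plus the similitude torus), $\dim\calu_T = 52 = \dim F_4$, and $\dim\frkJ = 27$, so the orbit is open in $\frkJ$.

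I would first assemble the counting formulas. Smoothness of $\calm$ over $\ZZ_p$ yields
\[
\#\calm(\FF_p) = p^{36}(p-1)(p^2-1)(p^5-1)(p^6-1)(p^8-1)(p^9-1)(p^{12}-1) = p^{79}(1-p^{-1})\delta_p,
\]
and $\#\calm(\ZZ_p/p^n\ZZ_p) = p^{79(n-1)}\#\calm(\FF_p)$ for all $n\ge 1$. Because $\calu_T$ is the stabilizer functor, orbit--stabilizer in the finite group gives
\[
\#\calo_{\calm(\ZZ_p/p^n\ZZ_p)}(T_n) = \frac{\#\calm(\ZZ_p/p^n\ZZ_p)}{\#\calu_T(\ZZ_p/p^n\ZZ_p)}.
\]
The self-dual measure $|d\sigma|_p$ assigns volume $p^{-27n}$ to each residue class in $\frkJ(\ZZ_p)\cong\ZZ_p^{27}$, so
\[
\int_{\calo_{\calm(\ZZ_p)}(T)}|d\sigma(x)|_p = p^{-27n}\cdot \#\pi_n\bigl(\calo_{\calm(\ZZ_p)}(T)\bigr),
\]
provided each residue class meeting $\calo_{\calm(\ZZ_p)}(T)$ lies entirely inside it. Granting this, together with the easier identification $\pi_n\bigl(\calo_{\calm(\ZZ_p)}(T)\bigr) = \calo_{\calm(\ZZ_p/p^n\ZZ_p)}(T_n)$ (immediate from liftability of $\calm(\ZZ_p)\to\calm(\ZZ_p/p^n\ZZ_p)$ by smoothness of $\calm$), substitution into $\beta_p(T) = (1-p^{-1})\delta_p/\int_\calo|d\sigma|_p$ produces the stated identity after cancellation of the $(1-p^{-1})\delta_p$ factor.

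The main obstacle is the bracketed Hensel claim: $\pi_n^{-1}\bigl(\pi_n(\calo_{\calm(\ZZ_p)}(T))\bigr) = \calo_{\calm(\ZZ_p)}(T)$ whenever $n\ge\ord_p(\det T)+1$. Given $X\in\frkJ(\ZZ_p)$ with $X\equiv g_n\cdot T\pmod{p^n}$ for some $g_n\in\calm(\ZZ_p/p^n\ZZ_p)$, lift $g_n$ arbitrarily to $g\in\calm(\ZZ_p)$ and set $T'=g^{-1}\cdot X$, so $T'\equiv T\pmod{p^n}$. The task reduces to solving $\phi_T(h)=T'$ for some $h\in\calm(\ZZ_p)$ close to the identity, by Newton iteration along the orbit map. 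The bound $n\ge\ord_p(\det T)+1$ is precisely what is needed: the cokernel of the differential $d\phi_T(e):\mathrm{Lie}(\calm)_{\ZZ_p}\to\frkJ_{\ZZ_p}$ is annihilated by $\det T$ (reflecting the prehomogeneous vector space structure of $(\calm,\frkJ)$ with $\det$ as relative invariant), so each iteration gains one $p$-adic digit past depth $\ord_p(\det T)$, and the sequence converges to an exact lift in $\calm(\ZZ_p)$.
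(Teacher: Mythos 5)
The paper does not actually prove this proposition in situ: its ``proof'' is the single citation \cite[Lemmas 3.3 and 6.8, Theorem 3.7]{K-K-Y}, so all the content is outsourced to the companion paper. Your outline is the natural shape of such an argument, and the bookkeeping is correct: $\dim\calm=79$, $\#\calm(\FF_p)=p^{79}(1-p^{-1})\delta_p$, smoothness gives $\#\calm(\ZZ/p^n\ZZ)=p^{79(n-1)}\#\calm(\FF_p)$ and the surjectivity of $\calm(\ZZ_p)\to\calm(\ZZ/p^n\ZZ)$, orbit--stabilizer applies because $\calu_T(\ZZ_p/p^n\ZZ_p)$ is by definition the stabilizer of $T\bmod p^n$, and substituting $\mathrm{vol}(\calo)=p^{-27n}\cdot p^{79n}(1-p^{-1})\delta_p/\#\calu_T(\ZZ_p/p^n\ZZ_p)$ into the definition of $\beta_p(T)$ yields exactly $p^{-52n}\#\calu_T(\ZZ_p/p^n\ZZ_p)$.

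The gap is in the one step you yourself flag as the main obstacle, and your proposed resolution does not work as stated. First, the assertion that the cokernel of $d\phi_T(e):\mathrm{Lie}(\calm)_{\ZZ_p}\to\frkJ(\ZZ_p)$ is annihilated by $\det T$ is left unproved, and it is not formal: the obvious structure-group element attached to $T$, namely the quadratic map $U_T$, has determinant $(\det T)^{18}$ on the $27$-dimensional space, and one must genuinely analyze the contribution of the $52$-dimensional derivation part to do better. Second, and more seriously, even granting that the cokernel is killed by $p^{a}$ with $a=\ord_p(\det T)$, Newton iteration along a map whose differential has an elementary divisor $p^{a}$ needs the initial congruence to hold modulo $p^{n}$ with $n>2a$, not $n>a$: solving the linearized equation costs $a$ digits of precision, so the quadratic error has valuation at least $2(n-a)$, which exceeds $n$ only when $n>2a$. (This is exactly why the classical stabilization bound for local densities of quadratic forms is $2\,\ord_p(\det 2T)+1$.) Thus your iteration ``gains one digit per step'' only under the stronger hypothesis $n\geq 2\,\ord_p(\det T)+1$, which is not what the proposition asserts. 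Obtaining the sharp bound $n\geq\ord_p(\det T)+1$ requires exploiting more structure --- for instance reducing $T$ to the form $\epsilon_1p^{a_1}\bot\epsilon_2p^{a_2}\bot\epsilon_3p^{a_3}$ and lifting the congruence coordinate by coordinate --- and that is precisely the content of the cited Lemma 6.8 of \cite{K-K-Y}, which cannot be waved through by a generic Hensel argument.
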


\begin{proof}
The assertion follows from \cite[Lemmas 3.3 and 6.8, Theorem 3.7]{K-K-Y}.
\end{proof}

\noindent{\it Proof of Lemma \ref{lem.number-of-fiber}.} The first assertion follows from \cite[Lemma 3.1 (2.2)]{K-K-Y}.
Suppose that $\phi_{T,p^m}^{-1}(S)) \neq \emptyset$. Then, clearly we have
\[\#\phi_{T,p^m}^{-1}(S))=\#\phi_{T,p^m}^{-1}(T))=\# \calu_T(\ZZ_p/p^m\ZZ_p),\]
and the second assertion follows from \cite[Corollary 6.2, (1)]{K-K-Y}. $\square$

\smallskip

Let $T$ be an element of $\frkJ(\ZZ)_{>0}$. For $T' \sim_{\calm'(\ZZ)} T$, we say that $T'$ belongs to the same ${\bf M}_\AAA$-genus as $T$ and write
 $T' \approx T$ if $T' \sim_{\calm'(\ZZ_p)} T$ for any prime number $p$.
For $T \in {\mathfrak J}(\ZZ)_{>0}$, let 
\[\calg(T)=\{T'\in {\mathfrak J}(\ZZ)_{>0} \ | \ T' \approx T \}.\]  Put 
\begin{equation*}\label{mass-def}
\mathrm{Mass}(T)=\sum_{T' \in \calg(T)/\calm'(\ZZ)} {1 \over \epsilon(T')},
\end{equation*}
where $\epsilon(T')=\#\calu_{T'}(\ZZ)$.

Then we have the mass formula for $T$ (cf. \cite[Theorem 3.8]{K-K-Y}).

\begin{theorem} \label{th.mass-formula2}(Mass-formula)
Let $T$ be an element of $\frkJ(\ZZ)_{>0}$. Then we have
\[\mathrm{Mass}(T)= c {(\det T)^9 \over \prod_{p<\infty} \beta_p(T)},\quad c=\frac {5! 7! 11!}{(2\pi)^{28}}.\]
\end{theorem}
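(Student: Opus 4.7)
The plan is to run the Siegel--Weil / Tamagawa-number argument for the stabilizer group scheme $\calu_T$, which has already been identified as a semisimple simply connected $\QQ$-group of type $F_4$. The three ingredients are: (i) Weil's Tamagawa-number theorem (due to Harder for $F_4$), giving $\tau({\bf U}_T)=1$; (ii) Proposition \ref{prop.reinterpolation-of-local-density}, which expresses $\beta_p(T)$ via counts of $\calu_T$-points modulo $p^n$; and (iii) the Macdonald formula for the volume of the compact real form of $F_4$.

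First I would translate $\mathrm{Mass}(T)$ into an adelic volume. Since $T$ is positive definite, ${\bf U}_T(\RR)$ is the compact real form of $F_4$, and in particular $\calu_T(\ZZ)$ is finite. Via the standard local-global orbit dictionary for the action of $\calm'$ on $\frkJ$, the genus $\calg(T)/\calm'(\ZZ)$ is in bijection with the finite idele class set ${\bf U}_T(\QQ)\backslash {\bf U}_T(\AAA_f)/\prod_p \calu_T(\ZZ_p)$. The weighted sum $\sum 1/\epsilon(T')$ becomes
\[
\mathrm{Mass}(T)=\frac{\mathrm{vol}_{\omega}\bigl({\bf U}_T(\QQ)\backslash {\bf U}_T(\AAA)\bigr)}{\mathrm{vol}_{\omega_\infty}({\bf U}_T(\RR))\,\prod_p \mathrm{vol}_{\omega_p}(\calu_T(\ZZ_p))}
\]
with respect to a fixed $\QQ$-rational invariant gauge form $\omega$ on ${\bf U}_T$.

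Second I would apply $\tau({\bf U}_T)=1$ with the canonical convergence factors $\zeta_p(2)\zeta_p(6)\zeta_p(8)\zeta_p(12)$ attached to the fundamental degrees $2,6,8,12$ of $F_4$. This converts the adelic volume into a product of zeta values times local volumes. Proposition \ref{prop.reinterpolation-of-local-density} then identifies
\[
\mathrm{vol}_{\omega_p}(\calu_T(\ZZ_p)) \;=\; \frac{(1-p^{-1})\delta_p}{\beta_p(T)}\cdot(\text{convergence factor at } p),
\]
so that the finite-place contribution collapses to $\prod_p \beta_p(T)^{-1}$ once the zeta and convergence factors cancel.

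The main obstacle is the archimedean computation, which is where the constant $c$ and the power $(\det T)^9$ appear. Writing $\frkJ(\RR)^{\rm ns}\simeq \calm'(\RR)/{\bf U}_T(\RR)$ (acting through $g\mapsto g\cdot T$), the pushforward of the invariant measure on $\calm'(\RR)$ to $\frkJ$ scales by the Jacobian $(\det T)^{-9}$, the exponent $9$ being the genus of the tube domain $\frkT$ (the same $9$ appearing in the Koecher--Gindikin gamma factor $\Gamma_\frkT(s)=(2\pi)^{12}\Gamma(s)\Gamma(s-4)\Gamma(s-8)$ already used in Theorem \ref{KM-general}). Separately, the Macdonald volume formula applied to the exponents $(1,5,7,11)$ of $F_4$ gives
\[
\mathrm{vol}(F_4^{\rm cpt}) \;=\; \prod_{d\in\{2,6,8,12\}}\frac{(2\pi)^{d}}{(d-1)!} \;=\; \frac{(2\pi)^{28}}{5!\,7!\,11!}.
\]
Combining this with the Jacobian and inverting, the archimedean factor contributes exactly $\displaystyle\frac{5!\,7!\,11!}{(2\pi)^{28}}(\det T)^9$; the remaining global $\zeta(2)\zeta(6)\zeta(8)\zeta(12)$ coming from Weil's formula cancel the convergence-factor tails at the finite places, producing the stated identity. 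The subtle step is precisely the bookkeeping required to match the algebraic gauge form $\omega_\infty$ on ${\bf U}_T(\RR)$ with the Haar measure in which Macdonald's formula is stated, and to track the Jacobian between $\omega$, the measure $d^*Y$ on $R_3^+(\RR)$, and the invariant measure on the $F_4$-orbit of $T$.
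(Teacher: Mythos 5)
The paper does not actually prove this theorem: it is imported verbatim from \cite[Theorem 3.8]{K-K-Y}, so there is no in-paper argument to compare yours against. That said, your Tamagawa-measure strategy is the standard (and surely the intended) route, and your numerology is consistent with the surrounding text: the degrees $2,6,8,12$ of $F_4$ match the factor $(1-p^{-2})(1-p^{-6})(1-p^{-8})(1-p^{-12})$ appearing in Lemma \ref{lem.number-of-fiber}, $\dim F_4=52$ matches the exponent in Proposition \ref{prop.reinterpolation-of-local-density}, and the Macdonald volume $\prod_d (2\pi)^d/(d-1)!=(2\pi)^{28}/(5!\,7!\,11!)$ is exactly $c^{-1}$.

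Two points need repair. First, your identification of the local volume is garbled: Proposition \ref{prop.reinterpolation-of-local-density} says precisely that $\beta_p(T)=p^{-52n}\#\calu_T(\ZZ_p/p^n\ZZ_p)$ for $n$ large, i.e.\ $\beta_p(T)$ \emph{is} $\mathrm{vol}_{\omega_p}(\calu_T(\ZZ_p))$ in the standard normalization; the quantity $(1-p^{-1})\delta_p/\beta_p(T)$ that you wrote down is instead the volume of the orbit $\calo_{\calm(\ZZ_p)}(T)$ in $\frkJ(\ZZ_p)$ (note also that $\delta_p$ carries the $E_6$ degrees $2,5,6,8,9,12$, because $\beta_p$ is defined through the $\calm$-action, not through $\calu_T$). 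Your final collapse to $\prod_p\beta_p(T)^{-1}$ is right only because you tacitly revert to the correct identification; and since $\calu_T$ is semisimple simply connected, the product $\prod_p\mathrm{vol}_{\omega_p}(\calu_T(\ZZ_p))$ converges absolutely, so no convergence factors are needed (their appearance and disappearance in your sketch is spurious). Second, the step where all the content of the theorem actually lives --- matching the $\QQ$-rational gauge form on ${\bf U}_T$ at the archimedean place against the invariant measure $d^*Y=\det(Y)^{-9}dY$ on $R_3^+(\RR)$ and against the normalization in which Macdonald's formula holds, which is where both $(\det T)^9$ and $c$ are generated --- is only asserted, and you acknowledge as much. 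As an outline of the proof in \cite{K-K-Y} this is credible; as a proof it is not yet one.
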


For $p \le \infty$, let $\iota_p:\frkJ(\QQ) \longrightarrow \frkJ(\QQ_p)$ be the natural embedding, and 
let $\varphi: \frkJ(\QQ) \longrightarrow   \prod_{p \le \infty} \frkJ(\QQ_p)$ be the diagonal embedding.
Let
\[\JJ=\prod_p (\frkJ(\ZZ_p)/\calm'(\ZZ_p)).\]
Then $\varphi$  induces a mapping from $\frkJ(\ZZ)_{>0}/\prod_p \calm'(\ZZ_p)$ to $\JJ$, which will be denoted also by $\varphi$. For  $d \in \ZZ_p \setminus \{0\}$, put
\[\frkJ(d,\ZZ_p)=\{ T \in  \frkJ(\ZZ_p) \ | \ \det T=d\}.\]
Moreover, for a positive integer $d$, put
\[\frkJ(d, \ZZ)=\{ T \in \frkJ(\ZZ) \ | \ \det T=d\},\]
and
\[\JJ(d)=\prod_p (\frkJ(d,\ZZ_p)/\calm'(\ZZ_p)).\]
Now we have the following local-global principle (cf. \cite[Proposition 3.10]{K-K-Y}).
\begin{proposition} \label{prop.local-global-exceptional-Jordan} The mapping $\varphi$ induces a bijection from
$\frkJ(d,\ZZ)_{>0}/\prod_p \calm'(\ZZ_p)$ to $\JJ(d).$
\end{proposition}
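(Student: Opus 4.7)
The plan is to verify well-definedness, injectivity, and surjectivity separately, with surjectivity being the substantive content.

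Well-definedness of $\varphi$ on equivalence classes is immediate from the construction. Injectivity is essentially tautological: by the very definition of the equivalence relation on the source, two positive definite global elements $T_1, T_2 \in \frkJ(d,\ZZ)_{>0}$ are $\prod_p \calm'(\ZZ_p)$-equivalent iff for each prime $p$ there exists $g_p \in \calm'(\ZZ_p)$ with $g_p \cdot \iota_p(T_1) = \iota_p(T_2)$, which is precisely the assertion that $\varphi(T_1) = \varphi(T_2)$ in $\JJ(d) = \prod_p(\frkJ(d,\ZZ_p)/\calm'(\ZZ_p))$.

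For surjectivity I would proceed as follows. Given $(T_p)_p$ with $T_p \in \frkJ(d,\ZZ_p)$ for each $p$, I first choose a reference $T_\infty \in R_3^+(\RR)$ with $\det T_\infty = d$; this is possible because $d > 0$ and $R_3^+(\RR)$ contains elements of every positive determinant. This produces an adelic element $(T_\infty,(T_p)_p) \in \frkJ(\AAA)$ of determinant $d$. The next step is to classify the $\bold M'(\QQ_v)$-orbits on $\frkJ(\QQ_v)^{\mathrm{ns}}$: over each completion, two nondegenerate elements lie in the same $\bold M'(\QQ_v)$-orbit precisely when they have the same determinant (over the $p$-adics this is the fundamental orbit classification; over $\RR$ one additionally uses that positive definite elements with the same determinant form a single orbit under $\bold M'(\RR)$). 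Hence the fixed $T_0 := T_\infty$ and the adelic tuple $(T_\infty,(T_p))$ lie in the same $\bold M'(\AAA)$-orbit. Transferring the question to the stabilizer $\bold U_{T_0}$, the existence of a rational point in this orbit follows from the Hasse principle for torsors under $\bold U_{T_0}$, which is a connected simply connected algebraic group of type $F_4$; such groups have $H^1(\QQ,\bold U_{T_0})$ computable locally and, when $T_0$ is positive definite (so $\bold U_{T_0}(\RR)$ is compact and anisotropic), the Hasse principle delivers a global rational $T \in \frkJ(\QQ)$ matching $T_p$ up to $\calm'(\ZZ_p)$ at each finite place and positive definite at infinity.

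Finally I would verify that this rational $T$ actually lies in $\frkJ(\ZZ)$: since $\iota_p(T)$ is $\bold M'(\QQ_p)$-conjugate to the integral element $T_p$ and, after further adjustment by $\calm'(\ZZ_p)$, equal to $T_p$ in each local quotient, integrality at every finite prime is forced, hence $T \in \frkJ(\ZZ)$ by the usual local-global identification $\frkJ(\ZZ) = \frkJ(\QQ) \cap \prod_p \frkJ(\ZZ_p)$. Positivity at infinity was arranged by our choice of $T_\infty$. The main obstacle is the Hasse principle input: one must know that $H^1(\QQ,\bold U_{T_0})$ injects into $\prod_v H^1(\QQ_v, \bold U_{T_0})$ for the $F_4$-form $\bold U_{T_0}$ at hand, and that the obstruction class vanishes for our adelic orbit. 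This is the classical Hasse principle for simply connected groups over number fields (Kneser, Harder, Chernousov), applied to the compact-at-infinity $F_4$-form stabilizing a positive definite reference element, and it is this input that does the real work.
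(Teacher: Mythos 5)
Your reduction of the problem and the formal parts are fine: well-definedness and injectivity are indeed immediate from the definition of the source as $\frkJ(d,\ZZ)_{>0}$ modulo simultaneous $\calm'(\ZZ_p)$-equivalence at every $p$, and the classification of ${\bf M}'(\QQ_p)$-orbits on nondegenerate elements by the determinant (via the vanishing of $H^1(\QQ_p,{\bf U}_{T})$ for the simply connected stabilizer of type $F_4$) is the right local input. Note, though, that your base point should be a rational, in fact integral, element such as $T_0=\mathrm{diag}(1,1,d)$ rather than the real element $T_\infty$, and that one also needs $\frkJ(d,\ZZ_p)/\calm'(\ZZ_p)$ to be a single class for $p\nmid d$ in order to assemble the local comparison elements into a finite adele.

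The genuine gap is in the final step. The Hasse principle for torsors under ${\bf U}_{T_0}$ (Kneser--Harder--Chernousov) controls \emph{rational} orbits: it shows that rational elements of determinant $d$ which are ${\bf M}'(\QQ_v)$-equivalent everywhere are ${\bf M}'(\QQ)$-equivalent. It says nothing about landing in a prescribed \emph{integral} orbit $\calm'(\ZZ_p)\cdot T_p$, and your phrase ``after further adjustment by $\calm'(\ZZ_p)$'' is exactly the unproved step. The target $\JJ(d)=\prod_p\bigl(\frkJ(d,\ZZ_p)/\calm'(\ZZ_p)\bigr)$ separates points that have identical local rational invariants: for instance $\mathrm{diag}(1,1,p^2)$ and $\mathrm{diag}(1,p,p)$ have the same determinant, hence lie in one ${\bf M}'(\QQ_p)$-orbit, but in different $\calm'(\ZZ_p)$-orbits, so an argument that only tracks determinants and rational orbits cannot hit both of the corresponding points of $\JJ(p^2)$. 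What actually does the work is strong approximation for the simply connected group ${\bf M}'$ of type $E_6$ (whose real points are noncompact), which gives ${\bf M}'(\AAA_f)={\bf M}'(\QQ)\cdot\prod_p\calm'(\ZZ_p)$: writing $T_p=g_p\cdot T_0$ with $g_p\in{\bf M}'(\QQ_p)$ and $g_p=1$ for $p\nmid d$, decomposing $(g_p^{-1})=\gamma\cdot(u_p)$ with $\gamma\in{\bf M}'(\QQ)$ and $u_p\in\calm'(\ZZ_p)$, and setting $T=\gamma^{-1}\cdot T_0$ yields an element of $\frkJ(\QQ)\cap\bigcap_p\frkJ(\ZZ_p)=\frkJ(\ZZ)$ with $T=u_p\cdot T_p$ for all $p$, positive definite because the ${\bf M}'(\RR)$-orbit of a positive definite element consists of positive definite elements. (The paper gives no in-text proof, deferring to \cite[Proposition 3.10]{K-K-Y}, so there is nothing to compare with directly; but the Hasse principle for the $F_4$-stabilizer is the tool for controlling the classes \emph{within} a genus, not for realizing a prescribed formal genus by a global element.)
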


\section{Twisted Koecher-Maass series of the second kind of the Ikeda type lift for the exceptional group of type $E_{7,3}$}\label{KM}

We review the Ikeda type lift of a cuspidal Hecke eigenform in \cite{K-Y}, and 
consider its twisted Koecher-Maass series of the second kind. Let $k\geq 10$ be a positive integer, and let 
\[f(\tau)=\sum_{m=1}^{\infty} a_f(m)\exp(2\pi \sqrt{-1} m\tau)\]
be in $S_{2k-8}(SL_2(\ZZ))$. For a prime number $p$, let  
$\alpha_p$ be a complex number  such that  and 
$a_f(p)=p^{(2k-9)/2}(\alpha_p+\alpha_p^{-1}).$ By Deligne's theorem we have $|\alpha_p|=1$. For a Dirichlet character $\chi$ we define the automorphic $L$-function $L(s,\pi_f,\chi)$ of the cuspidal representation $\pi_f$ attached to $f$ as 
\[L(s,\pi_f,\chi)=\prod_p \{(1-p^{-s}\alpha_p\chi(p))(1-p^{-s}\alpha_p^{-1}\chi(p))\}^{-1}.\]
If $\chi$ is the principal character, we simply write  $L(s,\pi_f,\chi)$ as $L(s,\pi_f)$.

Let $p$ be a prime number. For $T \in \frkJ(\QQ_p)$, let $T \sim_{\calm(\ZZ_p/p^n\ZZ_p)}  \epsilon_1 p^{a_1} \bot \epsilon_2 p^{a_2} \bot \epsilon_3 p^{a_3}$ with $a_1,a_2,a_3\in\Bbb Z\cup \{\infty\}$, $a_1\leq a_2\leq a_3$, and $\epsilon_i\in \Bbb Z_p^\times$. 
Define $\kappa_p(T)$ by $\displaystyle \kappa_p(T)=\prod_{1 \le i \le 3 \atop a_i>0} p^{a_i}$. Here we make the convention that $\kappa_p(T)=1$ if $T=O$. We note that $\kappa_p(T)$ is uniquely determined by 
$T \text{ mod } \frkJ(\ZZ_p)$. Moreover, for $x \in \QQ_p$, put $\mathbf{e}_p(x)=\exp(2\pi \sqrt{-1}\mathrm{Frac}(x))$, where $\mathrm{Frac}(x)$ is the fractional part of $x$.
For $T \in \frkJ(\ZZ_p)^{\rm ns}$,  let $S_p(T)$ be the local Siegel series defined by
\[S_p(s,T)=\sum_{T' \in \frkJ(\QQ_p)/\frkJ(\ZZ_p)} {\bf e}_p((T,T'))\kappa_p(T')^{-s}.\]
Then, there is a polynomial $f_T^p(X)$ in $X$ such that 
\[S_p(s,T)=(1-p^{-s})(1-p^{4-s})(1-p^{8-s})f_T^p(p^{9-s}).\]
Put
\[\widetilde f_T^p(X)=X^{\ord_p(\det T)}f_T^p(X^{-2}).\]
Then it satisfies the functional equation
\begin{equation}\label{functional}
\widetilde f_T^p(X^{-1})=\widetilde f_T^p(X).
\end{equation}
For $T \in \frkJ(\ZZ)_{>0}$ put
$a_{F_f}(T)=\det(T)^{\frac {2k-9}2} \prod_{p| \det(T)} \widetilde f_T^p(\alpha_p)$, and define the Fourier series $F_f(Z)$ on $\frkT$ by
$$F_f(Z)=\sum_{T\in \frkJ(\ZZ)_{>0}} a_{F_f}(T) {\bf e}((T,Z)) \quad  (Z \in \frkT).
$$
Then, Kim and Yamauchi \cite{K-Y} showed that
$F_f$ is a cuspidal Hecke eigenform of weight $2k$ for ${\bf G}(\ZZ)$.

We consider the twisted Koecher-Maass series of $F_f$ of the second kind.
For a Dirichlet character $\chi$ mod $N$, recall 
$$K^{(2)}(s,F_f,\chi)=\sum_{T\in {\frak J}(\Bbb Z)_{>0}/\calm'(\ZZ)} \frac {\chi(\det T)a_{F_f}(T)}{\epsilon(T) \det(T)^s}.
$$
Even though $K^{(2)}(s,F,\chi)$ does not have an Euler product for a general $F$, we show that $K^{(2)}(s,F_f,\chi)$ has an Euler product, which enables us to reduce its computation to each $p$-adic place. For $d \in \ZZ_p \setminus \{0\}$, put
\[\lambda_p(d,X)=\sum_{T \in \frkJ(d,\ZZ_p)/\calm'(\ZZ_p)} {\widetilde f_T^p(X) \over  \beta_p(T)},\]
and for a positive integer $d$, put
\[C(d;f)=\prod_{p<\infty} \lambda_p(d,\alpha_p).\]

\begin{theorem} \label{th.local-global-KM1}
We have
\[K^{(2)}(s,F_f,\chi)=c \sum_{d=1}^{\infty}C(d,f)\chi(d)d^{-s+k+\frac 92},\]
where $c$ is as in Theorem \ref{th.mass-formula2}.
\end{theorem}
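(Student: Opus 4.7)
The plan is to reduce the sum defining $K^{(2)}(s,F_f,\chi)$ to a sum over $\calm'(\ZZ_p)$-orbits at each prime by grouping $\calm'(\ZZ)$-orbits into genera, applying the mass formula (Theorem~\ref{th.mass-formula2}) within each genus, and then invoking the local-global principle (Proposition~\ref{prop.local-global-exceptional-Jordan}) to factor the resulting sum over genera into an Euler product.

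First, I would observe that each quantity appearing in the summand of $K^{(2)}(s,F_f,\chi)$ is a genus invariant. Indeed, $\det T$ is invariant under $\calm'(\ZZ_p)$ for every $p$, and since $\widetilde f_T^p(X)$ depends only on the $\calm(\ZZ_p)$-orbit of $T$ (this being built from the Siegel series $S_p(s,T)$, which is defined via $\bfe_p((T,T'))$ invariantly), the Fourier coefficient $a_{F_f}(T)=\det(T)^{(2k-9)/2}\prod_{p|\det T}\widetilde f_T^p(\alpha_p)$ is a genus invariant. Thus, choosing a representative $T_\calg$ for each genus $\calg$ and grouping, I can write
\[
K^{(2)}(s,F_f,\chi)=\sum_{\calg}\frac{\chi(\det T_\calg)\,a_{F_f}(T_\calg)}{\det(T_\calg)^{s}}\sum_{T'\in\calg/\calm'(\ZZ)}\frac{1}{\epsilon(T')},
\]
where $\calg$ ranges over genera in $\frkJ(\ZZ)_{>0}/\approx$.

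Next, I apply Theorem~\ref{th.mass-formula2} to the inner sum, yielding $\mathrm{Mass}(T_\calg)=c(\det T_\calg)^{9}/\prod_p\beta_p(T_\calg)$. Substituting the explicit form of $a_{F_f}(T)$ and combining powers of $\det T_\calg$ gives $-s+\tfrac{2k-9}{2}+9=-s+k+\tfrac{9}{2}$, so
\[
K^{(2)}(s,F_f,\chi)=c\sum_{d=1}^{\infty}\chi(d)\,d^{-s+k+9/2}\sum_{\calg:\det T_\calg=d}\prod_{p}\frac{\widetilde f_{T_\calg}^{p}(\alpha_p)}{\beta_p(T_\calg)}.
\]
Here I use that for $p\nmid d$ the orbit $T$ is unimodular so that $\widetilde f_T^p(X)=1$ and $\beta_p(T)=1$, making the product over all $p$ genuinely finite and equal to $\prod_{p|d}$.

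Finally, by Proposition~\ref{prop.local-global-exceptional-Jordan}, the set of genera of determinant $d$ is in bijection with $\JJ(d)=\prod_p(\frkJ(d,\ZZ_p)/\calm'(\ZZ_p))$; since for almost all $p$ the local factor consists of a single orbit contributing $1$, the sum over genera decouples into a product over primes:
\[
\sum_{\calg:\det T_\calg=d}\prod_{p}\frac{\widetilde f_{T_\calg}^{p}(\alpha_p)}{\beta_p(T_\calg)}=\prod_{p}\sum_{T\in\frkJ(d,\ZZ_p)/\calm'(\ZZ_p)}\frac{\widetilde f_{T}^{p}(\alpha_p)}{\beta_p(T)}=\prod_{p}\lambda_p(d,\alpha_p)=C(d;f),
\]
which yields the claim.

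The main obstacle, as I see it, is justifying the interchange of sum and product rigorously: one must verify that for $p\nmid d$ the local sum $\lambda_p(d,\alpha_p)$ truly reduces to $1$ (using unimodularity and triviality of the Siegel series and local density on the single orbit), and that the local-global principle of Proposition~\ref{prop.local-global-exceptional-Jordan} does produce exactly the genera of determinant $d$ so that the identification with $\JJ(d)$ is compatible with the mass-weighted measure. Once these local normalizations and the factorization $a_{F_f}(T)=\det(T)^{(2k-9)/2}\prod_p\widetilde f_T^p(\alpha_p)$ are confirmed, the remaining steps are bookkeeping.
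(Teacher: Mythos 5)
Your proposal is correct and follows essentially the same route as the paper: grouping the $\calm'(\ZZ)$-orbits into genera, applying the mass formula of Theorem \ref{th.mass-formula2} to replace the inner sum by $\mathrm{Mass}(T)$, combining the exponents of $\det T$ to get $-s+k+\tfrac92$, and then using Proposition \ref{prop.local-global-exceptional-Jordan} to factor the sum over genera of fixed determinant into the Euler-type product $C(d;f)=\prod_p\lambda_p(d,\alpha_p)$. The additional care you flag about the local factors being $1$ for $p\nmid d$ is a reasonable point of rigor that the paper leaves implicit, but it does not constitute a different argument.
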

\begin{proof}
Let $\calg =\frkJ(\ZZ)_{>0}/\!\approx$ be the set of all genera of $\frkJ(\ZZ)_{>0}$.
We note the Fourier coefficient $a_{F_f}(T)$ is uniquely determined by $\calg(T)$. Hence, by the Mass formula \cite[Theorem 3.8]{K-K-Y}, we have
\begin{align*}
K^{(2)}(s,F_f,\chi)&=\sum_{T \in \calg}\sum_{T' \in \calg(T)/\calm'(\ZZ)} (\det T')^{-s} {a_{F_f}(T') \chi(\det T')\over \epsilon(T')}=\sum_{T \in \calg}\mathrm{Mass}(T) \frac {a_{F_f}(T) \chi(\det T)}{(\det T)^{s}} \\
&=c\sum_{T \in \calg}  (\det T)^{k+\frac 92-s}\chi(\det T)\prod_p { \widetilde f_T^p(\alpha_p) \over \beta_p(T)}\\
&=c\sum_{d=1}^{\infty}  d^{-s+\frac 92+k} \chi(d)\sum_{ T \in \frkJ(d,\ZZ)/\prod\calm'(\ZZ_p)} \prod_p{ \widetilde f_T^p(\alpha_p) \over \beta_p(T)}.
\end{align*}
Thus the assertion follows from \cite[Proposition 3.10]{K-K-Y}.
\end{proof}

For $d \in \ZZ_p^{\times}$, define a formal power series $H_p(d;X,t)$ as
\[H_p(d;X,t)=\sum_{m=0}^{\infty} \lambda_p(p^md,X)t^{m}.\]
As in \cite[Lemma 5.2]{K-K-Y}, we can show that
$\lambda_p(d;X)$ is determined by $\ord_p(d)$, and hence $H_p(d;X,t)$ does not depend on the choice of $d \in \ZZ_p^{\times}$, and
we write it as $H_p(X,t)$. 
Therefore, 
\begin{theorem} \label{th.local-global-KM2}
We have
\[K^{(2)}(s,F_f,\chi)=c\prod_p H_p(\alpha_p,\chi(p)p^{-s+9/2+k}).\]
\end{theorem}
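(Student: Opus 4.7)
The plan is to start from the Dirichlet series expression
\[K^{(2)}(s,F_f,\chi) = c \sum_{d=1}^\infty C(d,f)\, \chi(d)\, d^{-s+k+9/2}, \qquad C(d,f) = \prod_{p<\infty} \lambda_p(d,\alpha_p),\]
of Theorem \ref{th.local-global-KM1}, and to convert it into an Euler product, identifying the $p$-th factor with $H_p(\alpha_p,\chi(p)p^{-s+k+9/2})$. The argument is essentially formal once two ingredients are secured: $(i)$ $\lambda_p(d,X)$ depends only on $\ord_p(d)$, and $(ii)$ $\lambda_p(d,\alpha_p)=1$ whenever $p\nmid d$ (which ensures that the product defining $C(d,f)$ is finite and that the multiplicative summand below satisfies $\phi(1)=1$).

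Statement $(i)$ is asserted in the paragraph preceding the theorem, and I would prove it along the lines of \cite[Lemma 5.2]{K-K-Y}: multiplication by a $p$-adic unit $u$ induces a bijection between $\frkJ(d,\ZZ_p)/\calm'(\ZZ_p)$ and $\frkJ(ud,\ZZ_p)/\calm'(\ZZ_p)$, preserves $\beta_p(T)$ because $\calu_T=\calu_{uT}$ (cf.\ Proposition \ref{prop.reinterpolation-of-local-density}), and preserves $\widetilde f_T^p(X)$ through the functional equation \eqref{functional} together with the explicit scaling behavior of the local Siegel series. For $(ii)$, when $p\nmid d$ the set $\frkJ(d,\ZZ_p)/\calm'(\ZZ_p)$ consists of a single unimodular class $T$, for which a direct computation of the unramified Siegel polynomial combined with Proposition \ref{prop.reinterpolation-of-local-density} and the order of the residual group of type $F_4$ yields $\widetilde f_T^p=1$ and $\beta_p(T)=1$, hence $\lambda_p(1,\alpha_p)=1$.

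Granted $(i)$ and $(ii)$, the summand $\phi(d):=C(d,f)\chi(d)d^{-s+k+9/2}$ is a multiplicative arithmetic function with $\phi(1)=1$ and $\phi(p^m)=\lambda_p(p^m,\alpha_p)\chi(p)^m p^{m(-s+k+9/2)}$. Absolute convergence in a right half-plane follows from Hecke's bound $a_F(T)\ll(\det T)^{k/2}$, exactly as in the proof of Theorem \ref{KM-general}. Standard Euler factorization then gives
\[\sum_{d=1}^\infty \phi(d)=\prod_p\sum_{m=0}^\infty \phi(p^m)=\prod_p H_p(\alpha_p,\chi(p)p^{-s+k+9/2}),\]
which, combined with Theorem \ref{th.local-global-KM1}, proves the claim. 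The main obstacle will be the clean verification of the normalization $\lambda_p(1,\alpha_p)=1$ at every prime, since ingredient $(i)$ is a bookkeeping exercise in the spirit of the cited lemma; once both are in hand the factorization is routine.
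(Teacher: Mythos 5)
Your overall route is the same as the paper's: Theorem \ref{th.local-global-KM2} is obtained from Theorem \ref{th.local-global-KM1} by rearranging the Dirichlet series $\sum_d C(d,f)\chi(d)d^{-s+k+9/2}$ into an Euler product, using the fact that $\lambda_p(d,X)$ depends only on $\ord_p(d)$ (your ingredient $(i)$, which the paper also invokes via \cite[Lemma 5.2]{K-K-Y}). However, your ingredient $(ii)$ is false, and it is the step on which you hang the factorization. For $p\nmid d$ the class $T$ is indeed unimodular and $\widetilde f_T^p=1$, but $\beta_p(T)\neq 1$: by Proposition \ref{prop.reinterpolation-of-local-density} with $n=1$, $\beta_p(T)=p^{-52}\#\calu_T(\ZZ/p\ZZ)=(1-p^{-2})(1-p^{-6})(1-p^{-8})(1-p^{-12})$, the local density of the $F_4$-stabilizer, so $\lambda_p(1,\alpha_p)=\{(1-p^{-2})(1-p^{-6})(1-p^{-8})(1-p^{-12})\}^{-1}\neq 1$. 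This is exactly the constant term $H_p(X,0)$ in Theorem \ref{th.explicit-local-KM}, and it is the source of the factor $\zeta(2)\zeta(6)\zeta(8)\zeta(12)$ in Theorem \ref{th.explicit-KM-2}; if your normalization $\lambda_p(1,\alpha_p)=1$ were correct, that zeta factor would not appear. Consequently your function $\phi$ is not multiplicative and $\phi(1)=C(1,f)=\prod_p\lambda_p(1,\alpha_p)=\zeta(2)\zeta(6)\zeta(8)\zeta(12)\neq 1$; likewise $\phi(p^m)=C(p^m,f)\chi(p)^mp^{m(-s+k+9/2)}$ carries the extra factor $\prod_{q\neq p}\lambda_q(1,\alpha_q)$, so your identification of $\phi(p^m)$ with the single local term is also off.

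The repair is straightforward and is what the paper implicitly does: one does not need the local constant term to equal $1$, only that the double array $\prod_p\sum_{m\ge 0}\lambda_p(p^m,\alpha_p)\bigl(\chi(p)p^{-s+k+9/2}\bigr)^m$ converges absolutely in a right half-plane. Since $\prod_p\lambda_p(1,\alpha_p)$ converges (to $\zeta(2)\zeta(6)\zeta(8)\zeta(12)$) and the tails are controlled as in your convergence argument, the identity
\[
\sum_{d=1}^{\infty}\Bigl(\prod_p\lambda_p(p^{\ord_p(d)},\alpha_p)\Bigr)\chi(d)d^{-s+k+9/2}
=\prod_p\sum_{m=0}^{\infty}\lambda_p(p^m,\alpha_p)\bigl(\chi(p)p^{-s+k+9/2}\bigr)^m
=\prod_p H_p\bigl(\alpha_p,\chi(p)p^{-s+k+9/2}\bigr)
\]
follows by expanding the product over $p$ and matching the term indexed by $(m_p)_p$ with $d=\prod_p p^{m_p}$, with no appeal to $\phi(1)=1$. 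With $(ii)$ replaced by this convergence statement, your argument coincides with the paper's.
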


Recall the formula for $\widetilde f_T^p(X)$ from \cite[Corollary 7.2]{K-K-Y}:
For $T=p^{m_1} \bot p^{m_1+m_2} \bot p^{m_1+m_3}$ with $0 \le m_1, 0  \le m_2 \le m_3$, 
\begin{align*}
&\widetilde f_T^p(X)={X^{-m_2-m_3-3m_1} \over (1-X^2)(1-p^{4}X^2)(1-p^8X^2)}+ {X^{m_2+m_3+3m_1} \over (1-X^{-2})(1-p^{4}X^{-2})(1-p^8X^{-2})} \\
&\phantom{xxxx} -{p^{8m_1+8}X^{-m_1-m_2-m_3+2} \over (1-X^2)(1-p^{4}X^2)(1-p^8X^2)}- {p^{8m_1+8}X^{m_1+m_2+m_3-2} \over (1-X^{-2})(1-p^{4}X^{-2})(1-p^8X^{-2})} \\
&\phantom{xxxx} -{p^{8m_1+4(m_2+1)}X^{-m_3+m_2-m_1+2} \over (1-X^2)^2(1-p^4X^2)}-{p^{8m_1+4(m_2+1)}X^{m_3-m_2+m_1-2} \over (1-X^{-2})^2(1-p^4X^{-2})}\\
&\phantom{xxxx} -{p^{8m_1+4m_2}X^{m_3-m_2-m_1+2} \over (1-X^2)^2(1-p^{-4}X^2)}-{p^{8m_1+4m_2}X^{-m_3+m_2+m_1-2} \over (1-X^{-2})^2(1-p^{-4}X^{-2})}.
\end{align*}

Now we have
\begin{theorem} \label{th.explicit-local-KM}
\begin{align*}
H_p(X,t) =\{(1-p^{-2})(1-p^{-6})(1-p^{-8})(1-p^{-12})\}^{-1} {1 \over \prod_{i=1}^3 (1-p^{-4i+3}X^{-1}t)(1-p^{-4i+3}X t)}.
\end{align*}
\end{theorem}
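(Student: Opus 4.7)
The plan is to compute $H_p(X,t)$ by direct substitution of the known explicit formulas, using the classification of $\mathfrak{J}(\mathbb{Z}_p)^{\rm ns}/\calm'(\mathbb{Z}_p)$. By the structure theory for $\frkJ$ over $\mathbb{Z}_p$ recalled from \cite{K-K-Y}, every orbit in $\frkJ(\ZZ_p)^{\rm ns}/\calm'(\ZZ_p)$ has a unique representative of the form
\[
T(m_1,m_2,m_3)=p^{m_1}\bot p^{m_1+m_2}\bot p^{m_1+m_3}, \qquad m_1\ge 0,\ 0\le m_2\le m_3,
\]
and for such $T$ we have $\ord_p(\det T)=3m_1+m_2+m_3$. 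Consequently
\[
H_p(X,t)=\sum_{\substack{m_1\ge 0\\ 0\le m_2\le m_3}} \frac{\widetilde f^p_{T(m_1,m_2,m_3)}(X)}{\beta_p(T(m_1,m_2,m_3))}\, t^{3m_1+m_2+m_3}.
\]
As a sanity check, the constant term ($m_1=m_2=m_3=0$) gives $\widetilde f^p_T(X)=1$ and $\beta_p(T)=p^{-52}|F_4(\mathbb{F}_p)|=(1-p^{-2})(1-p^{-6})(1-p^{-8})(1-p^{-12})$, matching the $t=0$ value of the claimed formula.

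Next I would plug in the eight-term expression for $\widetilde f^p_T(X)$ displayed just before the theorem, together with the explicit formula for $\beta_p(T(m_1,m_2,m_3))^{-1}$ derivable from Proposition~\ref{prop.reinterpolation-of-local-density} (or directly from \cite[\S6]{K-K-Y}), which factors as $\{(1-p^{-2})(1-p^{-6})(1-p^{-8})(1-p^{-12})\}^{-1}$ times a monomial in $p^{m_1},p^{m_2},p^{m_3}$ coming from the volume of the orbit of $T$. Performing the change of variables $(a,b,c)=(m_1,m_2,m_3-m_2)$, the indexing set becomes $\mathbb Z_{\ge 0}^3$, the monomial $t^{3m_1+m_2+m_3}$ becomes $t^{3a+2b+c}$, and each of the eight summands in $\widetilde f^p_T(X)$ becomes a pure monomial $p^{\alpha_1 a+\alpha_2 b+\alpha_3 c}X^{\beta_1 a+\beta_2 b+\beta_3 c}$ (times an $X$-denominator independent of $a,b,c$). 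Thus the whole sum decouples into eight triple geometric series, each of which can be summed in closed form.

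The final step is the algebraic combination: one must check that the sum of the resulting eight rational functions of $X$ and $t$ equals the product $\{(1-p^{-2})(1-p^{-6})(1-p^{-8})(1-p^{-12})\}^{-1}\prod_{i=1}^3 (1-p^{-4i+3}X^{-1}t)^{-1}(1-p^{-4i+3}Xt)^{-1}$. Since both sides are rational functions in $(X,t)$ and the denominators on the right-hand side are $(1-p^{-4i+3}X^{\pm 1}t)$ for $i=1,2,3$, I would verify the identity by clearing denominators and comparing numerators, or equivalently by computing residues at the six poles $t=p^{4i-3}X^{\pm 1}$. Observe that the pole structure is forced by the functional equation $\widetilde f^p_T(X^{-1})=\widetilde f^p_T(X)$ (which produces the symmetry $X\leftrightarrow X^{-1}$ of $H_p$) together with the positions of the eight poles $X=\pm 1, \pm p^{\pm 2}$ appearing in the denominators of the eight summands.

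The main obstacle is the bookkeeping in this last algebraic collapse: a priori one has eight rational functions in two variables, each with denominators like $(1-X^2)^2(1-p^4X^2)$ or $(1-X^2)^2(1-p^{-4}X^2)$, and the cancellations that produce the clean six-factor product on the right are delicate. I expect this to be tractable by exploiting the $X\leftrightarrow X^{-1}$ symmetry to pair up the eight terms into four symmetric pieces, and then handling each symmetric pair by partial fractions in $X$. The analogous computation in the Siegel case (degree $3$) was carried out in \cite{Kat2}, and the $F_4/E_{7,3}$ geometry produces the same shape of answer with the shifts $4i-3$ for $i=1,2,3$ reflecting the three Hermitian $2\times 2$ blocks inside a $3\times 3$ Cayley Hermitian matrix.
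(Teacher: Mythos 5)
Your overall architecture matches the paper's: expand $H_p$ over the orbit representatives $p^{m_1}\bot p^{m_1+m_2}\bot p^{m_1+m_3}$, substitute the eight-term formula for $\widetilde f_T^p(X)$, sum each of the eight pieces separately, and then verify a rational-function identity in $(X,t)$. However, there is a genuine gap at the central step. You assert that $\beta_p(p^{m_1}\bot p^{m_1+m_2}\bot p^{m_1+m_3})^{-1}$ equals $\{(1-p^{-2})(1-p^{-6})(1-p^{-8})(1-p^{-12})\}^{-1}$ times a pure monomial in $p^{m_1},p^{m_2},p^{m_3}$, so that each of the eight pieces decouples into a product of three geometric series with numerator $1$. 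This is false: the local density depends not only on the exponents but on the degeneration type of the triple (which of the conditions $m_2=0$, $m_2=m_3$, etc.\ hold), because the special fiber of the stabilizer group scheme $\calu_T$ changes type across the strata of the cone $\{m_1\ge 0,\ 0\le m_2\le m_3\}$. Concretely, the generating function used in the paper (quoted from \cite[Lemma 7.3]{K-K-Y}) is
\begin{align*}
P(X_i,Y_i,Z_i,t)=\{(1-p^{-2})(1-p^{-6})(1-p^{-8})(1-p^{-12})\}^{-1}\,
\frac{1+(p^{-5}+p^{-9})tZ_i+(p^{-14}+p^{-8})t^2Y_iZ_i+p^{-23}t^3Y_iZ_i^2}{(1-p^{-27}X_it^3)(1-p^{-10}Y_iZ_it^2)(1-p^{-1}Z_it)},
\end{align*}
whose nontrivial numerator is exactly the correction coming from the boundary strata; under your monomial hypothesis the numerator would be $1$. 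So the eight closed forms you would feed into the final ``algebraic collapse'' are wrong, and the cancellations you are counting on would not come out.

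The rest of your plan is salvageable once the correct $P(X_i,Y_i,Z_i,t)$ is used, but be aware that the final verification is where the paper spends its effort: it first combines the terms $i=3,4$, then $i=2,3,4$ (and symmetrically $i=6,7,8$ and $i=5,\dots,8$) by hand, and then, rather than clearing denominators globally, observes that
$K_p(X,t)=(1-p^{-2})(1-p^{-6})(1-p^{-8})(1-p^{-12})\,H_p(X,t)\prod_{i=1}^3(1-p^{-4i+3}X^{-1}t)(1-p^{-4i+3}Xt)$
is a polynomial in $t$ of degree at most $3$ taking the value $1$ at the four points $t=p^9X^{\pm1},\,p^5X^{\pm1}$, hence is identically $1$. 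That interpolation trick is considerably lighter than the residue/numerator comparison you propose, and you may want to adopt it; but the indispensable missing ingredient in your write-up is the correct, stratified form of the local density (equivalently, \cite[Lemma 7.3]{K-K-Y}).
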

\begin{proof}
For $i=1,2,3,4$, put
\begin{align*}
&A_1(X)=\{(1-X^2)(1-p^{4}X^2)(1-p^8X^2)\}^{-1} \\
&A_2(X)=-p^8X^2\{(1-X^2)(1-p^{4}X^2)(1-p^8X^2)\}^{-1} \\
&A_3(X)=-p^4X^2\{(1-X^2)^2(1-p^4X^2)^{-1} \\
&A_4(X)=-X^2\{(1-X^2)^2(1-p^{-4}X^2)\}^{-1} ,
\end{align*}
and for $i=5,6,7,8$, put $A_i(X)=A_{i-4}(X^{-1})$.
For $i=1,2,3,4$, we also define $X_i=X_i(X),Y_i=Y_i(X),Z_i=Z_i(X)$ as
\begin{align*}
& X_1=X^{-3}, \quad X_2=X_3=X_4=p^8X^{-1}, \\
& Y_1=Y_2=X^{-1},\quad Y_3=p^4X,\quad Y_4=p^4X^{-1}, \\
& Z_1=Z_2=Z_3=X^{-1}, \quad Z_4=X,
\end{align*}
and for $i=5,6,7,8$, put $X_i(X)=X_{i-4}(X^{-1})$, $Y_i(X)=Y_{i-4}(X^{-1})$, and $Z_i(X)=Z_{i-4}(X^{-1})$. Then, as in \cite[Theorem 7.4]{K-K-Y}, we have
\begin{align*}
&H_p(X,t)=\sum_{1 \le i \le 8} A_i(X)P(X_i,Y_i,Z_i,t),
\end{align*}
where
\[P(X_i,Y_i,Z_i,t)=\sum_{m_1 \ge 0,0 \le m_2 \le m_3} {t^{3m_1+m_2+m_3} X_i^{m_1}Y_i^{m_2}Z_i^{m_3} \over \beta_p(p^{m_1} \bot p^{m_1+m_2} \bot p^{m_1+m_3})}\]
for $i=1,\ldots,8$.
By \cite[Lemma 7.3]{K-K-Y}, we have
\begin{align*}
&P(X_i,Y_i,Z_i,t)=\{(1-p^{-2})(1-p^{-6})(1-p^{-8})(1-p^{-12})\}^{-1}\\
&\phantom{xxxxxxxxxxx} \times {1+(p^{-5}+p^{-9})t Z_i+(p^{-14}+p^{-8})t^2Y_iZ_i+p^{-23}t^3Y_iZ_i^2 \over 
(1-p^{-27}X_it^3)(1-p^{-10}Y_iZ_it^2)(1-p^{-1}Z_i t)}.
\end{align*}
From now on, put $P_i(X,t)=(1-p^{-2})(1-p^{-6})(1-p^{-8})(1-p^{-12})P_i(X_i,Y_i,Z_i,t)$. Then,
\begin{align*}P_1(X,t)={1 \over (1-p^{-9}X^{-1}t)(1-p^{-5}X^{-1}t)(1-p^{-1}X^{-1}T)}, \tag{6.1}
\end{align*}
\[P_2(X,t)={1+p^{-9}X^{-1}t+p^{-18}X^{-2}t^2 \over (1-p^{-19}X^{-1}t^3)(1-p^{-5}X^{-1}t)(1-p^{-1}X^{-1}t)}.\]
Moreover, we have
\[P_3(X,t)={1 +(p^{-5}+p^{-9})t X^{-1}+(p^{-10}+p^{-14})t^2  +p^{-19}t^3 X^{-1} \over (1-p^{-19}X^{-1}t^3)(1-p^{-6}t^2)(1-p^{-1}X^{-1}t)},\]
\[P_4(X,t)={1 +(p^{-5}+p^{-9})t X+(p^{-10}+p^{-14})t^2  +p^{-19}t^3 X \over (1-p^{-19}X^{-1}t^3)(1-p^{-6}t^2)(1-p^{-1} Xt)}.\]
We prove the following equality.
\begin{align*}
&A_3(X)P_3(X,t)+A_4(X)P_4(X)=-X^2\{(1-X^2)(1-p^{-4}X^2)(1-p^4X^2)\}^{-1}\\
&\phantom{xxxxxxxxx} \times {(1+p^4)(1+p^{-14}t^2)+(X^{-1}+X)p^{-5}t \over (1-p^{-19}X^{-1}t^3)(1-p^{-1}X^{-1}t)(1-p^{-1}Xt)}.
\end{align*}
To prove this, put
\begin{align*}
&Q_{2,3}(X,t)=-X^{-2}(1-X^2)^2(1-p^4 X^2)(1-p^{-4}X^2)\\
&\phantom{xxxxx} \times (1-p^{-19}X^{-1}t^3)(1-p^{-6}t^2)(1-p^{-1}X t)(1-p^{-1}X^{-1}t)\\
&\phantom{xxxxx} \times  (A_3(X)P_3(X,t)+A_4(X)P_4(X,t)).
\end{align*}
Then, by a simple computation, we have
\begin{align*}
&Q_{2,3}(X,t)=(p^4-X^2)(1-p^{-1}X t)\Big( 1 +(p^{-5}+p^{-9})t X^{-1}+(p^{-10}+p^{-14})t^2  +p^{-19}t^3 X^{-1} \Big)\\
&\phantom{xxx} +(1-p^4 X^2)(1-p^{-1}X^{-1}t) \Big(1 +(p^{-5}+p^{-9})t X+(p^{-10}+p^{-14})t^2  +p^{-19}t^3 X\Big)\\
&\phantom{xxx} =(p^4-X^2) \Big( 1+((p^{-5}+p^{-9}) X^{-1}-p^{-1}X)t+(p^{-14}-p^{-6})t^2 \\
&\phantom{xxx} +(p^{-19}X^{-1}-(p^{-11}+p^{-15})X)t^3-p^{-20}t^4 \Big)\\
&\phantom{xxx} +(1-p^4X^2) \Big( 1+((p^{-5}+p^{-9}) X-p^{-1}X^{-1})t+(p^{-14}-p^{-6})t^2 \\
& \phantom{xxx} +(p^{-19}X-(p^{-11}+p^{-15})X^{-1})t^3-p^{-20}t^4 \Big)\\
&\phantom{xxx} =(1-X^2)\Big\{(1+p^4)\Big(1+(p^{-14}-p^{-6})t^2-p^{-20}t^4\Big) 
+(X^{-1}+X)\Big(p^{-5}t-p^{-11}t^3 \Big)\Big\}\\
&\phantom{xxx} =(1-X^2)(1-p^{-6}t^2) \Big((1+p^4)(1+p^{-14}t^2)+(X^{-1}+X)p^{-5}t) \Big).
\end{align*}
This proves the above equality.
Hence, again by a simple computation,  we have
\begin{align*}
&A_2(X)P_2(X,t)+A_3(X)P_3(X,t)+A_4(X)P_4(X) \tag{6.2} \\
&=-X^2\{(1-X^2)(1-p^{-4}X^2)(1-p^8X^2)\}^{-1} {(1 +p^4+p^8)  \over (1-p^{-5}X^{-1}t)(1-p^{-1}X^{-1}t)(1-p^{-1}Xt)}. 
\end{align*}
Similarly we have
\begin{align*}
P_5(X,t)={1 \over (1-p^{-9}Xt)(1-p^{-5}X t)(1-p^{-1} X t)}, \tag{6.3}
\end{align*}
\begin{align*}
&A_6(X)P_6(X,t)+A_7(X)P_7(X,t)+A_8(X)P_8(X) \tag{6.4} \\
&=-X^{-2}\{(1-X^{-2})(1-p^{-4}X^{-2})(1-p^8X^{-2})\}^{-1} {1 +p^4+p^8   \over (1-p^{-5}X t)(1-p^{-1}X^{-1}t)(1-p^{-1}Xt)}. 
\end{align*}
Put
\begin{align*}
K_p(X,t)= (1-p^{-2})(1-p^{-6})(1-p^{-8})(1-p^{-12}) H_p(X,t) \prod_{i=1}^3 (1-p^{-4i+3}X^{-1}t)(1-p^{-4i+3}X t).
\end{align*}
 Then, by (6.1), (6.2), (6.3) and (6.4), we have
\begin{align*}
&K_p(X,t)={(1-p^{-9} Xt)(1-p^{-5}X t)(1-p^{-1} Xt) \over (1-X^2)(1-p^4 X^2)(1-p^8 X^2)} \\
&\phantom{xxxxxx} -{X^2(1+p^4+p^8)(1-p^{-9} Xt)(1-p^{-9}X^{-1} t)(1-p^{-5} Xt) \over (1-X^2)(1-p^{-4} X^2)(1-p^8 X^2)}\\
&\phantom{xxxxxx} +{(1-p^{-9} X^{-1}t)(1-p^{-5}X^{-1} t)(1-p^{-1} X^{-1}t) \over (1-X^{-2})(1-p^4 X^{-2})(1-p^8 X^{-2})} \\
&\phantom{xxxxxx} -{X^{-2}(1+p^4+p^8)(1-p^{-9} Xt)(1-p^{-9}X^{-1} t)(1-p^{-5} X^{-1}t) \over (1-X^{-2})(1-p^{-4} X^{-2})(1-p^8 X^{-2})}.
\end{align*}
We note that $K_p(X,t)$ is a polynomial in $t$ of degree at most $3$, and $K_p(X,p^9X^{\pm 1})=K_p(X,p^5X^{\pm 1})=1$.
Therefore, $K_p(X,t)=1$ as a polynomial  in $t$. This proves the assertion.
\end{proof}

\section{Proof of Theorems \ref{th.explicit-KM-2} and \ref{th.explicit-KM-1} and some rationality result}

Theorem \ref{th.explicit-KM-2} and \ref{th.explicit-KM-1} are immediate consequences of Theorems 4.10, \ref{th.local-global-KM2}, and \ref{th.explicit-local-KM}. 

By the functional equation of $L(s,\pi_f,\chi)$, we have the functional equation of $K^{(1)}(s,F_f,\chi)$ of the form $s \longrightarrow  2k-s$, which is compatible with that of the general case (Theorem \ref{KM-general}).

From the mass formula (Theorem \ref{th.mass-formula2}), 
$c={5!7!11! \over (2\pi)^{28}}$. So
$c\zeta(2)\zeta(6)\zeta(8)\zeta(12)=\frac {691}{2^{15}\cdot 3^6\cdot 5^2\cdot 7^2\cdot 13}\in\Bbb Q$. 

Let $L(s,f,\chi)$ be the unnormalized $L$-function, and $K_f, K_\chi$ be Hecke fields.
Then since $L(s,\pi_f,\chi)=L(s+k-\frac 92,f,\chi)$, from Theorem 1.1,
$$
K^{(2)}(s,F_f,\chi) =c \zeta(2)\zeta(6)\zeta(8)\zeta(12)\times \prod_{i=1}^3 L(s+4i-12,f,\chi).
$$
Recall the following rationality result of Shimura \cite{Sh}:
\begin{theorem} \label{th.rationality-Hecke-L}
For a Dirichlet character $\chi$, let $A(m,f,\chi)=(2\pi \sqrt{-1})^{-m} W(\chi)^{-1} L(m,f,\chi)$, and $u^+=A(2k-9,f,\phi), u^-=A(2k-9,f,\phi')$, where $\phi,\phi'$ are any fixed real odd (even, resp.) characters. Then
$$
A(m,f,\chi)\in \begin{cases} u^+ K_f K_\chi, &\text{if $\chi(-1)=(-1)^m$}\\ u^- K_f K_\chi, &\text{if $\chi(-1)=(-1)^{m-1}$}\end{cases},
$$
for every positive integer $m<2k-8$, and
$\pi W(\chi)\langle f,f\rangle \sqrt{-1}\in u^+ u^- K_f$.
\end{theorem}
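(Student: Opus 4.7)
The plan is to follow Shimura's classical approach via the Eichler–Shimura isomorphism and the $\QQ$-rational structure on parabolic cohomology. Write $w = 2k-8$ for the weight of $f$; the critical integers are $1 \le m \le w-1$. First, express $L(m,f,\chi)$ as a period integral: the identity $f_\chi(\tau) = W(\bar\chi)^{-1} \sum_{a \bmod N} \bar\chi(a) f(\tau + a/N)$ combined with Mellin inversion shows that $(2\pi\sqrt{-1})^{-m} \Gamma(m) L(m,f,\chi)$ is, up to $W(\bar\chi)^{-1}$, a $\bar\chi$-weighted sum of integrals $\int_0^{\sqrt{-1}\infty} f(\tau)(\tau - a/N)^{m-1} d\tau$, which are translates of coefficients of the classical period polynomial of $f$.

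Next, encode these integrals cohomologically. The map $\gamma \mapsto c_f(\gamma)(X) = \int_{z_0}^{\gamma z_0} f(\tau)(X-\tau)^{w-2} d\tau$ defines a $1$-cocycle on $SL_2(\ZZ)$ with values in the symmetric-power representation $V_{w-2}$, and the Eichler–Shimura isomorphism identifies $H^1_{\mathrm{par}}(SL_2(\ZZ), V_{w-2}(\CC))$ with $S_w \oplus \overline{S_w}$. This cohomology carries a $\QQ$-structure inherited from $V_{w-2}(\QQ)$ and splits into $\pm$-eigenspaces under the involution induced by $\mathrm{diag}(-1,1)$; the $f$-isotypic component of each eigenspace is one-dimensional over $K_f$. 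Choosing $K_f$-rational generators $v^\pm$, the scalars in the expansion $[c_f] = u^+ v^+ + u^- v^-$ match (up to $K_f^\times$) the stated periods $u^\pm$ after specialization to $m = w-1$ with the real characters $\phi,\phi'$.

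The twisted critical value $A(m,f,\chi)$ is then recovered by pairing $[c_f]$ against the modular-symbol cycle $\sum_{a} \bar\chi(a)\{a/N, \sqrt{-1}\infty\}$. The monomial $(X-\tau)^{w-2}$ contributes a coefficient of definite parity in $X^{w-1-m}$, and the involution multiplies the sum by $\chi(-1)$; hence the pairing lands in the $+$-eigenspace exactly when $\chi(-1) = (-1)^m$, and in the $-$-eigenspace otherwise. Since $v^\pm$ are $K_f$-rational and $\bar\chi(a) \in K_\chi$, this yields $A(m,f,\chi) \in u^\pm K_f K_\chi$ with the claimed parity.

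For the final assertion, apply the cup-product pairing on parabolic cohomology with coefficients in $V_{w-2}$, which is $\QQ$-valued on $\QQ$-rational classes and which, evaluated on $[c_f] \cup [c_f]$, recovers $\pi\langle f, f\rangle$ up to a rational multiple of $\sqrt{-1}$ via the Shimura–Eichler formula. Expanding $[c_f] = u^+ v^+ + u^- v^-$ and using that the cup-product pairing of $v^+$ with $v^-$ lies in $K_f$ gives $\pi\langle f, f\rangle\sqrt{-1} \in u^+ u^- K_f$; the factor $W(\chi)$, which lies in $K_\chi$, is absorbed harmlessly. The main obstacle is the careful bookkeeping of signs, normalizations, and Gauss sums through the Eichler–Shimura isomorphism needed to reproduce the exact parity condition $\chi(-1) = (-1)^m$ versus $\chi(-1) = (-1)^{m-1}$, which is the technical heart of Shimura's argument in \cite{Sh}.
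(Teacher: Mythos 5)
The paper offers no proof of this theorem: it is quoted verbatim from Shimura \cite{Sh}, so there is no internal argument to compare yours against. Your sketch follows the Eichler--Shimura/modular-symbol route (essentially the Manin--Shimura period theorem), and for the first assertion it is a legitimate and essentially complete outline: writing $L(m,f,\chi)$ as a $\bar\chi$-weighted combination of period-polynomial coefficients, decomposing the cocycle class into $\pm$-eigencomponents with $K_f$-rational generators $v^{\pm}$, and tracking the parity through the involution does give $A(m,f,\chi)\in u^{\pm}K_fK_\chi$ with the stated parity rule, modulo the normalization bookkeeping you already flag.

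The last step, however, contains a genuine error. The cup product $H^1_{\mathrm{par}}\times H^1_{\mathrm{par}}\to H^2$ combined with the coefficient pairing on $\mathrm{Sym}^{w-2}$ (which is symmetric, $w-2$ being even) is alternating, so $[c_f]\cup[c_f]=0$; it cannot ``recover $\pi\langle f,f\rangle$.'' The Petersson norm arises from pairing the holomorphic class against the antiholomorphic one, i.e.\ $[c_f]\cup[\overline{c_f}]$ (equivalently, pairing the two eigencomponents of the real and imaginary parts against each other), together with the fact that, for $f$ normalized with coefficients in the totally real field $K_f$, one of $u^{+},u^{-}$ is real and the other purely imaginary up to $K_f$; only then does the product $u^{+}u^{-}$ emerge. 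Relatedly, the factor $W(\chi)$ in the final clause cannot be ``absorbed harmlessly'' into $u^{+}u^{-}K_f$, since $W(\chi)\notin K_f$ in general and the right-hand side does not depend on $\chi$; the clause should be read either without $W(\chi)$ or with $K_\chi$ adjoined (this appears to be a slip in the paper's transcription of Shimura's theorem, but a proof should not gloss over it). These two points need to be repaired before the final assertion is established.
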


Therefore, we have

\begin{theorem} \label{th.rationality-KM} 
Let $\chi$ be a Dirichlet character. 
For every integer $m$, $9\leq m\leq 2k-9$, 
$$(2\pi\sqrt{-1})^{-3m+12}W(\chi)^{-3} K^{(2)}(m,F_f,\chi)\in \begin{cases} (u^+)^3 K_f K_\chi, &\text{if $\chi(-1)=(-1)^m$}\\ (u^-)^3 K_f K_\chi, &\text{if $\chi(-1)=(-1)^{m-1}$}\end{cases}.
$$
\end{theorem}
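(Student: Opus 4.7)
The plan is to unfold $K^{(2)}(m,F_f,\chi)$ using the explicit formula of Theorem \ref{th.explicit-KM-2} and then apply Shimura's rationality theorem factor by factor. Start from
$$K^{(2)}(s,F_f,\chi)=c\zeta(2)\zeta(6)\zeta(8)\zeta(12)\prod_{i=1}^3 L(s-k-9/2+4i-3,\pi_f,\chi),$$
and use the normalization $L(s,\pi_f,\chi)=L(s+k-9/2,f,\chi)$ to rewrite this as
$$K^{(2)}(s,F_f,\chi)=c\zeta(2)\zeta(6)\zeta(8)\zeta(12)\prod_{i=1}^3 L(s+4i-12,f,\chi).$$
At $s=m$ the three arguments are $m-8$, $m-4$, and $m$. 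First I would check that all three lie in the critical strip for $f$: since $9\le m\le 2k-9$ we have $1\le m-8<m-4<m\le 2k-9<2k-8$, so Theorem \ref{th.rationality-Hecke-L} applies to each of them.

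Next, substitute $L(n,f,\chi)=(2\pi\sqrt{-1})^n W(\chi)A(n,f,\chi)$ into each factor. Since $\sum_{i=1}^3(m+4i-12)=3m-12$, the product becomes
$$K^{(2)}(m,F_f,\chi)=c\zeta(2)\zeta(6)\zeta(8)\zeta(12)\,(2\pi\sqrt{-1})^{3m-12} W(\chi)^3\prod_{i=1}^3 A(m+4i-12,f,\chi).$$
By Theorem \ref{th.mass-formula2}, the scalar $c\zeta(2)\zeta(6)\zeta(8)\zeta(12)=\tfrac{691}{2^{15}3^6 5^2 7^2 13}$ lies in $\QQ$. Dividing by $(2\pi\sqrt{-1})^{3m-12}W(\chi)^3$ therefore isolates a rational multiple of the product $\prod_{i=1}^3 A(m+4i-12,f,\chi)$.

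The key observation that makes the dichotomy uniform is that $4i-12$ is even for every $i$, so $(-1)^{m+4i-12}=(-1)^m$ for $i=1,2,3$. Hence if $\chi(-1)=(-1)^m$, each of the three Shimura values $A(m+4i-12,f,\chi)$ lies in $u^+K_fK_\chi$ and their product lies in $(u^+)^3 K_fK_\chi$; and if $\chi(-1)=(-1)^{m-1}$, each lies in $u^-K_fK_\chi$ and the product lies in $(u^-)^3 K_fK_\chi$. Combined with the rationality of the leading scalar, this yields exactly the statement of Theorem \ref{th.rationality-KM}. There is no real obstacle: the only points needing care are the critical-range containment (already automatic from the hypothesis $9\le m\le 2k-9$) and the uniformity of the parity condition across the three factors, which relies on the fact that the three shifts in Theorem \ref{th.explicit-KM-2} are spaced by multiples of $4$.
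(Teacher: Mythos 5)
Your proposal is correct and follows essentially the same route as the paper: the paper's own (largely implicit) proof likewise rewrites $K^{(2)}(m,F_f,\chi)$ via Theorem \ref{th.explicit-KM-2} as a rational multiple of $\prod_{i=1}^3 L(m+4i-12,f,\chi)$, notes that $c\,\zeta(2)\zeta(6)\zeta(8)\zeta(12)\in\QQ$, and applies Shimura's Theorem \ref{th.rationality-Hecke-L} to each factor, with the uniform parity across the three critical points $m-8$, $m-4$, $m$ coming exactly from the shifts being multiples of $4$. Your explicit verification that $9\le m\le 2k-9$ forces all three arguments into the admissible range $1\le n<2k-8$ is the only detail the paper leaves unstated, and you have it right.
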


For $K^{(1)}(s,F_f,\chi)$, we assume that $\chi$ is a primitive character mod $N$, and $\chi=\widetilde\chi^3$ for some $\widetilde\chi\in \widehat{(\ZZ/N\ZZ)^{\times}}$. Notice that if $\eta^3=1$, $\eta(-1)=1$. Hence

\begin{theorem} Let $\chi$ be a primitive character mod $N$. Then, for every integer $m$, $9\leq m\leq 2k-9$, 
$$(2\pi\sqrt{-1})^{-3m+12}W(\bar \chi) K^{(1)}(m,F_f,\chi)\in \begin{cases} (u^+)^3 K_f K_{\widetilde\chi}\Bbb Q(\sqrt{-3}), &\text{if $\widetilde\chi(-1)=(-1)^m$}\\ (u^-)^3 K_f K_{\widetilde\chi}\Bbb Q(\sqrt{-3}), &\text{if $\widetilde\chi(-1)=(-1)^{m-1}$}\end{cases}.
$$
In particular if $\phi(N)$ is not divisible by $3$, then 
$$(2\pi\sqrt{-1})^{-3m+12}W(\bar \chi) K^{(1)}(m,F_f,\chi)\in\begin{cases} (u^+)^3 K_f K_{\widetilde\chi}, &\text{if $\widetilde\chi(-1)=(-1)^m$}\\ (u^-)^3 K_f K_{\widetilde\chi}, &\text{if $\widetilde\chi(-1)=(-1)^{m-1}$}\end{cases}.$$
\end{theorem}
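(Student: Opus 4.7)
The plan is to reduce the final theorem to the rationality for $K^{(2)}$ (Theorem~\ref{th.rationality-KM}) via the decomposition in Theorem~\ref{explicit-twisted-Koecher-Maass-first-kind}, then carry out some Gauss and Jacobi sum bookkeeping. I would record two elementary observations about the formula
\[
K^{(1)}(m,F_f,\chi) = d_N \sum_{\eta\in\cald_N} K^{(2)}(m,F_f,\widetilde\chi\eta)\,J_\eta,\qquad J_\eta := J(\overline{\widetilde\chi\eta},\overline{\widetilde\chi\eta},\overline{\widetilde\chi\eta}).
\]
First, each $\widetilde\chi\eta$ is primitive mod $N$: if its conductor properly divided $N$, then so would that of its cube $\chi$, contradicting primitivity. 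Second, $\eta(-1)=1$ for every $\eta\in\cald_N$, because $\eta(-1)$ is simultaneously a cube and a square root of unity. Consequently $(\widetilde\chi\eta)(-1)=\widetilde\chi(-1)$ is independent of $\eta$, so when Theorem~\ref{th.rationality-KM} is applied to each $\widetilde\chi\eta$ the sign $u^\pm$ is uniform in the sum and matches the one in the final theorem.

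The central step is the Jacobi/Gauss sum identity
\[
W(\bar\chi)\,J_\eta \;=\; W(\overline{\widetilde\chi\eta})^3,
\]
which is Corollary~\ref{cor.explicit-generalized-Jacobi-sum} in the odd case. For general $N$ I would prove it by Fourier inversion on $\ZZ/N\ZZ$: writing $W(\overline{\widetilde\chi\eta})^3=\sum_d \zeta_N^d S_d$ with $S_d=\sum_{a+b+c\equiv d}\overline{\widetilde\chi\eta}(abc)$, the dual transform factors as the cube of a twisted Gauss sum of $\overline{\widetilde\chi\eta}$, which vanishes outside $(\ZZ/N\ZZ)^\times$ by primitivity of $\widetilde\chi\eta$; primitivity of $\chi=(\widetilde\chi\eta)^3$ then forces $S_d=0$ for $d\notin(\ZZ/N\ZZ)^\times$ and $S_d=\bar\chi(d)J_\eta$ otherwise, giving the identity. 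Combined with $W(\widetilde\chi\eta)W(\overline{\widetilde\chi\eta})=\widetilde\chi(-1)N$ (using $\eta(-1)=1$), this yields the rational relation
\[
W(\bar\chi)\,J_\eta\,W(\widetilde\chi\eta)^3 \;=\; \widetilde\chi(-1)\,N^3 \;\in\; \QQ.
\]

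Substituting this identity into the decomposition gives
\[
(2\pi\sqrt{-1})^{-3m+12}\,W(\bar\chi)\,K^{(1)}(m,F_f,\chi) = d_N\,\widetilde\chi(-1)\,N^3\sum_{\eta\in\cald_N}\Bigl[(2\pi\sqrt{-1})^{-3m+12}W(\widetilde\chi\eta)^{-3}K^{(2)}(m,F_f,\widetilde\chi\eta)\Bigr].
\]
By Theorem~\ref{th.rationality-KM} each bracketed term lies in $(u^\pm)^3 K_f K_{\widetilde\chi\eta}$ with the aforementioned uniform sign, and since $\eta^3=1$ the character $\eta$ takes values in $\mu_3\subset\QQ(\sqrt{-3})$, so $K_{\widetilde\chi\eta}\subset K_{\widetilde\chi}\QQ(\sqrt{-3})$; this establishes the general inclusion. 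When $3\nmid\phi(N)$ one has $l=1$, $\cald_N=\{1\}$, only $\eta=1$ contributes with $K_{\widetilde\chi\eta}=K_{\widetilde\chi}$, and the $\QQ(\sqrt{-3})$ factor drops out, yielding the particular case. The main obstacle is the Jacobi/Gauss identity when $2\mid N$: the iterative argument of Corollary~\ref{cor.explicit-generalized-Jacobi-sum} breaks because $\chi^2$ may fail to be primitive (for instance, when $\chi$ is a primitive character of order $4$ mod $16$, $\widetilde\chi^2$ has conductor only $8$), and the Fourier-inversion route sketched above circumvents this by using only the primitivity of $\chi$ and $\widetilde\chi\eta$.
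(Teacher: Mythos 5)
Your argument is correct, and its skeleton is the one the paper intends: decompose $K^{(1)}$ via Theorem \ref{explicit-twisted-Koecher-Maass-first-kind}, note that every $\eta\in\cald_N$ satisfies $\eta(-1)=1$ so the sign $u^{\pm}$ is uniform over the sum, apply Theorem \ref{th.rationality-KM} (equivalently, Shimura's theorem to each of the three $L$-values) to each $K^{(2)}(m,F_f,\widetilde\chi\eta)$, and absorb the values of $\eta$ into $\QQ(\sqrt{-3})$. Where you genuinely depart from the paper is the key Gauss--Jacobi identity $W(\bar\chi)\,J(\overline{\widetilde\chi\eta},\overline{\widetilde\chi\eta},\overline{\widetilde\chi\eta})=W(\overline{\widetilde\chi\eta})^3$. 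The paper establishes this only for odd $N$ (Corollary \ref{cor.explicit-generalized-Jacobi-sum}), because its route factors the triple Jacobi sum through $J(\overline{\widetilde\chi\eta},(\overline{\widetilde\chi\eta})^2)$ and needs $(\widetilde\chi\eta)^2$ primitive, which fails at $p=2$ (Remark \ref{rem.non-primitive}); for even $N$ the paper leaves $J_\eta$ unevaluated in Theorem \ref{th.explicit-KM-1}(2), yet the rationality statement you are proving silently requires $W(\bar\chi)J_\eta W(\widetilde\chi\eta)^3\in\QQ$ for \emph{all} $N$. Your Fourier-inversion argument --- computing $\widehat{S}(t)=\bigl(\sum_a\overline{\widetilde\chi\eta}(a)\bold e(at/N)\bigr)^3=\chi(t)W(\overline{\widetilde\chi\eta})^3$ and inverting, using only primitivity of $\widetilde\chi\eta$ and of $\chi=(\widetilde\chi\eta)^3$ --- proves the identity uniformly and thereby closes this gap; your preliminary observation that $\widetilde\chi\eta$ is automatically primitive (its cube $\chi$ is) is exactly the hypothesis that argument needs. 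One further point in your favour: running the computation through Theorem \ref{explicit-twisted-Koecher-Maass-first-kind} with the untwisted characters $\widetilde\chi\eta$ inside $K^{(2)}$, as you do, is what makes the product $W(\bar\chi)J_\eta W(\widetilde\chi\eta)^3$ collapse to $\widetilde\chi(-1)N^3$; taking the statement of Theorem \ref{th.explicit-KM-1} literally (with $\overline{\widetilde\chi\eta}$ inside the $L$-factors) would instead leave a factor $W(\overline{\widetilde\chi\eta})^6/W(\bar\chi)$, which is not rational --- so your choice of route is the consistent one.
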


\end{document}